\documentclass[a4paper,fleqn]{cas-sc}

\usepackage[authoryear,longnamesfirst]{natbib}

\usepackage{amsthm}          % \theoremstyle, \newtheorem
\usepackage{mathrsfs}        % \mathscr
\usepackage{caption}         % \captionof for non-floating figures

\theoremstyle{plain}
\newtheorem{theorem}{Theorem}[section]
\newtheorem{lemma}[theorem]{Lemma}

\theoremstyle{definition}
\newtheorem{definition}[theorem]{Definition}
\newtheorem{example}[theorem]{Example}

\theoremstyle{remark}
\newtheorem{remark}[theorem]{Remark}

\newcommand{\dd}{\,\mathrm{d}}
\newcommand{\Lop}{\mathcal{F}}

\newcommand{\pdom}{\overline\Omega}

\newcommand{\intr}{\mathring}

\begin{document}
\let\WriteBookmarks\relax
\def\floatpagepagefraction{1}
\def\textpagefraction{.001}

% Short title
\shorttitle{Natural superconvergence points for spline finite elements}

% Short author
\shortauthors{P. Yang and Z. Zhang}

% Main title of the paper
\title [mode = title]{Natural superconvergence points and asymptotic expansions
       for spline finite elements in one dimension}

% Title footnote (funding acknowledgment)
\tnotemark[1]
\tnotetext[1]{This work was partially supported by the National Natural Science Foundation of China (Grant No.~12501537).}

% First author
%
% Options: Use if required
% eg: \author[1,3]{Author Name}[type=editor,
%       style=chinese,
%       auid=000,
%       bioid=1,
%       prefix=Sir,
%       orcid=0000-0000-0000-0000,
%       facebook=<facebook id>,
%       twitter=<twitter id>,
%       linkedin=<linkedin id>,
%       gplus=<gplus id>]

\author[1]{Peng Yang}
\cormark[1]
%\fnmark[1]
\ead{pyang@uestc.edu.cn}

\affiliation[1]{organization={University of Electronic Science and Technology of China},
	addressline={School of Mathematical Sciences}, 
	city={Chengdu},
%          citysep={}, % Uncomment if no comma needed between city and postcode
	postcode={611731}, 
	state={Sichuan},
	country={China}}

% Second author
\author[2]{Zhimin Zhang}
%\cormark[2]
\ead{ag7761@wayne.edu}

\affiliation[2]{organization={Wayne State University},
	addressline={Department of Mathematics}, 
	city={Detroit},
%          citysep={}, % Uncomment if no comma needed between city and postcode
	postcode={48202}, 
	state={Michigan},
	country={USA}}

% Corresponding author text
\cortext[1]{Corresponding author}

%% Footnote text
%\fntext[1]{}

% Here goes the abstract
\begin{abstract}
We study the natural superconvergence points and asymptotic expansions of one-dimensional
spline finite element approximations.  For a spline space of degree $k$ and  any smoothness $0\le\mu\le k-1$, we prove that the
$s$-th derivative of the error exhibits enhanced convergence of
order $O(h^{k+2-s})$ at points where $k-s$ is even, provided the
mesh is symmetric within a region of size $Ch|\ln h|$ around the
point.  This condition is known to be optimal for the cases of low derivative
order $s=0,1$; the present
analysis shows that the same local condition is sufficient for all admissible
$s$. Moreover, by  expanding the error  in Legendre
polynomials,  a closure theorem determines the leading-order Legendre
coefficients (the asymptotic expansion of the error) by combining the Galerkin orthogonality
with the superconvergence conditions.  For
$\mu=k-1$ (B-splines)  and $\mu=k-2$, the Galerkin orthogonality conditions vanish and the
coefficients are determined solely by the superconvergence conditions.  The  asymptotic expansion can be expressed through a simple antiderivative
recurrence on Legendre polynomials. The resulting polynomial's zeros  encode the complete set of
superconvergence points for all derivative orders.
 Numerical experiments for selected $(k,\mu)$ pairs
confirm the theoretical predictions.
\end{abstract}

% Use if graphical abstract is present
%\begin{graphicalabstract}
%\includegraphics{}
%\end{graphicalabstract}

% Research highlights
\begin{highlights}
\item A unified theory of the natural superconvergence points of one-dimensional spline finite element spaces $S_h^{k,\mu}$ is developed for all derivative orders and smoothness levels.
\item A closure theorem determines the leading-order asymptotic expansion of the error by combining Galerkin orthogonality with nodal superconvergence conditions.
\item For maximal smoothness (B-splines), closed-form error polynomials obtained by an antiderivative recurrence encode all superconvergence points, with explicit asymptotic expansions.
\end{highlights}

% Keywords
% Each keyword is seperated by \sep
\begin{keywords}
superconvergence \sep
spline approximation \sep asymptotic expansion \sep high-order derivative \sep B-spline
\end{keywords}

\maketitle

% ===========================================================================
% 1. Introduction
% ===========================================================================
\section{Introduction}
\label{sec:intro}

Superconvergence is a well-known and intriguing phenomenon in finite
element analysis: while the global error is limited by the
polynomial degree and the mesh size, there exist specific points
within each element where the numerical solution or its derivatives
converge at a significantly higher rate than the global norm would
predict.  For standard $C^0$ Lagrange elements, the classical
theory, developed in the 1970s, established that the gradient is
superconvergent at the Gauss--Legendre points
\cite{DouglasDupont1973,LesaintZlamal1979} and the function
value at the Gauss--Lobatto points
\cite{Zlamal1978,DouglasDupontWheeler1974}.  These results have since
been extended to a wide variety of element types and problem
classes~\cite{Thomee1977,BrandtsKrizek2001,HeWM2017,Zhu1989,Chen1995,GuoZhangZou2018}.

In the past two decades, isogeometric analysis (IGA) has emerged
as a powerful paradigm that bridges computer-aided design and
finite element analysis~\cite{Hughes2005}.  The solution space in
IGA consists of spline spaces, that is, the spaces $S_h^{k,\mu}$
of degree $k$ and global smoothness $\mu$ studied in this paper.
Moreover, IGA commonly employs tensor-product grids in higher
dimensions, where the one-dimensional superconvergence structure is
carried over to each coordinate direction
\cite{Fahrendorf2018,WangQiLi2021}.  Consequently, a complete
understanding of the superconvergence points for one-dimensional
spline spaces serves as a foundation for multidimensional analysis.

A landmark reference on this subject is the monograph of
Wahlbin~\cite{Wahlbin1996}.  Inside a table of his monograph (summarized
in our Table~\ref{tab:wahlbin_summary}),
Wahlbin systematically summarized the superconvergence points for the
spline spaces $S_h^{k,\mu}$.
For each combination of degree $k$ and lower  smoothness $\mu$, he gave the
asymptotic expansion of the error on a reference element in terms
of Legendre polynomials, and identified the superconvergence
points as the zeros of the leading-order terms of these expansions.
An important aspect of Wahlbin's analysis is that the
superconvergence requires only a very mild local condition:
the mesh needs to be uniform only within a region of size
$Ch|\ln h|$ around the point in question.  This indicates that
superconvergence is a genuinely local phenomenon, as further
analyzed in~\cite{SchatzSloanWahlbin1996}.  What remains
absent from Wahlbin's table, however, is the superconvergence
behavior for higher-order derivatives (derivative order $s\ge2$) and for spline
spaces of high smoothness ($\mu\ge1$). 
\begin{table}[htbp]
	\centering
	\caption{Superconvergence points summarized in~\cite[Chapter~1]{Wahlbin1996}, where $L_k(t)$ is the Legendre polynomial of degree $k$.}
	\label{tab:wahlbin_summary}
	\small
	\begin{tabular}{p{0.28\textwidth}|p{0.33\textwidth}|p{0.33\textwidth}}
		$\mu: k$ & Function values (derivative order $s=0$) & First derivative (derivative order $s=1$) \\ \hline
		$\mu=0$: Any $k$, completely general meshes &
		$O(h^{2k})$ at meshpoints;
		$O(h^{k+2})$ at the interior zeros of $L'_k(t)$ &
		$O(h^{k+1})$ at zeros of $L_k(t)$ \\ \hline
		$\mu=1$: $k$ odd, meshes uniform in a $Ch|\ln h|$ neighborhood &
		$O(h^{k+2})$ at the $k-1$ zeros of $Q(t):= L_{k-1}(t)-\frac{L'_{k-1}(1)}{L'_{k+1}(1)} L_{k+1}(t)$ &
		$O(h^{k+1})$ at meshpoints and midpoints, and at additional $k-3$ zeros of $Q'(t)$ \\ \hline
		$\mu=1$: $k$ even, meshes as above &
		$O(h^{k+2})$ at meshpoints, and at the interior zeros of $L'_k(t)$ &
		$O(h^{k+1})$ at zeros of $L_k(t)$ \\ \hline
		$\mu=2$: $k=3$ (smoothest cubics), meshes as above &
		$O(h^{k+2})$ at two points, same as for Hermite cubics ($\mu=1,\;k=3$) above &
		$O(h^{k+1})$ at meshpoints and midpoints \\ \hline
		$\mu\ge1$: $k$ odd, general meshes symmetric in a $Ch|\ln h|$ neighborhood &
		Not known &
		$O(h^{k+1})$ at meshpoints and midpoints (incomplete) \\ \hline
		$\mu\ge1$: $k$ even, general meshes symmetric as above &
		$O(h^{k+2})$ at meshpoints and midpoints (incomplete) &
		Not known \\ \hline
	\end{tabular}
\end{table}  

Various subsequent works have made progress in related directions.
Babu\v{s}ka et al.~\cite{BabuskaStrouboulis1996} used computer-assisted
proof techniques to establish the existence of superconvergence
points and to reveal the distribution patterns of derivative
superconvergence.  Anitescu et al.~\cite{Anitescu2015}
conducted computational studies on B-spline based collocation
methods and reported superconvergence point distributions; this line of research was later extended in~\cite{Montardini2017, XuKangChenZhang2025}.  For a partial survey of the literature on natural
superconvergence points, see~\cite{BrandtsKrizek2001,Babuska1007,Lin2004,Lin2008,cao2023superconvergence,cao2022c}.
Nevertheless, a unified theoretical framework and closed-form
characterization of the superconvergence points for spline spaces with smoothness $\mu\ge1$ and derivative order
$s\ge2$ of the error has remained unavailable prior to the present work.

In this paper, we provide such a framework for the spline spaces
$S_h^{k,\mu}$ on quasi-uniform meshes.  Our first important result (Theorem~\ref{thm:local_super})
is a symmetry-based superconvergence estimate.  When $k-s$ is even,
the $s$-th derivative of the error is $O(h^{k+2-s})$ at points where
the mesh is locally uniform within a region of size $Ch|\ln h|$.
This covers both element midpoints (for $s\le k$) and meshpoints (for
$s\le\mu$).  We expand the error on each element in Legendre polynomials and apply the superconvergence estimate at the midpoints. This shows that  roughly half of the expansion coefficients are one order higher
than expected and can be discarded.  The remaining coefficients are then determined by
combining the Galerkin orthogonality with the nodal superconvergence
conditions, yielding the closure theorem (Theorem~\ref{thm:closure}) that
applies to all admissible $(k,\mu)$.

When $\mu=k-1$ (the B-spline case) or $\mu=k-2$, the Galerkin
orthogonality constraints are absent and the expansion coefficients
are determined by the superconvergence conditions alone.  As a
result, the leading-order polynomial of the $s$-th derivative has a particularly simple
closed form. It is obtained by applying an antiderivative
operator $k-s$ times  to the Legendre polynomial of degree one. The resulting
polynomial is of degree $k-s+1$, and  its zeros   have a simple structure: for even $k-s$ they are the
element endpoints and midpoint; for odd $k-s$ they are two symmetric
interior points $\pm a_{k-s}$.   Moreover, $a_{k-s}$ converges to
$1/2$ as $k-s\to\infty$, and we derive its precise asymptotic
expansion.  Using the closure theorem together with these explicit
results, we compute the leading-order polynomials (asymptotic expansions) for all
admissible $(k,\mu)$ pairs, thereby filling the entries left open in
Wahlbin's table (see also Table~\ref{tab:wahlbin_summary}) and completing the description of superconvergence points for all derivative orders.

The paper is organized as follows.  Section~\ref{sec:prelim} describes the model
problem and introduces the necessary tools.
Section~\ref{sec:super} presents the symmetry-induced superconvergence points.  Section~\ref{sec:asymptotic} develops
the asymptotic expansion on a reference element, including the closure
theorem and the explicit results for the maximal smoothness cases.
Section~\ref{sec:numerical} reports numerical experiments, and Section~\ref{sec:conclusion} concludes
the paper.  The Appendix supplies a key estimate used in the proof of the sharp local approximation error estimate.

Throughout this paper, $C$ denotes a generic positive constant independent of the mesh size $h$ (which may be arbitrarily small), and the value of $C$ may vary in different contexts.  In particular,
$A=O(h^m)$ means $|A|\le C h^m$.
% ===========================================================================
% 2. Preliminaries and problem setup
% ===========================================================================
\section{Preliminaries}
\label{sec:prelim}

% ---------------------------------------------------------------------------
% 2.1 Model problem and finite element discretisation
% ---------------------------------------------------------------------------
\subsection{Model problem and spline finite element discretisation}
\label{sec:model}

Consider the two-point boundary value problem with homogeneous Dirichlet
conditions: find $u$ such that
\begin{equation}\label{eq:model}
\left\{
\begin{aligned}
&-u''(x) = f(x), \quad x\in\Omega := (a,b),\\
&u(a)=u(b)=0,
\end{aligned}
\right.
\end{equation}
where $f$ is sufficiently smooth.  The variational formulation reads: find
$u\in H_0^1(\Omega)$ such that
$a(u,v) = (f,v)$ for all $v\in H_0^1(\Omega)$, where
$a(u,v)=\int_\Omega u'v'\,\dd x$ and $(\cdot,\cdot)$ denotes the
$L^2(\Omega)$ inner product.

Let $\{x_i\}_{i=1}^{N+1}$ be a quasi-uniform partition of $\pdom=[a,b]$ with
mesh size $h:=\max_i (x_{i+1}-x_i)$, and set $I_i=[x_i,x_{i+1}]$.
Define the polynomial spline space
\begin{equation*}
S_h^{k,\mu}=S_h^{k,\mu}(\Omega)
= \bigl\{\chi\in C^\mu(\Omega):\;
\chi|_{I_i}\in\mathcal{P}_k(I_i) \,\,\, \forall i \bigr\},
\end{equation*}
where $\mathcal{P}_k(I_i)$ denotes the space of polynomials of degree at
most $k$ on $I_i$.  The parameter $\mu\ge 0$ characterizes the global
smoothness: $\chi\in S_h^{k,\mu}$ possesses continuous derivatives up to
order $\mu$ throughout $\Omega$.  We assume $0\le\mu\le k-1$ for
non-trivial spline spaces.  Well-known examples include:
\begin{itemize}
  \item $\mu=0$: $C^0$ Lagrange elements;
  \item $\mu=1$, $k=3$: Hermite cubics;
  \item $\mu=k-1$: B-splines (maximal smoothness).
\end{itemize}

The Galerkin finite element solution $u_h\in S_h^{k,\mu}$ satisfies
\begin{equation}\label{eq:FEM}
a(u-u_h,\chi)=0\qquad\forall\,\chi\in V_h:=S_h^{k,\mu}\cap H_0^1(\Omega),
\end{equation}
with $u_h(a)=u_h(b)=0$, i.e., $u_h\in V_h$.
\begin{remark}
	We work with the simple Poisson model~\eqref{eq:model} for the analysis.
	By Wahlbin's  theorems~\cite[Chapter 1]{Wahlbin1996}, the
	difference $u_h - \tilde u_h$ between the finite element
	solution $ \tilde u_h$ of a general second-order problem (with variable
	coefficients and lower-order terms) and the solution $u_h$ of the
	simple model satisfies $\|(u_h - \tilde u_h)'\|_{L^\infty}=O(h^{k+1})$ and
	$\|u_h - \tilde u_h\|_{L^\infty}=O(h^{k+2})$.  For derivatives of order $s\ge2$,
	the inverse inequality gives $|(u_h - \tilde u_h)^{(s)}(x_0)|=O(h^{k+2-s})$
	for \emph{any} point $x_0$, provided $s\le\mu$ when $x_0$ is a mesh node,
	or $s\le k$ when $x_0$ lies in the interior of an element.
	Consequently, the entire superconvergence analysis extends  to
	general second-order problems, and we henceforth restrict attention
	to~\eqref{eq:model} without loss of generality.
\end{remark}

% ---------------------------------------------------------------------------
% 2.2 Discrete delta functions and exponential decay
% ---------------------------------------------------------------------------
\subsection{Discrete delta functions and exponential decay}
\label{sec:discrete_delta}

Let $x_0\in\Omega$ be a fixed point.  Consider $\mu\geq 1$.  For an integer $s$ with
$1\le s\le\mu$ when $x_0$ is a mesh point, and $1\le s\le k$ when $x_0$
lies inside an element, define the discrete delta function
$\delta_h=\delta_h^{s,x_0}\in S_h^{k-1,\mu-1}$ by
\begin{equation}\label{eq:discrete_delta}
(\chi,\delta_h)_{L^2(\Omega)} = \chi^{(s-1)}(x_0),\qquad
\forall\,\chi\in S_h^{k-1,\mu-1}.
\end{equation}
\begin{lemma}[Exponential decay]\label{lem:decay}
There exist constants $C,c>0$, independent of $h$ and
$x_0$, such that
\begin{equation}
|\delta_h(x)|\le C\,h^{-s}\,e^{-c|x-x_0|/h},\qquad
\forall x\in\Omega.
\label{eq:delta_decay}
\end{equation}
\end{lemma}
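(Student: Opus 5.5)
The plan is to treat $\delta_h$ as the $L^2$ Riesz representer of the functional $\chi\mapsto\chi^{(s-1)}(x_0)$ on $S_h^{k-1,\mu-1}$. Write $\delta_h=P_h g$, where $P_h$ is the $L^2(\Omega)$-orthogonal projection onto $S_h^{k-1,\mu-1}$ and $g$ is any smooth, compactly supported function that represents the functional on the spline space. The exponential decay will then come from the known exponential decay of the kernel of the $L^2$ projection onto splines on quasi-uniform meshes, i.e.\ of the inverse Gram matrix (Demko's theorem for banded, well-conditioned matrices; Descloux).

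\textbf{Step 1: a local representer.} Let $I^*$ be the element containing $x_0$; if $x_0$ is a node, take either adjacent element. Restricted to $I^*$, every $\chi\in S_h^{k-1,\mu-1}$ is a polynomial of degree $\le k-1$. By scaling from a reference element there is a unique $g\in\mathcal{P}_{k-1}(I^*)$, extended by zero outside $I^*$, such that $\int_{I^*}\chi g=\chi^{(s-1)}(x_0)$ for all such $\chi$. Scaling gives $\|g\|_{L^\infty}\le Ch^{-1-(s-1)}=Ch^{-s}$. The case restrictions are needed here. If $x_0$ is interior, $s-1\le k-1$, so the functional is nonzero on $\mathcal{P}_{k-1}$. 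If $x_0$ is a node, $s-1\le\mu-1$ ensures $\chi^{(s-1)}$ is continuous at $x_0$, so the functional is well defined and its value agrees from either side. Hence $(\chi,\delta_h)=(\chi,g)$ for all $\chi$, which gives $\delta_h=P_hg$.

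\textbf{Step 2: decay of $P_h$.} Expand $\delta_h=\sum_j c_jB_j$ in a locally supported, $L^\infty$-normalized basis of $S_h^{k-1,\mu-1}$, for instance B-splines with knot multiplicity $k-\mu$. Then $Mc=b$ with $M_{ij}=(B_i,B_j)$ and $b_i=(B_i,g)$. The matrix $M$ is banded, and by the $L^2$-stability of the B-spline basis on quasi-uniform meshes it satisfies $c_1h\,I\le M\le c_2h\,I$. Demko's lemma then gives $|(M^{-1})_{ij}|\le Ch^{-1}\gamma^{|i-j|}$ with $\gamma<1$ independent of $h$. Since $b_i\ne0$ only for the $O(1)$ indices whose supports meet $I^*$, and $|b_i|\le Ch\cdot h^{-s}$, we get $|c_j|\le Ch^{-s}\gamma^{|j-j_0|}$. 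Quasi-uniformity gives $|j-j_0|\ge c|x-x_0|/h-C$ for $x\in\operatorname{supp}B_j$. Summing over the $O(1)$ basis functions nonzero at $x$ yields $|\delta_h(x)|\le Ch^{-s}e^{-c|x-x_0|/h}$.

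\textbf{Main obstacle.} The delicate point is the uniform (in $h$ and in mesh) two-sided spectral bound on the Gram matrix, i.e.\ the $L^2$-stability of the basis of $S_h^{k-1,\mu-1}$ with multiple knots. This follows from de Boor's stability theorem for B-splines, whose condition number depends only on the degree. The $h$-independence of $\gamma$ follows because the condition number of $h^{-1}M$ is bounded. Boundary elements require no special treatment, since $\Omega$ is bounded and the inequality is only claimed inside $\Omega$.
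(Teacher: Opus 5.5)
Your proof is correct and follows essentially the same route as the paper: expand $\delta_h$ in the B-spline basis of $S_h^{k-1,\mu-1}$, use that the (suitably scaled) mass matrix is banded and uniformly well conditioned so Demko--Moss--Smith gives exponential decay of its inverse, note that the right-hand side has only $O(1)$ nonzero entries of the right size in $h$, and sum over the $O(1)$ basis functions active at $x$. The local representer $g$ is only a repackaging, since $(B_i,g)=B_i^{(s-1)}(x_0)$ is exactly the paper's right-hand side $f_j$ up to the $h^{1/2}$ normalization; the paper bounds that entry directly by an inverse inequality, while your scaling argument for $g$ also makes explicit why $s\le\mu$ is needed at nodes.
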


\begin{proof}
We analyze the linear system that determines the coefficients of $\delta_h$.
Let $\{\psi_j\}$ be the B-spline basis of $S_h^{k-1,\mu-1}$; the support of each $\psi_j$
 has width $O(h)$.  Since
$\|\psi_j\|_{L^2(\Omega)}=O(h^{1/2})$, introduce the normalized basis
\[
\tilde\psi_j:=h^{-1/2}\psi_j,\qquad\|\tilde\psi_j\|_{L^2(\Omega)}=1.
\]
The mass matrix $M_{ij}:=(\tilde\psi_i,\tilde\psi_j)$ is symmetric, positive
definite, banded with bandwidth independent of $h$, and, by quasi-uniformity
of the mesh, satisfies the condition number  $\kappa(M)=O(1)$.
Then, the Demko--Moss--Smith theorem~\cite{demko1984decay} supplies a
constant $0<\rho<1$, depending only on the bandwidth and
condition number, such that
\begin{equation}\label{eq,invM}
	|(M^{-1})_{ij}|\le C\,\rho^{|i-j|}\qquad\forall\,i,j.
\end{equation}

Write $\delta_h(x)=\sum_i d_i\tilde\psi_i(x)$.  Applying~\eqref{eq:discrete_delta}
with $\chi=\tilde\psi_j$ gives $M\mathbf{d}=\mathbf{f}$, where
$f_j:=\tilde\psi_j^{(s-1)}(x_0)$.
Only the $O(1)$ indices $j$ whose supports contain $x_0$ contribute a
non-zero $f_j$; denote this index set by
$J_0:=\{j:\operatorname{supp}\tilde\psi_j\ni x_0\}$.  For $j\in J_0$,
the inverse inequality for polynomials yields
\[
|f_j|
\le C\,\|\tilde\psi_j^{(s-1)}\|_{L^\infty(\Omega)}
\le C\,h^{-(s-1)}\|\tilde\psi_j\|_{L^\infty(\Omega)}
\le C\,h^{-(s-1)}h^{-1/2}=C\,h^{-s+1/2}.
\]
From $\mathbf{d}=M^{-1}\mathbf{f}$ and \eqref{eq,invM},
\[
|d_i|\le\sum_{j\in J_0}|(M^{-1})_{ij}|\,|f_j|
\le C\sum_{j\in J_0}\rho^{|i-j|}\,\,h^{-s+1/2}
\le C\,h^{-s+1/2}\,\rho^{|i-i_0|},
\]
where $i_0$ is an index in $J_0$ that minimizes $|i-i_0|$.  Quasi-uniformity of the mesh gives
$|i-i_0|\ge C_i|x_i-x_0|/h$ for some constant $C_i>0$, and therefore
\begin{equation}\label{eq,di}
	|d_i|\le C\,h^{-s+1/2}\,e^{-C_i|\ln\rho||x_i-x_0|/h}.
\end{equation}

For any $x\in\Omega$, let $I(x):=\{i:\operatorname{supp}\tilde\psi_i\ni x\}$.
Because each $\operatorname{supp}\tilde\psi_i$ has width $O(h)$, we have
$|I(x)|=O(1)$.  
Using \eqref{eq,di} and noting that for $i\in I(x)$ we have
$|x_i-x_0|\ge|x-x_0|-Ch$ (since $|x_i-x|\le Ch$), we obtain
\[
|\delta_h(x)|
\le\sum_{i\in I(x)}|d_i|\,\|\tilde\psi_i\|_{L^\infty}
\le\sum_{i\in I(x)} C\,h^{-s+1/2}\,e^{-c_i|\ln\rho||x_i-x_0|/h}h^{-1/2}
\le C\,h^{-s}\,e^{-c|x-x_0|/h}.
\]
  This completes the proof.
\end{proof}

\begin{remark}\label{rem:demko_condition}
The Demko--Moss--Smith theorem guarantees $h$-independent exponential
decay of the inverse entries for a symmetric positive definite banded
matrix whose condition number is bounded independent of $h$.
The mass matrix $M$ satisfies this requirement because the condition number $\kappa(M)=O(1)$.

The global stiffness matrix $A$ with entries
$A_{ij}:=(\psi_i',\psi_j')$, on the other hand, has condition
number $\kappa(A)=O(h^{-2})$.  For such matrices the Demko--Moss--Smith
theorem still applies, but the decay factor satisfies $\rho=1-O(h)$, so
that $h^{s}|\delta_h(x)| \leq C\,\rho^{c|\ln h|}\to O(1)$ as $h\to0$ for $|x-x_0|= O(h|\ln h|)$, yielding no
effective exponential decay.  Consequently, a direct stiffness-matrix
approach cannot match the exponential decay that the mass-matrix-based
discrete delta construction provides. 
\end{remark}
% ---------------------------------------------------------------------------
% 2.3 Localised error decomposition
% ---------------------------------------------------------------------------
\subsection{Localised error decomposition}
\label{sec:decomposition}

Let $B_d(x_0)\subset\Omega$ be a subdomain centered at $x_0$, with diameter
determined by
\begin{equation}\label{eq:d_choice}
d=C^* h|\ln h|,
\end{equation}
where $C^*>0$ is a sufficiently large constant independent of $h$ and $x_0$.
The domain $B_d(x_0)$ is taken to be a union of whole elements.  Let $N_b$
denote the set of mesh nodes on $\partial B_d$.  Define the subspace
\[
\intr S_h^{k,\mu}(B_d)=\bigl\{\chi\in S_h^{k,\mu}(B_d):\;
\chi^{(j)}(y)=0\;\forall y\in N_b,\; j=0,1,\dots,\mu\bigr\},
\]
consisting of spline functions whose values and all derivatives up to order
$\mu$ vanish at the boundary nodes.  Generally, we have
\[
\dim S_h^{k,\mu}(B_d)-\dim\intr S_h^{k,\mu}(B_d)=2(\mu+1).
\]
Define the local spline projection $\Pi_{B_d}u\in S_h^{k,\mu}(B_d)$ by
\begin{equation}\label{eq:local_projection}
\begin{cases}
a_{B_d}(u-\Pi_{B_d}u,\chi)=0, & \forall\,\chi\in\intr S_h^{k,\mu}(B_d),\\[2mm]
(\Pi_{B_d}u)^{(j)}(y)=u^{(j)}(y), & \forall y\in N_b,\; j=0,1,\dots,\mu.
\end{cases}
\end{equation}
The system~\eqref{eq:local_projection} is uniquely solvable: the bilinear
form $a_{B_d}(\cdot,\cdot)$ is coercive on the subspace with vanishing
boundary data, and the boundary conditions determine the remaining degrees
of freedom.

For the error $e:=u-u_h$, we decompose the pointwise derivative at $x_0$ as
\begin{equation}
e^{(s)}(x_0)=(\Pi_{B_d}u-u_h)^{(s)}(x_0)+(u-\Pi_{B_d}u)^{(s)}(x_0).
\label{eq:decomposition}
\end{equation}
Here, $s\le\mu$ when $x_0$ is a mesh node, and $s\le k$ when $x_0$ lies in the interior of an element.
The first term is the projection difference and the second
term is the local approximation error; both are analyzed in
Section~\ref{sec:super}.

% ===========================================================================
% 3. Symmetry-induced superconvergence points
% ===========================================================================
\section{Symmetry-induced superconvergence points}
\label{sec:super}

% ---------------------------------------------------------------------------
% 3.1 Estimation of the projection difference for s >= 2
% ---------------------------------------------------------------------------
\subsection{Estimation of the projection difference for $s\ge 2$}
\label{sec:proj_diff}

In this subsection, we estimate the first term $(\Pi_{B_d}u-u_h)^{(s)}(x_0)$ in~\eqref{eq:decomposition} for $s\ge 2$ using the discrete delta functions $\delta_h$ defined by \eqref{eq:discrete_delta}. 

First, we show a truncation error of the discrete delta functions $\delta_h$.
Let $\{\psi_i\}$ be the B-spline basis of $S_h^{k-1,\mu-1}$ and introduce the normalized basis
$\tilde\psi_j:=h^{-1/2}\psi_j$. Write $\delta_h(x)=\sum_i d_i\tilde\psi_i(x)$.  Let $J_d=\{i:\operatorname{supp}(\tilde\psi_i)\subset B_d\}$ and define the truncation
\begin{equation}
	T_h\delta_h:=\sum_{i\in J_d} d_i\tilde\psi_i,
	\label{eq:Tdelta}
\end{equation}
which evidently belongs to $\mathring{S}_h^{k-1,\mu-1}(B_d)$.

\begin{lemma}[Truncation error]\label{lem:trunc}
For any $M>0$, 
\begin{equation*}\label{eq:trunc_err,L1}
	\|\delta_h-T_h\delta_h\|_{L^1(\Omega)}+
	\|\delta_h-T_h\delta_h\|_{L^2(\Omega)}=O(h^M),
\end{equation*}
provided $d= C^* h|\ln h|$ as in~\eqref{eq:d_choice} with $C^*$ sufficiently large.
\end{lemma}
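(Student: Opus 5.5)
We estimate the discarded tail $\delta_h-T_h\delta_h=\sum_{i\notin J_d} d_i\tilde\psi_i$ directly, using the coefficient bound \eqref{eq,di} from the proof of Lemma~\ref{lem:decay} rather than the pointwise bound \eqref{eq:delta_decay}. The pointwise bound alone would give the result only away from $x_0$. The coefficient bound shows that every basis function left out of $T_h\delta_h$ carries an exponentially small coefficient.

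\textbf{Step 1: the tail lives far from $x_0$.} If $i\notin J_d$, then $\operatorname{supp}\tilde\psi_i\not\subset B_d(x_0)$. Since the support has width at most $Ch$ and $B_d$ has radius comparable to $d/2$, we get
\[
|x_i-x_0|\ge d/2 - C_1h,
\]
where $x_i$ is the reference node attached to $\psi_i$ in \eqref{eq,di}. Inserting this into \eqref{eq,di} yields
\[
|d_i|\le C h^{-s+1/2} e^{-c(d/2-C_1h)/h}= C' h^{-s+1/2} h^{cC^*/2},
\]
with $c:=C_i|\ln\rho|>0$ independent of $h$, because $e^{-c d/(2h)}=h^{cC^*/2}$ for $d=C^*h|\ln h|$.

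\textbf{Step 2: sum the tail.} The number of basis functions is $O(h^{-1})$. Each normalized function satisfies $\|\tilde\psi_i\|_{L^2}=1$ and, since $\|\tilde\psi_i\|_{L^\infty}\le Ch^{-1/2}$ on a support of width $O(h)$, also $\|\tilde\psi_i\|_{L^1}\le Ch^{1/2}$. The triangle inequality therefore gives
\[
\|\delta_h-T_h\delta_h\|_{L^1}+\|\delta_h-T_h\delta_h\|_{L^2}\le C h^{-1}\, h^{-s+1/2}\, h^{cC^*/2}.
\]
Given $M$, choose $C^*$ so large that $cC^*/2-s-1/2\ge M$. Since $s\le k$, this choice is uniform in $s$, and the lemma follows.

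\textbf{Main obstacle.} The delicate point is to make Step 1 uniform. The "node" $x_i$ in \eqref{eq,di} must be tied to the support of $\psi_i$ with an $h$-independent offset, and the constant $c$ must not depend on $h$ or $x_0$. Both facts follow from quasi-uniformity and from the index-to-distance relation $|i-i_0|\ge C_i|x_i-x_0|/h$ used in Lemma~\ref{lem:decay}.

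A second possible issue arises when $B_d$ meets $\partial\Omega$, so that $B_d$ is truncated. In that case all indices lying outside $B_d$ on that side are simply absent, and the argument applies unchanged to the remaining side.
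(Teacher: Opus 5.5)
Your proof is correct, and it reaches the result by a slightly different decomposition than the paper.

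\textbf{The paper's route.} The paper splits the error by region. On $\Omega\setminus B_d$ it uses that $\delta_h-T_h\delta_h=\delta_h$ there, and integrates the pointwise decay \eqref{eq:delta_decay} over $|x-x_0|\ge d$ to get an exponential tail. Inside $B_d$, only the $O(1)$ discarded basis functions straddling $\partial B_d$ remain, and these are bounded with the coefficient estimate \eqref{eq,di}.

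\textbf{Your route.} You make no regional split. You apply \eqref{eq,di} to every discarded index. You then sum crudely over all $O(h^{-1})$ of them with the triangle inequality, using $\|\tilde\psi_i\|_{L^2}=1$ and $\|\tilde\psi_i\|_{L^1}\le Ch^{1/2}$.

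\textbf{What each buys.} Your version is more uniform. It relies only on the coefficient decay, from which \eqref{eq:delta_decay} is itself derived. It avoids the tail integral and treats the $L^1$ and $L^2$ norms simultaneously, rather than leaving $L^2$ ``analogous''. The cost is a factor $h^{-1}$ from counting indices, which is harmless because it is absorbed by enlarging $C^*$. The paper's split tracks powers of $h$ a little more tightly. It still cannot avoid \eqref{eq,di} for the straddling functions, since a partial B-spline sum is not directly controlled by the pointwise bound on $\delta_h$.

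\textbf{The point you flagged.} The uniformity issue you raise is real: the constant in $|i-i_0|\ge C|x_i-x_0|/h$ must be $h$-independent, possibly up to an additive $O(1)$, and $x_i$ must lie within $O(h)$ of $\operatorname{supp}\psi_i$. The paper takes this for granted from quasi-uniformity in exactly the same way.
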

\begin{proof}
We estimate
\[
\|\delta_h-T_h\delta_h\|_{L^1(\Omega)} 
= \int_{\Omega}\Bigl|\sum_{i\notin J_d}d_i\tilde\psi_i\Bigr|
\le \|\delta_h\|_{L^1(\Omega\setminus B_{d})} 
+ \int_{B_{d}} \sum_{i\notin J_{d}}|d_i|\,|\tilde\psi_i|.
\]
The number of indices $i\notin J_d$ with $\operatorname{supp}(\tilde\psi_i)\cap B_{d}\neq\varnothing$ is $O(1)$.  For any such $i$, every point $x\in\operatorname{supp}(\tilde\psi_i)\cap B_d$ satisfies $|x-x_0|\ge d-Ch = Cd$, i.e., at a distance $Ch|\ln h|$ from $x_0$.  By \eqref{eq,di} and $\|\tilde\psi_i\|_{L^\infty}\le C h^{-1/2}$, we obtain
\[
\int_{B_{d}} \sum_{i\notin J_{d}}|d_i|\,|\tilde\psi_i| \le C h^{-s} e^{-c C^*|\ln h|} = O(h^M),
\]
provided $C^*= (M+s)/c$.
On the other hand, it follows from \eqref{eq:delta_decay} that
\begin{align*}
	\|\delta_h\|_{L^1(\Omega\setminus B_{d})} &\le \int_{|x-x_0|\ge C^* h|\ln h|} C h^{-s} e^{-c|x-x_0|/h}\,dx \\
	&\le C h^{-s}\int_{y\ge C^* |\ln h|}  h e^{-cy}\,dy \\
	&= C\,h^{-s+1} \frac{1}{c }h^{cC^*}\\
	&=O(h^{M+1}).
\end{align*}
The $L^2$ estimate follows analogously. This completes the proof.
\end{proof}

Let the mesh nodes on $\partial B_d$ be taken as endpoints of macro elements~\cite{Yang2025}.  By the element-based B-spline construction of~\cite{Yang2025}, there exists an interpolant $I_h^{\text{spl}}u\in S_h^{k,\mu}(\Omega)$ that matches $u$ and its derivatives up to order $\mu$ at these nodes and satisfies
\begin{equation*}\label{eq,spline interpolant}
	\|u-I_h^{\text{spl}}u\|_{H^1(\Omega)}\leq Ch^k.
\end{equation*}
Extend $\Pi_{B_d}u$ to $\Omega$ by
\[
\Pi^*_{\Omega}u(x):=
\begin{cases}
\Pi_{B_d}u(x), & x\in B_d,\\[2mm]
I_h^{\text{spl}}u(x), & x\in\Omega\setminus B_d.
\end{cases}
\]
On $\partial B_d$, both $\Pi_{B_d}u$ and $I_h^{\text{spl}}u$ coincide with $u$ in value and in derivatives up to order $\mu$ (by~\eqref{eq:local_projection} and the matching property of $I_h^{\text{spl}}u$), consequently $\Pi^*_{\Omega}u \in S_h^{k,\mu}(\Omega)$.  We now estimate the projection difference.

\begin{lemma}[Estimation of the projection difference]\label{lem:proj_diff_sge2}
Let $s\geq 2$.  Then for any $M>0$,
\[
(\Pi_{B_d}u-u_h)^{(s)}(x_0)=O(h^M),
\]
provided $d= C^* h|\ln h|$ as in~\eqref{eq:d_choice} with $C^*$ sufficiently large.  Here, $s\le\mu$ when $x_0$ is a mesh node, and $s\le k$ when $x_0$ lies in the interior of an element.
\end{lemma}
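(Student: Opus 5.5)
Set $\eta:=\Pi^*_{\Omega}u-u_h\in S_h^{k,\mu}(\Omega)$. On $B_d$ it agrees with $\Pi_{B_d}u-u_h$, so it suffices to bound $\eta^{(s)}(x_0)$. Since $s\ge2$, the derivative $\eta'$ lies in $S_h^{k-1,\mu-1}$. The definition~\eqref{eq:discrete_delta} with $\chi=\eta'$ then gives
\[
\eta^{(s)}(x_0)=(\eta',\delta_h)=(\eta',T_h\delta_h)+(\eta',\delta_h-T_h\delta_h).
\]
The admissible ranges of $s$ match: $s\le\mu$ at nodes and $s\le k$ inside elements correspond to $s-1\le\mu-1$ and $s-1\le k-1$ for the space $S_h^{k-1,\mu-1}$.

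\textbf{Tail term.} We bound it by Cauchy--Schwarz:
\[
|(\eta',\delta_h-T_h\delta_h)|\le\|\eta'\|_{L^2(\Omega)}\,\|\delta_h-T_h\delta_h\|_{L^2(\Omega)}.
\]
By Lemma~\ref{lem:trunc}, the second factor is $O(h^{M'})$ for any $M'$. For the first factor we use $\|\eta'\|\le\|(u-u_h)'\|+\|(u-\Pi^*_\Omega u)'\|$. The Galerkin error is $O(h^k)$. Outside $B_d$, the interpolant estimate gives $\|(u-I_h^{\rm spl}u)'\|=O(h^k)$. On $B_d$, the local projection satisfies a stability/quasi-optimality bound in $H^1(B_d)$: comparing with $I_h^{\rm spl}u$, which has the same boundary data, we get $\|(u-\Pi_{B_d}u)'\|_{L^2(B_d)}\le C\|(u-I_h^{\rm spl}u)'\|_{L^2(B_d)}$, because $\Pi_{B_d}u$ is the $a_{B_d}$-best approximation within that affine class. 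Hence $\|\eta'\|=O(1)$ at worst, and the tail term is $O(h^{M})$ once $M'$ is chosen large enough.

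\textbf{Main term.} We want $(\eta',T_h\delta_h)=0$. Since $T_h\delta_h\in\intr S_h^{k-1,\mu-1}(B_d)$, set $\phi(x):=\int_{x_L}^x T_h\delta_h$, where $x_L$ is the left endpoint of $B_d$. Then $\phi\in S_h^{k,\mu}(B_d)$, and $\phi$ together with its derivatives up to order $\mu$ vanish at $x_L$. The derivatives of order $1,\dots,\mu$ also vanish at the right endpoint. The key obstacle is $\phi(x_R)=\int_{B_d}T_h\delta_h$, which need not be zero.

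To handle this, we use that constants lie in $S_h^{k-1,\mu-1}$. Testing~\eqref{eq:discrete_delta} with $\chi=1$ gives $(1,\delta_h)=0$ when $s\ge2$. Therefore
\[
\int T_h\delta_h=-\int(\delta_h-T_h\delta_h)=O(h^M)
\]
by the $L^1$ part of Lemma~\ref{lem:trunc}. We then correct $\phi$ by subtracting $\phi(x_R)\,\omega$, where $\omega\in S_h^{k,\mu}(B_d)$ is a fixed spline with $\omega(x_R)=1$, whose other boundary derivatives vanish and whose support is the last $O(1)$ elements, with $\|\omega'\|_{L^2}\le Ch^{-1/2}$. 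The result $\tilde\phi$ lies in $\intr S_h^{k,\mu}(B_d)$. Moreover, $\tilde\phi$ vanishes near $\partial\Omega$ and belongs to $V_h$ when extended by zero.

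We then evaluate
\[
(\eta',\phi')=(\eta',\tilde\phi')+\phi(x_R)(\eta',\omega').
\]
For the first term, $(\eta',\tilde\phi')=a(\Pi^*u-u,\tilde\phi)+a(u-u_h,\tilde\phi)=0$: the first summand vanishes by~\eqref{eq:local_projection} and the second by~\eqref{eq:FEM}. The second term is $O(h^M)\cdot O(h^{-1/2})\cdot O(1)$. Choosing $C^*$ large absorbs the negative powers of $h$, which concludes the estimate.

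I expect the main difficulty to be the boundary-mass correction $\phi(x_R)$, together with verifying the $O(1)$ bound on $\|\eta'\|$ via stability of $\Pi_{B_d}$.
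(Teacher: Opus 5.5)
Your proposal is correct and follows essentially the paper's argument: you split $\delta_h$ into $T_h\delta_h$ plus a tail controlled by Lemma~\ref{lem:trunc}, use $\int_\Omega\delta_h=0$ (valid for $s\ge2$) to show the mass of $T_h\delta_h$ is $O(h^M)$, and remove that mass with a fixed unit-mass correction so that \eqref{eq:FEM} and \eqref{eq:local_projection} annihilate the main term. The differences are cosmetic. You correct the primitive by $\phi(x_R)\,\omega$ near $x_R$, so $\omega'$ plays the role of the paper's normalized B-spline $\psi_0$ near $x_0$. You also justify $\|\eta'\|_{L^2}=O(h^k)$ on $B_d$ explicitly, via the energy best-approximation property of $\Pi_{B_d}u$ against $I_h^{\mathrm{spl}}u$, which the paper merely asserts.
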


\begin{proof}
Since $x_0\in B_d$, we have $(\Pi_{B_d}u-u_h)^{(s)}(x_0)=(\Pi^*_{\Omega}u-u_h)^{(s)}(x_0)$ by construction.  Set $w:=(\Pi^*_{\Omega}u-u_h)'\in S_h^{k-1,\mu-1}(\Omega)$.  Then, by \eqref{eq:delta_decay}, we have
\[
(\Pi_{B_d}u-u_h)^{(s)}(x_0)=w^{(s-1)}(x_0)=(\delta_h,w).
\]
In addition, it follows from the global and  local spline projections \eqref{eq:FEM}, \eqref{eq:local_projection} that
\begin{equation}\label{eq: orthogonality,w}
	(\chi',w) = 0 \quad \forall\,\chi\in\intr S_h^{k,\mu}(B_d).
\end{equation}
We would like to combine this orthogonality with the representation $(\delta_h,w)$ by choosing $\chi$ such that $\chi'$ approximates $\delta_h$ on $B_d$.  The truncated delta function $T_h\delta_h$ defined in \eqref{eq:Tdelta} belongs to $\mathring{S}_h^{k-1,\mu-1}(B_d)$, but it is not necessarily of the form $\chi'$ with $\chi\in\mathring{S}_h^{k,\mu}(B_d)$.  Consequently, we cannot directly exploit the orthogonality $(\chi',w)=0$ with $T_h\delta_h$ as a test function.  Instead, we construct a corrected truncated delta function $\widetilde\delta_h$ that differs from $T_h\delta_h$ by an $O(h^M)$ perturbation and belongs to the admissible space $W_h:=\{\chi': \chi\in \mathring{S}_h^{k,\mu}(B_d)\}$.
To this end, define 
\[
m:=\int_{B_d}T_h\delta_h\,dx = \int_{\Omega}T_h\delta_h\,dx.
\]
 By \eqref{eq:discrete_delta} with $\chi=1$, the discrete delta function satisfies $\int_{\Omega}\delta_h = 1^{(s-1)}(x_0)=0$ for $s\ge 2$.  Hence, by Lemma~\ref{lem:trunc},
\begin{equation}\label{eq: arbitray low order of m}
	|m| = \Bigl|\int_{\Omega}(T_h\delta_h-\delta_h)\Bigr|
	\le \|\delta_h-T_h\delta_h\|_{L^1(\Omega)} = O( h^{M}).
\end{equation}
Let $\psi_{i_0}$ be a B-spline of $S_h^{k-1,\mu-1}$ such that $x_0\in\operatorname{supp}(\psi_{i_0})$ and define
\begin{equation*}
	\psi_0(x):=\Bigl(\int \psi_{i_0}\Bigr)^{-1} \psi_{i_0}(x),
\end{equation*}
normalized so that $\int\psi_0=1$.  Since $\operatorname{supp}(\psi_{i_0})\subset B_d$ for sufficiently large $C^*$ (because $\operatorname{dist}(x_0,\partial B_d)=d\gg h$), we have $\psi_0\in\mathring{S}_h^{k-1,\mu-1}(B_d)$.
Construct the corrected truncated delta function
\begin{equation*}
	\widetilde\delta_h:=T_h\delta_h - m\psi_0.
\end{equation*}
Evidently $\widetilde\delta_h$ differs from $T_h\delta_h$ only by the $O(h^M)$ perturbation $m\psi_0$, and $\int_{B_d}\widetilde\delta_h = m - m\int\psi_0 = 0$.
Moreover $\widetilde\delta_h\in\mathring{S}_h^{k-1,\mu-1}(B_d)$ because both $T_h\delta_h$ and $\psi_0$
belong to that space.  Let $\chi(x)=\int_{x_L}^{x}\widetilde\delta_h(t)\,\dd t$, where
$x_L$ is the left endpoint of $B_d$.  Since $\widetilde\delta_h$ vanishes at
$\partial B_d$ together with its derivatives up to order $\mu-1$ and has zero
integral over $B_d$, the primitive $\chi$ belongs to $\mathring{S}_h^{k,\mu}(B_d)$
and satisfies $\chi'=\widetilde\delta_h$.  Hence $\widetilde\delta_h\in W_h$.

Now, using $T_h\delta_h = \widetilde\delta_h + m\psi_0$, write
\[
(\delta_h,w) = (T_h\delta_h,w) + (\delta_h-T_h\delta_h,w)
= (m\psi_0,w) + (\widetilde\delta_h,w) + (\delta_h-T_h\delta_h,w).
\]
By \eqref{eq: orthogonality,w} and the fact $\widetilde\delta_h\in W_h$,  we have 
\[
(\widetilde\delta_h,w) = 0.
\] 
Consequently,
\[
(\Pi_{B_d}u-u_h)^{(s)}(x_0) = (m\psi_0,w) + (\delta_h-T_h\delta_h,w).
\]
Both terms are of arbitrarily high order:
\begin{align*}
|(m\psi_0,w)| &\le |m|\,\|\psi_0\|_{L^2(\Omega)}\,\|w\|_{L^2(\Omega)} \le C h^{M+k-1/2},\\
|(\delta_h-T_h\delta_h,w)| &\le \|\delta_h-T_h\delta_h\|_{L^2(\Omega)}\,\|w\|_{L^2(\Omega)} \le C h^{M+k},
\end{align*}
where we have used \eqref{eq: arbitray low order of m}, Lemma~\ref{lem:trunc} and the a priori estimate 
\[
\|w\|_{L^2(\Omega)}\leq |u-u_h|_{H^1(\Omega)}+|u- \Pi^*_{\Omega}u|_{H^1(\Omega)}\leq Ch^k. 
\]
This completes the proof.
\end{proof}

\begin{remark}
The proof for $s\ge2$ crucially relies on the fact that $\int_\Omega\delta_h = 0$, which forces $m = O(h^M)$ and thereby renders both terms in the final estimate arbitrarily small.  For $s=1$, however, $\int_\Omega\delta_h = 1$ (by \eqref{eq:discrete_delta} with $\chi=1$), so $m = 1+O(h^M)=O(1)$ is not small.  Consequently, the argument above cannot establish a superconvergence
estimate for $s=1$.  In fact, $(\Pi_{B_d}u-u_h)'(x_0)=O(h^{k})$.  The lower-order case falls outside the scope of the present technique, and one instead follows the approach of Wahlbin~\cite[Chapter 1]{Wahlbin1996}, who analyzes $(u-u_h)^{(s)}(x_0)$ directly for $s=0,1$.
\end{remark}

% ---------------------------------------------------------------------------
% 3.2 Local approximation error
% ---------------------------------------------------------------------------
\subsection{Local approximation error and symmetry-induced superconvergence points}
\label{sec:local_approx}

In this subsection, we estimate the second term $(u-\Pi_{B_d}u)^{(s)}(x_0)$ in~\eqref{eq:decomposition}. The discussion is restricted to the case where $ x_0 $ serves as the symmetric center of $ B_d $, namely,  $ x_0 $ is either a meshpoint or a midpoint of the  mesh elements. In the end of this subsection, we combine the estimation of two terms  in~\eqref{eq:decomposition} to present the symmetry-induced superconvergence points. 

Define the reflection operator $ T $ such that 
\begin{equation*}
	Tf(x) = f(2x_0-x).
\end{equation*}
Let $ \{\chi_j\}_{j=1}^{N_0} $ be the set of basis functions in $ S_{h}^{k,\mu} $ whose support has nonempty intersection with $ B_d $. We require that $ x_0 $ be the symmetric center of a slightly larger region $ B_{d^*} $, where $ d^* $ is defined as the union of the supports of all elements in $ \{\chi_j\}_{j=1}^{N_0} $. Without confusion, let $ d^*=d $ since $ d^*= d+Ch\sim d $.
Provided $B_d(x_0)$ is symmetric about $x_0$, the reflection $T$ maps the spline space $S_h^{k,\mu}(B_d)$
onto itself.  
\begin{lemma}\label{lem:parity_local}
Assume $B_d(x_0)$ is symmetric about $x_0$.  If $u$ is an even (resp. odd)
function with respect to $x_0$, then the local spline projection
$\Pi_{B_d}u\in S_h^{k,\mu}(B_d)$ is also even (resp. odd) with respect
to $x_0$.
\end{lemma}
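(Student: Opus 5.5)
The argument is a uniqueness argument: the reflected projection $T\Pi_{B_d}u$ satisfies the same defining conditions \eqref{eq:local_projection} as the projection of $Tu=\pm u$, and the system is uniquely solvable. Concretely, I would show that $T$ commutes with the projection, $\Pi_{B_d}(Tu)=T\,\Pi_{B_d}u$. Linearity of $\Pi_{B_d}$ then gives $\Pi_{B_d}u=\Pi_{B_d}(\pm Tu)=\pm T\Pi_{B_d}u$ whenever $Tu=\pm u$, which is exactly the claim.

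\textbf{Step 1: $T$ preserves the spaces.} Since $B_d(x_0)$ is symmetric about $x_0$, the mesh nodes in $B_d$ are mapped to mesh nodes, so $T$ maps each element of $B_d$ onto an element. Reflection preserves polynomial degree and $C^\mu$ smoothness, hence $T$ maps $S_h^{k,\mu}(B_d)$ onto itself, as already noted before the lemma. Moreover $(Tv)^{(j)}(x)=(-1)^j v^{(j)}(2x_0-x)$ and $T$ permutes the boundary node set $N_b$. Therefore the vanishing conditions at $N_b$ are preserved, and $T$ maps $\intr S_h^{k,\mu}(B_d)$ onto itself.

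\textbf{Step 2: $T\Pi_{B_d}u$ satisfies the defining conditions for $Tu$.} Take any $\chi\in\intr S_h^{k,\mu}(B_d)$. The substitution $x\mapsto 2x_0-x$ gives
\[
a_{B_d}(Tv,\chi)=\int_{B_d}(Tv)'\chi'\dd x=\int_{B_d}v'\,(T\chi)'\dd x=a_{B_d}(v,T\chi).
\]
The two sign factors $(-1)$ from differentiating each reflected factor cancel, and the domain $B_d$ is invariant under the substitution. Applying this with $v=u-\Pi_{B_d}u$ yields
\[
a_{B_d}(Tu-T\Pi_{B_d}u,\chi)=a_{B_d}(u-\Pi_{B_d}u,T\chi)=0,
\]
because $T\chi\in\intr S_h^{k,\mu}(B_d)$ by Step 1. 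For the boundary conditions, let $y\in N_b$. Then $2x_0-y\in N_b$ and
\[
(T\Pi_{B_d}u)^{(j)}(y)=(-1)^j(\Pi_{B_d}u)^{(j)}(2x_0-y)=(-1)^ju^{(j)}(2x_0-y)=(Tu)^{(j)}(y).
\]

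\textbf{Step 3: conclude by uniqueness.} By the unique solvability of \eqref{eq:local_projection}, Step 2 forces $T\Pi_{B_d}u=\Pi_{B_d}(Tu)$. The lemma then follows from the linearity argument given at the start.

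The only point requiring care is the invariance of the space and of $N_b$ under $T$, that is, that the symmetry hypothesis really makes the mesh inside $B_d$ (and its boundary nodes) reflection-invariant. That is precisely what the assumption provides. Everything else is routine bookkeeping of the $(-1)^j$ signs.
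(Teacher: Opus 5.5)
Your proposal is correct and follows the same uniqueness argument as the paper: the reflected projection satisfies the orthogonality condition (via $a_{B_d}(Tv,\chi)=a_{B_d}(v,T\chi)$ with $T\chi\in\intr S_h^{k,\mu}(B_d)$) and the boundary conditions, so it must coincide with the projection. The differences are cosmetic. You first prove the commutation $T\Pi_{B_d}=\Pi_{B_d}T$ for general $u$ and then use linearity, whereas the paper treats the even and odd cases directly. You also make explicit the invariance of $N_b$ and $\intr S_h^{k,\mu}(B_d)$ under $T$ and the $(-1)^j$ signs, which the paper leaves implicit.
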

\begin{proof}
First we have $T\Pi_{B_d}u\in S_h^{k,\mu}(B_d)$.  Suppose $u$ is even, so
$Tu=u$.  For any test function $\chi\in\intr S_h^{k,\mu}(B_d)$, the
bilinear form satisfies
	\begin{equation*}
	a_{B_d}(u-T\Pi_{B_d}u,\chi)= a_{B_d}(T(u-\Pi_{B_d}u),\chi) = a_{B_d}(u-\Pi_{B_d}u,T\chi)=0,
\end{equation*}
Moreover, $u-T\Pi_{B_d}u = T(u-\Pi_{B_d}u)$ vanishes together with its derivatives
up to order $\mu$ at $\partial B_d$ (since $u-\Pi_{B_d}u$ does).  Thus
$T\Pi_{B_d}u$ satisfies both the orthogonality and the boundary conditions
defining $\Pi_{B_d}u$, and by uniqueness $T\Pi_{B_d}u=\Pi_{B_d}u$, which  means $\Pi_{B_d}u$ is even.

If $u$ is odd, then $Tu=-u$.  A similar argument gives that $\Pi_{B_d}u$ is odd.
\end{proof}
Based on Lemma~\ref{lem:parity_local}, we are now ready to present the following superconvergence result of the local approximation error $(u-\Pi_{B_d}u)^{(s)}(x_0)$ for $s\geq 2$.
\begin{lemma}\label{lemma,2}
	Assume $ x_0 $ is  the symmetric center of  $ B_d(x_0)$ with diameter $ d=C^*h|\ln h| $. If $k-s$ is even and $s\ge 2$, then
	\begin{equation}\label{lemma, u-pihu}
		|(u-\Pi_{B_d}u)^{(s)}(x_0)| \leq C h^{k+2-s}|u|_{W^{k+2,\infty}(B_d)},
	\end{equation}
	where $ \Pi_{B_d}u $ is the local polynomial spline solution satisfying \eqref{eq:local_projection}, and the constant $C$ is independent of $h$ and $\ln h$.
	Here, $s\le\mu$ when $x_0$ is a mesh node, and $s\le k$ when $x_0$ lies in the interior of an element.
\end{lemma}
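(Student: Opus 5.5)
Our plan is to Taylor expand $u$ about $x_0$ and use the parity result of Lemma~\ref{lem:parity_local} to remove the leading error term. Write
\[
u = p_{k+1} + R,\qquad p_{k+1}(x)=\sum_{j=0}^{k+1}\frac{u^{(j)}(x_0)}{j!}(x-x_0)^j,\qquad |R(x)|\le C|x-x_0|^{k+2}|u|_{W^{k+2,\infty}(B_d)}.
\]
Since $\Pi_{B_d}$ reproduces polynomials of degree $\le k$ (they lie in $S_h^{k,\mu}(B_d)$, and uniqueness of \eqref{eq:local_projection} gives $\Pi_{B_d}q=q$), we have
\[
u-\Pi_{B_d}u=\tfrac{u^{(k+1)}(x_0)}{(k+1)!}\,(I-\Pi_{B_d})(x-x_0)^{k+1}+(I-\Pi_{B_d})R .
\]
The two terms are handled separately.

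\textbf{The leading term.} The monomial $(x-x_0)^{k+1}$ has the parity of $k+1$ about $x_0$. By Lemma~\ref{lem:parity_local}, its projection has the same parity, so $g:=(I-\Pi_{B_d})(x-x_0)^{k+1}$ is even or odd with parity $k+1$. Consequently $g^{(s)}$ has the parity of $k+1-s$. When $k-s$ is even, $k+1-s$ is odd, so $g^{(s)}$ is odd about $x_0$ and vanishes there. At a mesh node ($s\le\mu$) the value $g^{(s)}(x_0)$ is well defined since $g\in C^\mu$; at a midpoint $g$ is smooth near $x_0$. Hence the term of order $h^{k+1-s}$ disappears.

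\textbf{The remainder.} It remains to show
\[
|((I-\Pi_{B_d})R)^{(s)}(x_0)|\le Ch^{k+2-s}|u|_{W^{k+2,\infty}(B_d)}
\]
with $C$ independent of $\ln h$. This is the main obstacle. The crude bound $\|R\|_{L^\infty(B_d)}\sim d^{k+2}$ loses $|\ln h|^{k+2}$, and the stability constant of $\Pi_{B_d}$ on a domain of size $d$ could lose further powers of $d/h$.

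The plan is to prove a local, weighted stability estimate for $\Pi_{B_d}$: the influence of data at distance $\rho$ from $x_0$ on $(\Pi_{B_d}v)^{(s)}(x_0)$ decays like $e^{-c\rho/h}$. This is the content of the appendix's key estimate, and it should follow from the exponential decay of discrete Green's/delta functions (Lemma~\ref{lem:decay}), in the same spirit as Lemma~\ref{lem:proj_diff_sge2}.

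Concretely, we decompose $R$ into pieces supported on dyadic-like shells $\{jh\le|x-x_0|\le(j+1)h\}$, where $|R|\le C((j+1)h)^{k+2}$. Alternatively, we subtract a local quasi-interpolant $Q_hR$, which satisfies $\|R-Q_hR\|_{W^{s,\infty}(I)}\le Ch^{k+1-s}|R|_{W^{k+1,\infty}(I)}$ on each element $I$, and use $|R|_{W^{k+1,\infty}(I)}\le C\,\mathrm{dist}(I,x_0)\,|u|_{W^{k+2,\infty}}$ plus $h$. Each piece contributes at most
\[
Ch^{-s}((j+1)h)^{k+2}e^{-cj}\,|u|_{W^{k+2,\infty}(B_d)}\quad\text{or}\quad Ch^{k+1-s}(j+1)h\,e^{-cj}\,|u|_{W^{k+2,\infty}(B_d)}.
\]
Summing over $j$ gives $Ch^{k+2-s}\sum_j (j+1)^{k+2}e^{-cj}=Ch^{k+2-s}$, with no logarithmic factor. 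Boundary data at $\partial B_d$ is at distance $d=C^*h|\ln h|$, so its influence is $O(h^M)$ and can be ignored.

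If establishing the weighted stability directly is hard, we would fall back on the representation $(\Pi_{B_d}v)^{(s)}(x_0)=(\widetilde\delta_h,(\Pi_{B_d}v)')$. Galerkin orthogonality $(\widetilde\delta_h,(v-\Pi_{B_d}v)')=0$ converts this into $(\widetilde\delta_h,v')$ plus boundary corrections. Combined with \eqref{eq:delta_decay}, this yields the desired weighted bound.
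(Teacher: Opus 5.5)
Your proposal is correct and follows the paper's route. Taylor expansion plus the parity Lemma~\ref{lem:parity_local} removes the $(x-x_0)^{k+1}$ term, and the remainder is handled as in your fallback: represent $(\Pi_{B_d}R)^{(s)}(x_0)$ through a discrete delta, replace that delta by a zero-mean corrected one in $W_h$ so that orthogonality turns the pairing into $(R',\delta)$ up to $O(h^M)$, and then use exponential decay.

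The differences are minor. The paper builds a local delta $\delta_{B_d}\in\intr S_h^{k-1,\mu-1}(B_d)$ instead of reusing the corrected truncated $\widetilde\delta_h$ of Lemma~\ref{lem:proj_diff_sge2}. It also needs no shell or quasi-interpolant decomposition, because integrating $|R'(x)|\le C|x-x_0|^{k+1}|u|_{W^{k+2,\infty}(B_d)}$ against $Ch^{-s}e^{-c|x-x_0|/h}$ already gives $O(h^{k+2-s})$ with no logarithmic factor.
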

\begin{proof}
Let $p_{k+1}$ be the Taylor polynomial of $u$ of degree $k+1$ at $x_0$ and write
$u=q_k+\alpha g+r$, where $q_k\in\mathcal{P}_k$, $\alpha=u^{(k+1)}(x_0)/(k+1)!$,
$g(x)=(x-x_0)^{k+1}$, and $r$ is the Taylor remainder of order $k+2$, so
that $r^{(j)}(x_0)=0$ for $j=0,\dots,k+1$.  Since $\Pi_{B_d}$ reproduces
all polynomials of degree at most $k$,
\begin{equation*}
(u-\Pi_{B_d}u)^{(s)}(x_0)
=(r-\Pi_{B_d}r)^{(s)}(x_0)
+\alpha\bigl(g-\Pi_{B_d}g\bigr)^{(s)}(x_0).
\end{equation*}
The function $g$ has parity $(-1)^{k+1}$ about $x_0$ and, by
Lemma~\ref{lem:parity_local}, so does $\Pi_{B_d}g$; hence
$\bigl(g-\Pi_{B_d}g\bigr)^{(s)}(x_0)=0$ whenever $k-s$ is even.
Moreover, $s\le k$, so $r^{(s)}(x_0)=0$ and
\begin{equation*}
|(u-\Pi_{B_d}u)^{(s)}(x_0)|=|(\Pi_{B_d}r)^{(s)}(x_0)|.
\end{equation*}
The estimate $|(\Pi_{B_d}r)^{(s)}(x_0)|\le C h^{k+2-s}|u|_{W^{k+2,\infty}(B_d)}$
for $s\ge2$ is proved in Appendix~\ref{app:lnh}
(Lemma~\ref{lem:app_sharp}); this completes the proof.
\end{proof}

The estimate \eqref{lemma, u-pihu} is sharp: it exhibits the full order
$k+2-s$ without any logarithmic factor.  In the proof above it is
reduced to the estimate of $(\Pi_{B_d}r)^{(s)}(x_0)$; the latter is
established in Appendix~\ref{app:lnh} by a somewhat lengthy and
technical argument based on local discrete delta functions.

Together with the projection difference estimate
(Lemma~\ref{lem:proj_diff_sge2}), we obtain the superconvergence of
$e^{(s)}(x_0)= (u-u_h)^{(s)}(x_0)$ for $s\ge2$.  For the remaining cases
$s=0,1$, Wahlbin~\cite[Chapter 1]{Wahlbin1996} directly analyzes
$(u-u_h)^{(s)}(x_0)$ and obtains the same enhanced rate.  Consequently,
we have the following fundamental result.

\begin{theorem}\label{thm:local_super}
Let $x_0\in\Omega$ be such that $B_d(x_0)$ is symmetric about $x_0$ with
$d = C^*h|\ln h|$ and $C^*$ sufficiently large.  If $k-s$ is even, then
\begin{equation*}
|(u-u_h)^{(s)}(x_0)| \le C h^{k+2-s},
\label{eq:local_super_result}
\end{equation*}
where $C$ is independent of $h$ and $\ln h$.  Here, $0\le s\le\mu$ when $x_0$ is a mesh node, and $0\le s\le k$ when $x_0$ lies in the interior of an element.
\end{theorem}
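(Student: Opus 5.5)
The proof of Theorem~\ref{thm:local_super} splits by derivative order: the range $s\ge2$ is handled by the two lemmas of this section, and $s=0,1$ is taken from Wahlbin.

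For $2\le s$ (with $s\le\mu$ at a mesh node and $s\le k$ inside an element), I start from the localized decomposition~\eqref{eq:decomposition},
\[
e^{(s)}(x_0)=(\Pi_{B_d}u-u_h)^{(s)}(x_0)+(u-\Pi_{B_d}u)^{(s)}(x_0),
\]
and bound the two terms separately.

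\textbf{First term.} Lemma~\ref{lem:proj_diff_sge2} gives $O(h^M)$ for any $M$, provided $C^*$ is large. I choose $M=k+2-s$, which fixes how large $C^*$ must be. The constant $C^*=(M+s)/c$ from Lemma~\ref{lem:trunc} then equals $(k+2)/c$. This depends only on $k$ and the decay rate $c$ of Lemma~\ref{lem:decay}, not on $h$, so the hypothesis ``$C^*$ sufficiently large'' is consistent.

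\textbf{Second term.} I invoke Lemma~\ref{lemma,2}. The symmetry of $B_d(x_0)$ about $x_0$ is needed there through Lemma~\ref{lem:parity_local}. That lemma kills the contribution of the monomial $(x-x_0)^{k+1}$ when $k-s$ is even. The remainder $r$ is then controlled by the appendix estimate (Lemma~\ref{lem:app_sharp}), which yields
\[
Ch^{k+2-s}\,|u|_{W^{k+2,\infty}(B_d)}\le Ch^{k+2-s}.
\]
Here $u$ is smooth because $f$ is. Adding the two bounds gives the claim, with a constant free of $\ln h$, since neither lemma introduces a logarithm.

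One point needs checking: the symmetry hypothesis must cover the slightly enlarged region $B_{d^*}$ used in defining the reflection. Since $d^*=d+Ch$, the symmetry assumption on a $C^*h|\ln h|$ neighborhood can be applied with $C^*$ slightly enlarged, and nothing is lost.

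\textbf{The cases $s=0,1$.} The discrete delta argument fails at $s=1$, as the remark after Lemma~\ref{lem:proj_diff_sge2} explains, and $s=0$ is outside the definition~\eqref{eq:discrete_delta} altogether. For these I rely on Wahlbin~\cite[Chapter 1]{Wahlbin1996}, whose local symmetry theory gives $O(h^{k+2})$ for values and $O(h^{k+1})$ for first derivatives at symmetry centers when $k-s$ is even, under exactly the hypothesis of local symmetry in a $Ch|\ln h|$ neighborhood. To make the citation precise, I would match his hypotheses: his results are stated for meshpoints and midpoints of symmetric meshes, and in our setting these are precisely the symmetric centers $x_0$ considered. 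The admissible ranges also agree: $s=0$ is always admissible, and $s=1$ at a mesh node requires $\mu\ge1$. When $\mu=0$ and $x_0$ is a node, the case $s=1$ is excluded by the hypothesis $s\le\mu$.

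\textbf{Main obstacle.} The genuinely hard ingredient is the sharp, logarithm-free bound on $(\Pi_{B_d}r)^{(s)}(x_0)$ in Lemma~\ref{lem:app_sharp}. A naive stability bound for $\Pi_{B_d}$ on a domain of size $h|\ln h|$ would lose powers of $|\ln h|$, because $r$ grows like $|x-x_0|^{k+2}$ across $B_d$. The intended remedy is to use local discrete delta functions with exponential decay, so that only the $O(h)$ neighborhood of $x_0$ contributes effectively. Since this lemma is stated earlier, it may be assumed here, and the theorem itself reduces to the assembly above.
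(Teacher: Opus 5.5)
Your proposal is correct and follows the paper's proof exactly. It uses the decomposition \eqref{eq:decomposition}, Lemma~\ref{lem:proj_diff_sge2} for the projection difference, Lemma~\ref{lemma,2} (via Lemma~\ref{lem:parity_local} and the appendix estimate Lemma~\ref{lem:app_sharp}) for the local approximation error when $s\ge2$, and Wahlbin's Chapter~1 results for $s=0,1$; your extra bookkeeping on $M$, $C^*$ and the enlarged region $B_{d^*}$ is consistent with what the paper does implicitly.
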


\begin{proof}
Decompose $e^{(s)}(x_0)=(\Pi_{B_d}u-u_h)^{(s)}(x_0)+(u-\Pi_{B_d}u)^{(s)}(x_0)$
as in \eqref{eq:decomposition}.  For $s\ge2$, the projection difference
is $O(h^M)$ for any $M>0$ by Lemma~\ref{lem:proj_diff_sge2}, and the
local approximation error satisfies
$|(u-\Pi_{B_d}u)^{(s)}(x_0)|\le C h^{k+2-s}$ when $k-s$ is even by
Lemma~\ref{lemma,2}.  For
$s=0,1$, the result is established in~\cite[Chapter 1]{Wahlbin1996}.
\end{proof}

% ===========================================================================
% 4. Asymptotic expansion on an element
% ===========================================================================
\section{Asymptotic expansion of the error on an element}
\label{sec:asymptotic}

Let $x_0\in\Omega$ be a fixed point such that the mesh is uniform within
$B_d(x_0)$ with $d=C^*h|\ln h|$ and $C^*$ sufficiently large.
Consider elements $I_i=[x_i,x_{i+1}]$ whose midpoints
$x_{i+1/2}$ satisfy $|x_{i+1/2}-x_0| = O(h)$.  These elements lie well
inside $B_d(x_0)$; consequently, Theorem~\ref{thm:local_super} applies at
their midpoints (for derivatives up to order $k$) and at their endpoints (for
derivatives up to order $\mu$).  In this section, we develop an asymptotic
expansion of the error $e=u-u_h$ on each such element, and determine the
leading-order coefficients from the combined Galerkin orthogonality and the superconvergence conditions.

First, we present the expansion of the error and preliminarily establish some high-order properties of the coefficients.
Map $I_i=[x_i,x_{i+1}]$ to $[-1,1]$ via
\begin{equation}
t=\frac{2(x-x_{i+1/2})}{h}\in[-1,1],\qquad
x_{i+1/2}=x_i+\frac{h}{2}.
\label{eq:affine_map}
\end{equation}
Expand the error in Legendre polynomials:
\begin{equation}\label{eq:legendre_expansion}
e|_{I_i}(t)=\sum_{j=0}^{k+1}c_{j,i}L_j(t)+R_i(x),
\end{equation}
where $L_j$ is the Legendre polynomial of degree $j$. 
By the Bramble--Hilbert lemma, the remainder satisfies $|R_i^{(s)}(x)|\le C h^{k+2-s}|u|_{W^{k+2,\infty}(I_i)}$
for $0\le s\le \mu$ if $x$ is an endpoint of $I_i$, and $0\le s\le k$ otherwise. The coefficients are
\[
c_{j,i}= \frac{2j+1}{2}\int_{-1}^1 e|_{I_i}(t)L_j(t)\,\dd t,\qquad
0\le j\le k+1,
\]
which satisfies $c_{j,i}=O(h^{k+1})$ generically. Using the superconvergence property  of the midpoints in Theorem~\ref{thm:local_super}, the following lemma shows that half of the coefficients are
of higher order.

\begin{lemma}\label{lemma:midpoint}
If $j\equiv k\pmod{2}$, then $c_{j,i}=O(h^{k+2})$.
\end{lemma}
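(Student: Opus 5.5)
The plan is to evaluate derivatives of the Legendre expansion \eqref{eq:legendre_expansion} at the midpoint $t=0$ and invert the resulting triangular system. By Theorem~\ref{thm:local_super}, at the midpoint $x_{i+1/2}$ (an interior point of $I_i$) we have $e^{(s)}(x_{i+1/2})=O(h^{k+2-s})$ for every $0\le s\le k$ with $k-s$ even. The chain rule for the affine map \eqref{eq:affine_map} gives $\frac{d^s}{dx^s}=(2/h)^s\frac{d^s}{dt^s}$. Differentiating the expansion $s$ times at $t=0$ and using the Bramble--Hilbert bound $|R_i^{(s)}(x_{i+1/2})|\le Ch^{k+2-s}$ therefore yields
\[
\Bigl(\tfrac{2}{h}\Bigr)^s\sum_{j=0}^{k+1}c_{j,i}L_j^{(s)}(0)=O(h^{k+2-s}),
\qquad\text{i.e.}\qquad
\sum_{j=0}^{k+1}c_{j,i}L_j^{(s)}(0)=O(h^{k+2}),
\]
for all $s\in\{k,k-2,k-4,\dots\}$ with $s\ge0$.

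Next we use parity. $L_j^{(s)}$ has parity $(-1)^{j-s}$, so $L_j^{(s)}(0)=0$ unless $j\equiv s\equiv k\pmod 2$. Only indices $j\equiv k \pmod 2$ with $s\le j\le k+1$ survive, and since $k+1\not\equiv k$, this means $j\in\{s,s+2,\dots,k\}$. Each condition becomes
\[
\sum_{\substack{j=s\\ j\equiv k\,(2)}}^{k} c_{j,i}L_j^{(s)}(0)=O(h^{k+2}).
\]
This is an upper-triangular system in the unknowns $c_{k,i},c_{k-2,i},\dots$, one equation for each $s\equiv k$. Its diagonal entries are $L_s^{(s)}(0)=\frac{(2s)!}{2^s(s!)^2}\neq0$ (the leading coefficient times $s!$). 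We then solve by back substitution, starting from $s=k$, which gives $c_{k,i}L_k^{(k)}(0)=O(h^{k+2})$. Descending in steps of $2$, each $c_{s,i}$ equals $O(h^{k+2})$ minus a fixed $h$-independent combination of the already-bounded $c_{j,i}$ with $j>s$. Hence $c_{j,i}=O(h^{k+2})$ for all $j\equiv k\pmod 2$.

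The main point to check is that Theorem~\ref{thm:local_super} genuinely applies at $s=0$ and $s=1$ at midpoints; this is Wahlbin's result, which the paper takes as given. We also need the Bramble--Hilbert remainder bound at an interior point for all $s\le k$, which the paper asserts. The other potential subtlety is whether the remainder $R_i$ might itself carry components along $L_j$ that interfere. It does not: we never project, but only use pointwise derivative values of $e$ and $R_i$ at $t=0$. So the argument reduces cleanly to the nonsingular triangular system above. The constants are uniform because the $L_j^{(s)}(0)$ are fixed numbers depending only on $k$.
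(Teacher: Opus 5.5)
Your proposal is correct and follows essentially the same route as the paper: evaluate $s$-th derivatives of the expansion at the midpoint for $s=k,k-2,\dots$, use Theorem~\ref{thm:local_super} and the parity $L_j^{(s)}(0)=0$ for $j-s$ odd to obtain a triangular system, and back-substitute downward from $s=k$. One small slip: the diagonal entry is $L_s^{(s)}(0)=\frac{(2s)!}{2^s s!}$, i.e.\ the leading coefficient $\frac{(2s)!}{2^s(s!)^2}$ times $s!$, not the leading coefficient itself. This is harmless, since the argument only needs it to be nonzero.
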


\begin{proof}
By the parity of Legendre polynomials, 
\begin{equation}\label{eq: parity Lj}
	L_j^{(s)}(0)=0,\quad j-s  \text{ odd}.
\end{equation}
Differentiating~\eqref{eq:legendre_expansion} at the midpoint $t=0$ gives
\begin{equation}
e^{(s)}(x_{i+1/2}) = \frac{2^s}{h^s}\sum_{j=s}^{k+1} c_{j,i} L_j^{(s)}(0) + R_i^{(s)}(x_{i+1/2}).
\label{eq:midpoint_expansion}
\end{equation}
Take $s=k$ in~\eqref{eq:midpoint_expansion}.  Theorem~\ref{thm:local_super}
with $s=k$ (since $k-k=0$ is even) gives $|e^{(k)}(x_{i+1/2})|\le Ch^2$.
Hence, by \eqref{eq: parity Lj},
\[
|\frac{2^k}{h^k}c_{k,i}L_{k}^{(k)}(0)| \le Ch^2.
\]
which implies $c_{k,i}=O(h^{k+2})$.
Then take $s=k-2,k-4,\dots$ successively.  For each such $s$, $k-s$ is even,
so Theorem~\ref{thm:local_super} yields $|e^{(s)}(x_{i+1/2})|\le Ch^{k+2-s}$. By \eqref{eq: parity Lj},
the sum in~\eqref{eq:midpoint_expansion} involves only indices $j\equiv k\pmod{2}$.
All $c_{j,i}$ with $j>s$ and $j\equiv k\pmod{2}$ are already $O(h^{k+2})$ from
previous steps, thus they contribute $O(h^{k+2-s})$.  Consequently,
\[
|\frac{2^s}{h^s}\,c_{s,i}L_s^{(s)}(0)| \le Ch^{k+2-s},
\]
which implies $c_{s,i}=O(h^{k+2})$.  As the indices $s=k,k-2,k-4,\dots$
exhaust all $j\le k$ with $j\equiv k\pmod{2}$, the lemma follows.
\end{proof}

Furthermore, the coefficients $c_{j,i}$ are essentially independent of the
element index.  By the translation invariance of the mesh within $B_d$, we
have the following result.

\begin{lemma}\label{lemma:const_mode}
There exist numbers
$c_j$ (independent of $i$) such that, for every element $I_i$ with midpoint
$x_{i+1/2}$ satisfying $|x_{i+1/2}-x_0| = O(h)$,
\[
c_{j,i}=c_j+\delta_{j,i},\qquad |\delta_{j,i}|\le C h^{k+2},
\]
where $C$ depends on $k$, $\mu$, and $u$, but not on $h$ or $i$.
\end{lemma}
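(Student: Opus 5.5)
Fix a reference element $I_{i_0}$ near $x_0$ (say the one containing $x_0$) and set $c_j:=c_{j,i_0}$. It then suffices to show $c_{j,i}-c_{j,i'}=O(h^{k+2})$ for any two neighbouring elements $I_i,I_{i+1}$ near $x_0$. Since only $O(1)$ elements have $|x_{i+1/2}-x_0|=O(h)$, chaining these bounds gives the lemma. The tool is a comparison of the discrete error with its translate by one mesh step $h$. Within $B_d$ the mesh is uniform, so the spline space is translation invariant there.

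\textbf{Step 1 (Taylor splitting).} As in Lemma~\ref{lemma,2}, write $u=q_k+\alpha g+r$ about $x_0$, with $g(x)=(x-x_0)^{k+1}$ and $r=O(|x-x_0|^{k+2})$. By linearity of the Galerkin map, $e=u-u_h=e_g\alpha+e_r$. The polynomial part $q_k$ is reproduced and contributes nothing. For the remainder, on elements at distance $O(h)$ from $x_0$ one has $r=O(h^{k+2})$ there, and I expect a local $L^\infty$ error estimate to give $e_r|_{I_i}=O(h^{k+2})$ up to a harmless global contribution. This is the delicate point. I would obtain it with the same localisation as in Section~\ref{sec:super}: decompose $e_r=(r-\Pi_{B_d}r)+(\Pi_{B_d}r-(u_h)_r)$. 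For the first term, use the sharp bound of Appendix Lemma~\ref{lem:app_sharp} applied to all derivatives. For the second, use exponential decay (Lemma~\ref{lem:decay}) to make it $O(h^{M})$ in derivatives $s\ge2$. Combined with Legendre coefficients being weighted integrals, this yields contributions $O(h^{k+2})$ to each $c_{j,i}$ from $e_r$. Since the projection-difference argument fails for $s\le1$, I would instead recover the low-order content of $c_{0,i},c_{1,i}$ from Wahlbin's $s=0,1$ superconvergence (Theorem~\ref{thm:local_super}) and the endpoint behaviour.

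\textbf{Step 2 (translation of the $g$-part).} Let $\tau$ denote translation by $h$. Then $g(x+h)-g(x)$ is a polynomial of degree $k$ plus terms of lower order in $(x-x_0)$. Precisely, $g(x+h)=g(x)+\text{(degree }\le k)$, so the difference is reproduced exactly by the local projection $\Pi_{B_d}$ when $B_d$ is uniform. Hence, for the local projection, $\tau(g-\Pi_{B_d}g)=g-\Pi_{B_{d}+h}g$ on the overlap. Because uniformity holds on $B_d$, the projections on $B_d$ and on the shifted box differ only through boundary data far away, which by the exponential-decay arguments of Section~\ref{sec:super} affects the interior by $O(h^M)$. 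Mapping to $[-1,1]$ via \eqref{eq:affine_map}, the Legendre coefficients of $g-\Pi_{B_d}g$ on $I_i$ and on $I_{i+1}$ therefore agree up to $O(h^M)$. Multiplying by $\alpha=O(1)$ and adding the $O(h^{k+2})$ from Step 1 gives $|c_{j,i+1}-c_{j,i}|\le Ch^{k+2}$.

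\textbf{Main obstacle.} The crux is Step 1's control of the global-versus-local difference in low derivatives. Lemma~\ref{lem:proj_diff_sge2} only covers $s\ge2$, and the remark after it says $(\Pi_{B_d}u-u_h)'=O(h^k)$ is not small in general. I would handle this by noting that $\Pi_{B_d}u-u_h$ has $s$-th derivatives $O(h^M)$ for all $2\le s\le k$ at every point near $x_0$. Hence on the $O(1)$ elements near $x_0$ it is, up to $O(h^M)$, a single global linear function $A+B(x-x_0)$. A common affine function changes $c_{0,i},c_{1,i}$ by amounts that vary smoothly in $i$: $c_1$ is identical across elements, and $c_0$ shifts by $Bh$. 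For $c_0$ I would then invoke Wahlbin's $O(h^{k+2})$ nodal/Lobatto superconvergence for $s=0$ and $O(h^{k+1})$ for $s=1$ to show $B=O(h^{k+1})$, so that the shift $Bh=O(h^{k+2})$.
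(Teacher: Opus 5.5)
\textbf{There is a genuine gap: your treatment of the low-order coefficients $j=0,1$ does not work.}

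\textbf{What works.} The reduction to adjacent elements is the same as the paper's. Your argument for $j\ge 2$ is also sound, and is a legitimate variant of the paper's Case~2. You use a fixed Taylor centre $x_0$ and the translation identity for $g-\Pi_{B_d}g$; the paper uses a moving centre $x_{\ell+1/2}$, an $\ell$-independent $Q_k$, and the $O(h)$ variation of $u^{(k+1)}$. For $j\ge2$ only the non-affine parts matter, and these are controlled by the argument of Lemma~\ref{lem:proj_diff_sge2} and the $s\ge2$ bound of Lemma~\ref{lem:app_sharp}.

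\textbf{Where Step~2 fails.} You claim that the projections on $B_d$ and on the shifted box agree near $x_0$ up to $O(h^M)$. The exponential-decay argument only gives this for derivatives of order $\ge2$: for $s=1$ the discrete delta has mean one, as the paper's remark notes. The difference of two local projections is discrete harmonic on the overlap, so it is an affine function plus boundary layers. The layers are negligible near $x_0$, but the affine part is not, and it shifts $c_0$.

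\textbf{A concrete counterexample.} Take $k=2$, $\mu=1$, $x_0$ a node, and $u=(x-x_0)^3$ near $B_d$, so that $r\equiv0$ there.
\begin{itemize}
\item Write $g-\Pi_{B_d}g=h^3\bar B_3((x-x_0)/h)+K(x-x_0)+(\text{layers})$, where $\bar B_3$ is the periodic Bernoulli function.
\item $g-h^3\bar B_3(\cdot/h)$ is a $C^1$ quadratic spline, and $\bar B_3$ is Galerkin-orthogonal to compactly supported splines.
\item Matching value and slope at $x_0\pm d$ gives $K=h^2/(4\sqrt3\,n-2)$ with $n=d/h$.
\item Hence the $L_0$-coefficients of $g-\Pi_{B_d}g$ on neighbouring elements differ by $Kh\sim h^3/|\ln h|\gg h^4$.
\item The total $c_{0,i}$ is $O(h^4)$ by Lemma~\ref{lemma:midpoint}. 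The slope $B$ of $\Pi_{B_d}u-u_h$ must therefore compensate, so $B=-K+O(h^3)$.
\end{itemize}
Your target bound $B=O(h^{k+1})$ is thus false for even $k$.

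\textbf{The remaining cases.} You never use Lemma~\ref{lemma:midpoint}, which disposes of every $j\equiv k\pmod 2$. The cases that genuinely need an argument are $j=0$ with $k$ odd, and $j=1$ with $k$ even. Your appeal to Wahlbin does not close either one.
\begin{itemize}
\item There is no $s=0$ superconvergence at nodes or midpoints when $k$ is odd; the Lobatto points are a $\mu=0$ phenomenon.
\item $e'(x_0)=O(h^{k+1})$ only controls the combination $B-(\Pi_{B_d}r)'(x_0)$. The quantity $(\Pi_{B_d}r)'(x_0)$ is an $s=1$ estimate that Appendix~\ref{app:lnh} does not provide.
\item The affine components of $\alpha(g-\Pi_{B_d}g)$ and $r-\Pi_{B_d}r$ shift $c_0$, and their slopes shift $c_1$. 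You leave both uncontrolled, since your $O(h^M)$ claim for them is unjustified.
\end{itemize}

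\textbf{The missing idea (the paper's Cases 3 and 4).} After settling $j\equiv k\pmod 2$ and $j\ge2$, evaluate the Legendre expansion of $e$ at the shared node $x_{\ell+1}$ from both sides. Use $L_j(\pm1)=(\pm1)^j$ and the $O(h^{k+2})$ Bramble--Hilbert remainder.
\begin{itemize}
\item For odd $k$, continuity of $e$ leaves $c_{0,\ell}-c_{0,\ell+1}$ as the only term not already controlled.
\item For even $k$, the nodal bound $e(x_{\ell+1})=O(h^{k+2})$ from Theorem~\ref{thm:local_super} with $s=0$ isolates $c_{1,\ell}-c_{1,\ell+1}$.
\end{itemize}
This closes the lemma without any information on first derivatives of projection differences.
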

\begin{proof}
It suffices to prove the following inequality for every pair of adjacent
elements $I_\ell$ and $I_{\ell+1}$ with midpoints $x_{\ell+1/2},x_{\ell+3/2}$:
\begin{equation}\label{eq:const_diff}
|c_{j,\ell+1}-c_{j,\ell}| \le C_0\,h^{k+2},
\end{equation}
for then the triangle inequality gives the desired result.

\noindent\textit{Case 1: $j\equiv k\pmod{2}$.}
By Lemma~\ref{lemma:midpoint}, $c_{j,\ell},c_{j,\ell+1}=O(h^{k+2})$.
Hence their difference is also $O(h^{k+2})$, and~\eqref{eq:const_diff} holds
immediately.

\noindent\textit{Case 2: $j\ge 2$ and $j\not\equiv k\pmod{2}$.}
Let $\Pi_{\ell}$ be the local Ritz projection on
$B_d(x_{\ell+1/2})$ (cf.\ \eqref{eq:local_projection}).
Decompose $e=(u-\Pi_{\ell}u)+(\Pi_{\ell}u-u_h)$. We analyze each term on $I_\ell$.

Taylor-expand $u$ about $x_{\ell+1/2}$:
\[
u(x)=\sum_{m=0}^{k}\frac{u^{(m)}(x_{\ell+1/2})}{m!}(x-x_{\ell+1/2})^m
+\frac{u^{(k+1)}(x_{\ell+1/2})}{(k+1)!}(x-x_{\ell+1/2})^{k+1}
+R_{u,l}(x),
\]
where $|R_{u,l}(x)|\le C|x-x_{\ell+1/2}|^{k+2}$.
Since $\Pi_{\ell}$ reproduces all polynomials of degree $\le k$, we have
\[
(u-\Pi_{\ell}u)(x)
=\frac{u^{(k+1)}(x_{\ell+1/2})}{(k+1)!}
 \Bigl((x-x_{\ell+1/2})^{k+1}-\Pi_{\ell}
[(x-x_{\ell+1/2})^{k+1}](x)\Bigr)
+(R_{u,l}-\Pi_{\ell}R_{u,l})(x).
 \]
Map $I_\ell$ to $[-1,1]$ via \eqref{eq:affine_map}.
On this reference element, $(x-x_{\ell+1/2})^{k+1}=(\tfrac{h}{2})^{k+1}t^{k+1}$.
Let $Q_{k,\ell}$ be the polynomial such that
\[
\Pi_{\ell}[(x-x_{\ell+1/2})^{k+1}](x_{\ell+1/2}+\tfrac{h}{2}t)
= \bigl(\tfrac{h}{2}\bigr)^{k+1} Q_{k,\ell}(t).
\]
Since $|x_{\ell+1/2}-x_0|=O(h)$ and $d=C^*h|\ln h|$, the domain
$B_d(x_{\ell+1/2})$ is contained in $B_{d+Ch}(x_0)$.
The mesh is uniform inside $B_d(x_0)$, hence also
uniform on $B_d(x_{\ell+1/2})$.  Therefore the spline spaces
$S_h^{k,\mu}(B_d(x_{\ell+1/2}))$ for different $\ell$ are
translations of one another.  Let $T_\ell$ be the translation operator
$(T_\ell f)(x)=f(x+x_{\ell+1/2}-x_0)$.
Because the bilinear form $a_{B_d}(\cdot,\cdot)$ and the boundary conditions
in~\eqref{eq:local_projection} are invariant under translation of the
domain when the mesh is uniform, the local Ritz projection satisfies
$T_\ell\circ\Pi_{B_d(x_{\ell+1/2})} = \Pi_{B_d(x_0)}\circ T_\ell$.
Consequently $Q_{k,\ell}\equiv Q_k\in\mathcal{P}_k$, independent of $\ell$.  Then on $I_\ell$
\[
(x-x_{\ell+1/2})^{k+1}-
\Pi_{\ell}[(x-x_{\ell+1/2})^{k+1}](x)
= h^{k+1}\psi_{k+1}(t),\qquad
\psi_{k+1}(t):=\frac{1}{2^{k+1}}\bigl(t^{k+1}-Q_k(t)\bigr).
\]
The remainder satisfies $R_{u,l}=O(h^{k+2})$ on $I_\ell$, and
by stability of $\Pi_{\ell}$, $\|R_{u,l}-\Pi_{\ell}R_{u,l}\|_{L^\infty}
\le Ch^{k+2}$.  Therefore
\[
(u-\Pi_{\ell}u)(x_{\ell+1/2}+\tfrac{h}{2}t)
=\frac{u^{(k+1)}(x_{\ell+1/2})}{(k+1)!}\,h^{k+1}\psi_{k+1}(t)
+O(h^{k+2}).
\]

Consider the projection difference $v:=\Pi_{\ell}u-u_h$ on $I_\ell$.
Lemma~\ref{lem:proj_diff_sge2} (centered at $x_{\ell+1/2}$) gives
$v^{(s)}(x_{\ell+1/2})=O(h^M)$ for all $s\ge 2$.
Since $v$ is a polynomial of degree $\le k$ on $I_\ell$, Taylor expansion
about $x_{\ell+1/2}$ yields
\[
v\bigl(x_{\ell+1/2}+\tfrac{h}{2}t\bigr)
= v(x_{\ell+1/2}) + v'(x_{\ell+1/2})\tfrac{h}{2}t
+ \sum_{s=2}^{k} \frac{v^{(s)}(x_{\ell+1/2})}{s!}\bigl(\tfrac{h}{2}\bigr)^{s}t^{s}
= v(x_{\ell+1/2}) + v'(x_{\ell+1/2})\tfrac{h}{2}t+ O(h^{M+2}),
\]
where the $O(h^{M+2})$ bound collects the terms $s\ge 2$.
For $j\ge 2$, $L_j$ is $L^2(-1,1)$-orthogonal to $\{1,t\}$, so
\[
\int_{-1}^{1} v(t)\,L_j(t)\,\dd t = O(h^{M+2}).
\]
Choosing $M\ge k$ makes this contribution $O(h^{k+2})$, hence negligible.
Consequently
\[
c_{j,\ell}=
\frac{2j+1}{2}\int_{-1}^1 (u-\Pi_{\ell}u)(t)L_j(t)\,\dd t
+O(h^M)
=\beta_j\,\frac{u^{(k+1)}(x_{\ell+1/2})}{(k+1)!}\,h^{k+1}+O(h^{k+2}),
\]
where $\beta_j=\frac{2j+1}{2}\int_{-1}^1\psi_{k+1}(t)L_j(t)\,\dd t$.
Applying the same argument to $I_{\ell+1}$ and using
$|u^{(k+1)}(x_{\ell+3/2})-u^{(k+1)}(x_{\ell+1/2})|
= h\,u^{(k+2)}(\xi)=O(h)$ (which requires
$u\in W^{k+2,\infty}(B_d)$, as assumed throughout),
\[
|c_{j,\ell+1}-c_{j,\ell}|
=|\beta_j|\,\frac{|u^{(k+1)}(x_{\ell+3/2})-u^{(k+1)}(x_{\ell+1/2})|}
{(k+1)!}\,h^{k+1}+O(h^{k+2})=O(h^{k+2}),
\]
proving~\eqref{eq:const_diff} for $j\ge2$.

\noindent\textit{Case 3: $j=0$ and $k$ odd.}
At the node $x_{\ell+1}$ between $I_{\ell}$ and $I_{\ell+1}$, by the continuity of $e$,  evaluating the Legendre expansion at the node ($t=1$ for $I_\ell$,
$t=-1$ for $I_{\ell+1}$) and using $L_j(1)=1$, $L_j(-1)=(-1)^j$ yields
\[
\sum_{j=0}^{k+1}c_{j,\ell}=\sum_{j=0}^{k+1}(-1)^jc_{j,\ell+1}+O(h^{k+2}).
\]
For odd~$k$, Lemma~\ref{lemma:midpoint} gives $c_{j,\ell},c_{j,\ell+1}=O(h^{k+2})$ for all odd~$j$.
For even $j\ge2$, Case~2 gives $c_{j,\ell}=c_{j,\ell+1}+O(h^{k+2})$. The remaining terms give
	\[
	c_{0,\ell}
	= c_{0,\ell+1}+O(h^{k+2}).
	\]
	
	\noindent\textit{Case 4: $j=1$ and $k$ even.}
	For even $k$, Theorem~\ref{thm:local_super} with $s=0$ gives
	$e(x_{\ell +1})=O(h^{k+2})$.
	Evaluating the Legendre expansion at the node ($t=1$ for $I_\ell$,
	$t=-1$ for $I_{\ell+1}$) yields
	\[
	\sum_{j=0}^{k+1} c_{j,\ell}+
	\sum_{j=0}^{k+1} (-1)^j c_{j,\ell+1}=O(h^{k+2}).
	\]
	By Lemma~\ref{lemma:midpoint}, all even-indexed coefficients
	are $O(h^{k+2})$.  Moreover, odd $j\ge3$ satisfy
	$c_{j,\ell}=c_{j,\ell+1}+O(h^{k+2})$ by Case~2.
	The remaining terms give
\[
c_{1,\ell}
-c_{1,\ell+1}=O(h^{k+2}).
\]

Thus~\eqref{eq:const_diff} holds for all $j$, and the lemma is proved.
\end{proof}

Therefore, by Lemma~\ref{lemma:midpoint} and \ref{lemma:const_mode}, only
element-independent coefficients with parity opposite to $k$ contribute at
leading order, i.e.\ the index set
\begin{equation}
\mathcal{U}_k:=\{j=0,1,2,\dots,k+1: j\equiv k+1\pmod{2}\},
\qquad
|\mathcal{U}_k|=
\begin{cases}
\dfrac{k}{2}+1, & k\text{ even},\\[2mm]
\dfrac{k+1}{2}+1, & k\text{ odd}.
\end{cases}
\label{eq:U_set}
\end{equation}
These coefficients $\{c_j\}_{j\in \mathcal{U}_k}$ are determined by the Galerkin orthogonality and nodal
superconvergence constraints, which we develop in
Subsection~\ref{sec:determination}.  The special case $\mu=k-1$
(smoothest B-splines) admits an explicit closed form via an antiderivative
operator and is treated separately in Subsection~\ref{sec:bspline}.

% ---------------------------------------------------------------------------
% 4.1 Determination of the leading-order coefficients
% ---------------------------------------------------------------------------
\subsection{Determination of the leading-order coefficients $\{c_j\}_{j\in \mathcal{U}_k}$}
\label{sec:determination}

% ---------------------------------------------------------------------------
% 4.1.1 Galerkin orthogonality constraints
% ---------------------------------------------------------------------------
\subsubsection{Galerkin orthogonality constraints}
\label{sec:galerkin_constraints}
The Galerkin orthogonality \eqref{eq:FEM} implies
\begin{equation}
	(e',\chi)_{L^2(\Omega)}=0\qquad\forall\,\chi\in\widetilde W_h,
	\label{eq:galerkin_W}
\end{equation}
where
\begin{equation*}
	\widetilde W_h:=\bigl\{\chi\in \mathring{S}_h^{k-1,\mu-1}(\Omega): \textstyle\int_\Omega\chi=0\bigr\}.
	\label{eq:Wt_def}
\end{equation*}
Indeed, for any $\chi\in\widetilde W_h$, there exists $\eta\in S_h^{k,\mu}\cap H_0^1(\Omega)$ such that $\eta'=\chi$.

\begin{lemma}\label{lem:Phi_patch}
	Let $P = \bigcup_{\ell=1}^{M} I_{i+\ell-1}$ be a patch of $M\ge k$ consecutive elements, and let $\chi\in\widetilde W_h$ be supported on $P$.  For each $\ell=1,\dots,M$, map $I_{i+\ell-1}$ to the reference interval $[-1,1]$ via \eqref{eq:affine_map} and define the pullback polynomial
	\[
	\chi_\ell(t) := \chi\bigl(x_{i+\ell-1/2}+\tfrac{h}{2}t\bigr)\in\mathcal{P}_{k-1}([-1,1]).
	\]
	Then the summed reference polynomial
	\[
	\Phi(t) := \sum_{\ell=1}^{M} \chi_\ell(t)\in\mathcal{P}_{k-1}([-1,1])
	\]
	satisfies: 
	\[
	\displaystyle\int_{-1}^1\Phi(t)\,dt = 0, \qquad \Phi^{(m)}(-1) = \Phi^{(m)}(1) \text{ for } m=0,1,\dots,\mu-1.
	\]
	Conversely, any polynomial $\Phi\in\mathcal{P}_{k-1}$ satisfying the above conditions can be realized by some $\chi\in\widetilde W_h$ supported on a suitable patch.  Hence the set of admissible $\Phi$ is precisely
	\[
	V := \bigl\{\Phi\in\mathcal{P}_{k-1}([-1,1]): \Phi^{(m)}(-1)=\Phi^{(m)}(1),\; m=0,\dots,\mu-1,\; \textstyle\int_{-1}^1\Phi=0\bigr\},
	\]
	and $\dim V = k-\mu-1$.
\end{lemma}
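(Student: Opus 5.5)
The proof has three parts: the necessary conditions on $\Phi$, the converse realization, and the dimension count.

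\emph{Necessity.} I would read off the two conditions on $\Phi$ from the structure of $\chi$. Since the mesh is uniform on the patch (all elements have length $h$), the reference map is the same affine map on every element. The integral condition then follows from
\[
\int_\Omega\chi\,dx=\tfrac{h}{2}\sum_{\ell}\int_{-1}^1\chi_\ell\,dt=\tfrac h2\int_{-1}^1\Phi,
\]
together with $\int_\Omega \chi=0$.

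For the derivative conditions, I would use $\chi\in C^{\mu-1}$ at each interior node $x_{i+\ell}$. This gives $\chi_\ell^{(m)}(1)=\chi_{\ell+1}^{(m)}(-1)$ for $m\le\mu-1$, since the factors $(h/2)^m$ cancel. Support on $P$ forces $\chi_1^{(m)}(-1)=0$ and $\chi_M^{(m)}(1)=0$, because $\chi$ and its derivatives up to order $\mu-1$ vanish at the outer endpoints. Summing, the sum telescopes:
\[
\Phi^{(m)}(1)-\Phi^{(m)}(-1)=\sum_{\ell=1}^{M}\bigl(\chi_\ell^{(m)}(1)-\chi_\ell^{(m)}(-1)\bigr)=\chi_M^{(m)}(1)-\chi_1^{(m)}(-1)=0.
\]

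\emph{Converse.} Given $\Phi\in V$, I would build a $C^{\mu-1}$ piecewise polynomial whose pieces sum to $\Phi$. The cleanest choice is $\chi_1=\Phi-\Phi^{(m)}$-corrections, but the natural construction is as follows. Set $\chi_\ell=p_\ell$ with $p_\ell\in\mathcal P_{k-1}$ chosen so that $p_1$ has zero jets of order $\le\mu-1$ at $-1$, and $p_\ell(\cdot)$ matches $p_{\ell+1}$ in jets at the shared node.

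Concretely, I would write $\Phi=\sum_\ell p_\ell$ by splitting $\Phi$ as follows. Choose a spline on $M$ uniform cells, $C^{\mu-1}$ with zero jets at both ends, whose pieces are prescribed modulo a correction. Equivalently, consider the linear map $\mathcal S:\chi\mapsto\Phi$ from the space of $C^{\mu-1}$ splines of degree $k-1$ on $P$ with zero end jets and zero mean into $V$. I would show it is surjective via a dimension count. The domain has dimension
\[
Mk-(M+1)\mu-1.
\]
For surjectivity, the best route is local B-splines: the pieces of the uniform B-spline $\psi$ of $S^{k-1,\mu-1}$ sum to a polynomial. Taking differences of shifted B-splines gives zero mean, and one also has the freedom to use splines with several breakpoints. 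I expect this to be the main obstacle, and I would first try the following. Show that $\mathcal S$ is onto the larger space $\tilde V$ (drop the integral condition) of dimension $k-\mu$, using $M\ge k$ so that the spline space is large enough. This can be done by showing that the kernel of $\mathcal S$ has dimension exactly $\dim(\text{domain})-(k-\mu)$, or more concretely by exhibiting, for each $\Phi\in\tilde V$, a piecewise choice: set $\chi_1,\dots,\chi_{M-1}$ freely subject to jet matching and let $\chi_M:=\Phi-\sum_{\ell<M}\chi_\ell$. The jet conditions at the last node and the right end then reduce, via the telescoping identity, to exactly $\Phi\in\tilde V$ plus $\mu$ conditions at the node $x_{i+M-1}$. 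These $\mu$ conditions can be met by choosing $\chi_{M-1}$ appropriately, which is possible when $k-1\ge 2\mu-1$; otherwise more cells are used. After that, the mean condition is automatic from $\int\Phi=0$.

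\emph{Dimension.} $\mathcal P_{k-1}$ has dimension $k$. The $\mu$ periodicity conditions $\Phi^{(m)}(1)=\Phi^{(m)}(-1)$, $m=0,\dots,\mu-1$, are independent: testing them on the monomials $t^{m+1}$, $m=0,\dots,\mu-1$, gives a triangular system. The mean condition is independent of these, since $\Phi=1$ satisfies all periodicity conditions but has nonzero mean. Hence $\dim V=k-\mu-1$.
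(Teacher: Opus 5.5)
Your necessity argument and dimension count match the paper's proof. Both use the telescoping of $\chi_\ell^{(m)}(1)=\chi_{\ell+1}^{(m)}(-1)$ against the zero end jets, and both use $\Phi\equiv1$ to separate the mean functional from the periodicity functionals. Your triangular check with the test polynomials $t^{m+1}$ (degree $\le\mu\le k-1$) improves on the paper, which only asserts that the $\mu$ periodicity conditions are independent.

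The paper gives no argument at all for the converse, so that part of your proposal goes beyond it. Your reduction there is correct. With $\chi_M:=\Phi-\sum_{\ell<M}\chi_\ell$, both the matching at $x_{i+M-1}$ and the zero right jet reduce, for $\Phi\in\tilde V$, to the $\mu$ conditions $\sum_{\ell=1}^{M-1}\chi_\ell^{(m)}(1)=\Phi^{(m)}(1)$. The mean condition then follows from $\int_{-1}^1\Phi=0$.

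\textbf{The gap} is the clause ``otherwise more cells are used''. Putting all the work on $\chi_{M-1}$ is a Hermite problem with $2\mu$ conditions in $\mathcal P_{k-1}$, which is solvable only when $k\ge2\mu$. This already fails for $(k,\mu)=(7,4)$ and $(8,5)$, where $\dim V=2$. Once several cells are nonzero, the target is the summed jet $\sum_{\ell}\chi_\ell^{(m)}(1)$ rather than the jet of one cell. It is then no longer a Hermite problem, and you have not shown it is solvable for any number of cells.

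\textbf{A clean fix.}
\begin{enumerate}
\item Let $\widetilde\Phi$ be the $h$-periodic extension of the pulled-back $\Phi$. The periodicity conditions say exactly that $\widetilde\Phi$ is a $C^{\mu-1}$ piecewise $\mathcal P_{k-1}$ function on the uniform mesh. Hence $\widetilde\Phi=\sum_i a_iB_i$ in the B-spline basis of $S_h^{k-1,\mu-1}$.
\item Index the basis so that $B_i(\cdot-h)=B_{i+k-\mu}$. Periodicity of $\widetilde\Phi$ then gives $a_{i+k-\mu}=a_i$.
\item Set $F:=\sum_{i\ge i_0}a_iB_i$, which vanishes far to the left and equals $\widetilde\Phi$ far to the right. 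Define $\chi(x):=F(x)-F(x-h)=\sum_{i=i_0}^{i_0+k-\mu-1}a_iB_i$.
\item Taking $i_0$ to be the first index at a node, $\chi$ is supported on at most $k$ elements, which explains the hypothesis $M\ge k$.
\item Its pullbacks telescope, so $\sum_\ell\chi_\ell=\Phi$. Since $\int\chi=\tfrac h2\int_{-1}^1\Phi=0$, we get $\chi\in\widetilde W_h$.
\end{enumerate}

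Two smaller points need cleaning up. Drop the garbled sentence about ``$\Phi-\Phi^{(m)}$-corrections''. Also fix the map $\mathcal S$: with zero mean built into its domain, its image lies in $V$, so it cannot be onto $\tilde V$.
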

\begin{proof}
	The zero-mean condition follows directly:
	\[
	0 = \int_\Omega\chi = \sum_{\ell=1}^{M}\int_{I_{i+\ell-1}}\chi
	= \frac{h}{2}\sum_{\ell=1}^{M}\int_{-1}^1\chi_\ell
	= \frac{h}{2}\int_{-1}^1\Phi.
	\]
	For the periodic-like conditions, note that $\chi$ is $C^{\mu-1}$, so at each internal node $x_{i+\ell}$ ($\ell=1,\dots,M-1$),
	\[
	\chi_\ell^{(m)}(1) = \chi_{\ell+1}^{(m)}(-1),\qquad m=0,\dots,\mu-1. 
	\]
	Since $\operatorname{supp}\chi\subset P$, $\chi$ and all derivatives vanish at the patch endpoints $x_i$ and $x_{i+M}$, giving
	\[
	\chi_1^{(m)}(-1)=0,\qquad \chi_M^{(m)}(1)=0,\qquad m=0,\dots,\mu-1. 
	\]
		Now evaluate $\Phi^{(m)}$ at $\pm1$:
	\begin{align*}
			\Phi^{(m)}(-1) &= \sum_{\ell=1}^{M}\chi_\ell^{(m)}(-1)
			= \underbrace{\chi_1^{(m)}(-1)}_{=0}+\sum_{\ell=2}^{M}\chi_\ell^{(m)}(-1)
			= \sum_{\ell=2}^{M}\chi_{\ell-1}^{(m)}(1)
			= \sum_{\ell=1}^{M-1}\chi_\ell^{(m)}(1),\\
			\Phi^{(m)}(1) &= \sum_{\ell=1}^{M}\chi_\ell^{(m)}(1)
			= \sum_{\ell=1}^{M-1}\chi_\ell^{(m)}(1) + \underbrace{\chi_M^{(m)}(1)}_{=0}=\sum_{\ell=1}^{M-1}\chi_\ell^{(m)}(1).
		\end{align*}
		Hence $\Phi^{(m)}(-1)=\Phi^{(m)}(1)$.
	
		The dimension count follows because $\mathcal{P}_{k-1}$ has dimension $k$; the $\mu$ periodic conditions are linearly independent, and the zero-mean condition is independent of them (e.g., $\Phi\equiv1$ satisfies the periodic conditions but not the zero-mean condition).  Hence $\dim V = k - \mu - 1$.
	\end{proof}

Under Lemma~\ref{lemma:const_mode}, the pullback of $e$ to the reference
interval $[-1,1]$ is, to leading order,
\begin{equation*}
e(t)=\sum_{j\in\mathcal{U}_k}c_j L_j(t)+O(h^{k+2}),
\end{equation*}
The Galerkin orthogonality~\eqref{eq:galerkin_W} then implies
\begin{equation*}
\sum_{j\in\mathcal{U}_k}c_j \int_{-1}^1 L'_j(t)\Phi(t)\,\dd t=O(h^{k+2})\qquad\forall\,\Phi\in V.
\end{equation*}
Using the Legendre expansion
\begin{equation*}
L_j'(t) = \sum_{\substack{m=0\\ m\equiv j-1\pmod{2}}}^{j-1} (2m+1)\,L_m(t),
\end{equation*}
and exchanging the order of summation, a direct calculation gives
\begin{align*}
\sum_{j\in\mathcal{U}_k}c_j \int_{-1}^1 L'_j(t)\Phi(t)\,\dd t
&= \sum_{j\in\mathcal{U}_k}c_j
   \sum_{\substack{m=0\\ m\equiv j-1\pmod{2}}}^{j-1} (2m+1)
   \int_{-1}^1 L_m(t)\Phi(t)\,\dd t \nonumber\\
&= 2\sum_{\substack{m=0\\ m\equiv k\pmod{2}}}^{k}
      \Bigl(\sum_{\substack{j\ge m+1\\ j\in\mathcal{U}_k}} c_j\Bigr)\Phi_m,
\end{align*}
where $\Phi_m=\frac{2m+1}{2}\int_{-1}^1\Phi(t)L_m(t)\,\dd t$ are the
Legendre coefficients of $\Phi$. Define the partial sums
\begin{equation}
\alpha_m := \sum_{\substack{j\ge m+1\\ j\in\mathcal{U}_k}} c_j,  \qquad  0\le m\le k,\,\,\,
m\equiv k\pmod{2},
\label{eq:alpha_def}
\end{equation}
then we have
\begin{equation*}
\sum_{\substack{m=0\\ m\equiv k\pmod{2}}}^{k} \alpha_m\,\Phi_m
= O(h^{k+2})\qquad\forall\,\Phi\in V.
\end{equation*}
Note that $\Phi_k=0$ since $\Phi\in\mathcal{P}_{k-1}$.  Hence $\alpha_k=c_{k+1}$ does not appear
in the Galerkin orthogonality sum.  The effective indices in the sum are therefore
$0\le m\le k-2$ with $m\equiv k\pmod{2}$.
For convenience, we drop the higher-order terms and work with the 
relation
	\begin{equation}\label{eq:orth_coeff}
	\sum_{\substack{m=0\\ m\equiv k\pmod{2}}}^{k-2} \alpha_m\,\Phi_m = 0 \qquad\forall\,\Phi\in V,
	\end{equation}
	which holds up to an $O(h^{k+2})$ remainder that does not affect
	the subsequent analysis.

The following lemma shows how many independent linear relations the
Galerkin orthogonality \eqref{eq:orth_coeff} imposes on the coefficients $\alpha_m$, and
consequently on the leading-order coefficients $\{c_j\}_{j\in\mathcal{U}_k}$.

\begin{lemma}[Galerkin orthogonality constraints]\label{lem:galerkin_count}
The Galerkin orthogonality~\eqref{eq:orth_coeff}
provides exactly
\begin{eqnarray}\label{eq,N,G}
	N_{\text{Gal}}=
	\begin{cases}
		\dfrac{k}{2} - \left\lfloor\dfrac{\mu}{2}\right\rfloor - 1, & k\text{ even},\\[6pt]
		\dfrac{k-1}{2} - \left\lfloor\dfrac{\mu+1}{2}\right\rfloor,  & k\text{ odd},
	\end{cases}
\end{eqnarray}
independent linear equations among the leading-order coefficients
$\{c_j\}_{j\in\mathcal{U}_k}$.
\end{lemma}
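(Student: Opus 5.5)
The plan is to exploit the parity structure of the space $V$ from Lemma~\ref{lem:Phi_patch}. The number of independent linear equations imposed by \eqref{eq:orth_coeff} on the vector $(\alpha_m)_{m\equiv k\ (2),\,m\le k-2}$ equals the rank of the linear map
\[
\Lambda: V\to\mathbb{R}^{n},\qquad \Phi\mapsto(\Phi_m)_{m\equiv k \ (\mathrm{mod}\ 2),\ 0\le m\le k-2},
\qquad n:=\lfloor k/2\rfloor .
\]
Indeed, \eqref{eq:orth_coeff} states exactly that $\alpha$ is orthogonal to $\Lambda(V)$. So I will compute $\dim\Lambda(V)$ and then transfer the count from the $\alpha_m$ to the $c_j$. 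The transfer is immediate: by \eqref{eq:alpha_def}, $\alpha_m$ is the tail sum of the $c_j$, $j\in\mathcal{U}_k$, $j\ge m+1$. Consecutive indices $m$ of parity $k$ interlace with consecutive $j\in\mathcal{U}_k$, so the map $(c_j)\mapsto(\alpha_m)$ is unit upper triangular and hence invertible. Independent equations in the $\alpha$'s therefore remain independent in the $c$'s.

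\textbf{Step 1: parity splitting.} The reflection $t\mapsto -t$ preserves $V$. For an even $\Phi$, the condition $\Phi^{(m)}(-1)=\Phi^{(m)}(1)$ is automatic for even $m$ and is equivalent to $\Phi^{(m)}(1)=0$ for odd $m$. For an odd $\Phi$, it is automatic for odd $m$ and equivalent to $\Phi^{(m)}(1)=0$ for even $m$. The zero-mean condition only constrains the even part. Hence $V=V_{\mathrm{even}}\oplus V_{\mathrm{odd}}$. Only the part of parity $(-1)^k$ contributes to $\Lambda$, since $\Phi_m=0$ for $m$ of the opposite parity. On $\mathcal{P}_{k-1}$ restricted to parity $(-1)^k$, the coefficients $(\Phi_m)_{m\le k-2}$ form a bijection onto $\mathbb{R}^n$. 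Therefore $\dim\Lambda(V)$ equals $n$ minus the number of independent constraints defining that parity component of $V$.

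\textbf{Step 2: counting constraints.} For $k$ even, the relevant component is the even one. Its constraints are $\Phi^{(m)}(1)=0$ for odd $m\le\mu-1$, which is $\lfloor\mu/2\rfloor$ conditions, together with $\int\Phi=0$. This gives $N_{\rm Gal}=k/2-\lfloor\mu/2\rfloor-1$. For $k$ odd, the relevant component is the odd one. Its constraints are $\Phi^{(m)}(1)=0$ for even $m\le\mu-1$, which is $\lfloor(\mu+1)/2\rfloor$ conditions, with no mean condition. This gives $N_{\rm Gal}=(k-1)/2-\lfloor(\mu+1)/2\rfloor$. Both counts are nonnegative because $\mu\le k-1$.

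\textbf{Main obstacle: linear independence of these functionals.} I will write even polynomials as $\Phi=q(t^2)$ and odd polynomials as $\Phi=t\,q(t^2)$, with $\deg q\le n-1$. Then I apply the chain rule at $t=1$, using $\frac{d}{dt}=2t\frac{d}{ds}$ with $s=t^2$.

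In the even case, the vanishing of $\Phi^{(1)}(1),\Phi^{(3)}(1),\dots,\Phi^{(2r-1)}(1)$ is triangularly equivalent to $q'(1)=\dots=q^{(r)}(1)=0$, with nonzero diagonal coefficients. In the odd case, the vanishing of $\Phi(1),\Phi''(1),\dots,\Phi^{(2r-2)}(1)$ is equivalent to $q(1)=\dots=q^{(r-1)}(1)=0$.

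Functionals $q\mapsto q^{(i)}(1)$ for distinct $i$ are independent on $\mathcal{P}_{n-1}$ as long as $i\le n-1$, which the bound on $\mu$ guarantees. To see this, test against $(s-1)^i$. For the even case I also need the mean functional to be independent of the others. This holds because the constant $q\equiv1$ kills every derivative condition but has nonzero mean.

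This yields the stated ranks and completes the plan.
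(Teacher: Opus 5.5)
Your reduction to $N_{\text{Gal}}=\dim V_{(-1)^k}$ is correct: $V_{(-1)^k}$ is the parity-$(-1)^k$ component of $V$, and $\Lambda$ is injective on it. Your constraint counts in Step~2 are also correct. The gap is in the step you flag as the main obstacle. You claim that $\Phi^{(1)}(1)=\dots=\Phi^{(2r-1)}(1)=0$ is triangularly equivalent to $q'(1)=\dots=q^{(r)}(1)=0$, and similarly in the odd case. This is false, because the factor $2t$ in $\frac{d}{dt}=2t\frac{d}{ds}$ is itself differentiated. By Fa\`a di Bruno,
\[
\Phi^{(p)}(1)=\sum_{l=0}^{\lfloor p/2\rfloor}\frac{p!}{l!\,(p-2l)!}\,2^{p-2l}\,q^{(p-l)}(1),
\]
so $\Phi^{(2j-1)}(1)$ involves all of $q^{(j)}(1),\dots,q^{(2j-1)}(1)$; for instance $\Phi'''(1)=12q''(1)+8q'''(1)$. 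Two concrete counterexamples:
\begin{itemize}
\item For $(k,\mu)=(8,4)$, the even polynomial $\Phi(t)=t^6-5t^4+7t^2-3=q(t^2)$, with $q(s)=(s-1)^3-2(s-1)^2$, satisfies $\Phi'(1)=\Phi'''(1)=0$, yet $q''(1)=-4$.
\item For $(k,\mu)=(7,4)$, the odd polynomial $\Phi(t)=-3t^5+10t^3-7t$ satisfies $\Phi(1)=\Phi''(1)=0$, yet $q'(1)=4$.
\end{itemize}
So the span of your constraint functionals is not the span of $\{q\mapsto q^{(i)}(1)\}_{i\le r}$. Your last paragraph only proves independence of the latter, which does not give the independence you need.

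The repair is short and is easiest to see by degree.
\begin{itemize}
\item Even case: test the functionals $\Phi\mapsto\Phi^{(2j-1)}(1)$, $1\le j\le r$, on $\Phi_i=(t^2-1)^i$, $1\le i\le r$. These lie in $\mathcal{P}_{k-1}$ since $2r\le k-2$. For $i<j$ the entry vanishes because $\deg\Phi_i=2i<2j-1$, while $\Phi_j^{(2j-1)}(1)=(2j)!\neq0$. So the $r\times r$ matrix is triangular with nonzero diagonal. The mean functional is independent of these because $\Phi\equiv1$ annihilates all of them.
\item Odd case: use $\Phi_i=t(t^2-1)^i$, $0\le i\le r-1$ (degree $2i+1\le k-2$), against $\Phi\mapsto\Phi^{(2j)}(1)$, $0\le j\le r-1$. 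The matrix is again triangular, with diagonal entries $(2j+1)!$.
\end{itemize}
One further small correction: for odd $k$ the map $(c_j)\mapsto(\alpha_m)$ is not invertible, since $c_0$ enters no $\alpha_m$. It is surjective, being unit triangular on $(c_j)_{j\ge1}$, and surjectivity is all you need to pull back independent functionals.

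With these fixes your argument is complete, and it is more self-contained than the paper's. The paper works dually: it writes $V^\perp$ as the span of $\delta$ and the $\epsilon_r$, and expands the $\epsilon_r$ in Legendre polynomials to parametrize the admissible $\alpha_m$ by the $\lambda_r$. It then simply asserts that $\{L_m^{(r)}(1)\}$ has full column rank. That assertion is exactly the independence statement your corrected parity argument proves.
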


\begin{proof}
Equation~\eqref{eq:orth_coeff} states that the linear functional
\[
L(\Phi) = \sum_{\substack{m=0\\ m\equiv k\pmod{2}}}^{k-2} \alpha_m \Phi_m
\]
vanishes on $V$.  Hence $L\in V^\perp$.
Since $\dim V = k-\mu-1$, we have
$\dim V^\perp = \mu+1$.  The space $V^\perp$ is spanned by the
$\mu+1$ linear functionals that define the constraints on $V$:
\begin{align*}
\delta(\Phi) = \int_{-1}^1\Phi = 2\Phi_0, \qquad
\epsilon_r(\Phi) = \Phi^{(r)}(1)-\Phi^{(r)}(-1), \quad r=0,\dots,\mu-1.
\end{align*}
Here $\delta$ enforces the zero-mean condition, while $\epsilon_0,\dots,\epsilon_{\mu-1}$
enforce the periodic-like smoothness at the element boundaries. Therefore there
exist coefficients $\lambda_0,\dots,\lambda_{\mu-1}$ such that for all
$\widetilde\Phi\in\mathcal{P}_{k-1}([-1,1])$ and $ \int_{-1}^1\widetilde\Phi=0$,
\begin{equation}
L(\widetilde\Phi) = \sum_{\substack{m=1\\ m\equiv k\pmod{2}}}^{k-2} \alpha_m \widetilde\Phi_m
= \sum_{r=0}^{\mu-1}\lambda_{r}\bigl(\widetilde\Phi^{(r)}(1)-\widetilde\Phi^{(r)}(-1)\bigr).
\label{eq:galerkin_annihilator}
\end{equation}
Here $\widetilde\Phi_m$ denotes the Legendre coefficient of $\widetilde\Phi$ and we use $ \widetilde\Phi_0=\int_{-1}^1\widetilde\Phi=0$.
Expanding the right-hand side of~\eqref{eq:galerkin_annihilator} in the Legendre basis
$\widetilde\Phi(t)=\sum_{m=1}^{k-1}\widetilde\Phi_m L_m(t)$  gives
\begin{align*}
\sum_{r=0}^{\mu-1}\lambda_{r}\bigl(\widetilde\Phi^{(r)}(1)-\widetilde\Phi^{(r)}(-1)\bigr)
&= \sum_{r=0}^{\mu-1}\sum_{m=1}^{k-1}\lambda_{r}\widetilde\Phi_m L_m^{(r)}(1)\bigl(1-(-1)^{m-r}\bigr)\\
&= \sum_{m=1}^{k-1}\Bigl(\sum_{\substack{r=0\\ r\equiv m+1\pmod{2}}}^{\mu-1}2\lambda_{r} L_m^{(r)}(1)\Bigr)\widetilde\Phi_m.
\end{align*}
Comparing the coefficients of $\widetilde\Phi_m$ on both sides of~\eqref{eq:galerkin_annihilator}
gives two sets of equations: 
\begin{equation}
\alpha_m = \sum_{\substack{r=0\\ r\equiv m+1\pmod{2}}}^{\mu-1}2\lambda_{r} L_m^{(r)}(1),\quad 1\le m\le k-2,\,\,\,  m\equiv k\pmod{2},
\label{eq:alpha_lambda}
\end{equation}
and 
\begin{equation}
0 =  \sum_{\substack{r=0\\ r\equiv m+1\pmod{2}}}^{\mu-1}2\lambda_{r} L_m^{(r)}(1), 
\quad 1\le m\le k-1,\,\,\,  m\equiv k+1\pmod{2}.
\label{eq:lambda_constraints}
\end{equation}
Define the two sets of admissible $m$ (for each $k$)  and $r$ (for each $m$ and $\mu$) in \eqref{eq:alpha_lambda}:
\begin{equation*}
	\mathcal{M}_{k} = \{ m: \,1\le m\le k-2,\,  m\equiv k\pmod{2} \},
\end{equation*}
and,  since $ L_m^{(r)}(1)=0$ for $r>m$,
\begin{equation*}
   \mathcal{R}_{m,\mu} = \{ r: \, 0\le r\le \min\{\mu-1, m\},\,\,\,  r\equiv m+1\pmod{2} \}.
\end{equation*}
Equation~\eqref{eq:alpha_lambda} is a system of $\#\mathcal{M}_{k}$ equations
for $\alpha_m$ in terms of only $\#\mathcal{R}_{k-2,\mu}$ free parameters
$\lambda_{r}$.  The remaining
$\lambda_{r}$ are determined
independently by~\eqref{eq:lambda_constraints} and do not affect 
$\alpha_m$. 
The coefficient matrix $\{L_m^{(r)}(1)\}_{m\in\mathcal{M}_{k} ,r\in\mathcal{R}_{m,\mu}}$
  has full column rank $\#\mathcal{R}_{k-2,\mu}$. The $\{\alpha_m\}_{m\in\mathcal{M}_{k}}$  satisfy
$\#\mathcal{M}_{k}-\#\mathcal{R}_{k-2,\mu}$ independent linear relations, which translate to the same number of relations
among $\{c_j\}_{j\in\mathcal{U}_k}$.  Counting $\#\mathcal{M}_{k}$ and $\#\mathcal{R}_{k-2,\mu}$ yields:
\begin{itemize}
  \item \textbf{Even $k$}: $\#\mathcal{M}_{k}=k/2-1$ and $\#\mathcal{R}_{k-2,\mu}=\lfloor\mu/2\rfloor$.  Hence the first equation of \eqref{eq,N,G} follows.
  \item \textbf{Odd $k$}: $\#\mathcal{M}_{k}=(k-1)/2$ and $\#\mathcal{R}_{k-2,\mu}=\lfloor(\mu+1)/2\rfloor$.  Hence the second equation of \eqref{eq,N,G} follows.
\end{itemize}
This completes the proof.
\end{proof}

% ---------------------------------------------------------------------------
% 4.1.2 Nodal superconvergence constraints and closure theorem
% ---------------------------------------------------------------------------
\subsubsection{Nodal superconvergence constraints and closure theorem for $\{c_j\}_{j\in\mathcal{U}_k}$}
\label{sec:nodal_closure}

The following lemma shows how many independent linear relations
Theorem~\ref{thm:local_super} for nodal points imposes on the
leading-order coefficients $\{c_j\}_{j\in\mathcal{U}_k}$.

\begin{lemma}[Nodal superconvergence constraints]\label{lem:nodal_count}
The nodal superconvergence result
provides exactly
\[
N_{\text{Nod}} = \#\{s:\,0\le s\le\mu, s\equiv k\pmod{2}\}
= \begin{cases}
\left\lfloor\dfrac{\mu}{2}\right\rfloor+1, & k\text{ even},\\[8pt]
\left\lfloor\dfrac{\mu+1}{2}\right\rfloor, & k\text{ odd},
\end{cases}
\]
linear equations among the coefficients $\{c_j\}_{j\in\mathcal{U}_k}$.
\end{lemma}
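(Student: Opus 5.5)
I will evaluate Theorem~\ref{thm:local_super} at a mesh node $x_{i+1}$ of an element $I_i$ near $x_0$, and translate it into conditions on the leading-order Legendre coefficients. By Lemmas~\ref{lemma:midpoint} and~\ref{lemma:const_mode}, on the reference element the error is
\[
e(t)=\sum_{j\in\mathcal{U}_k}c_jL_j(t)+O(h^{k+2}),
\]
with the $O(h^{k+2})$ remainder controlled in $W^{s,\infty}$ after the scaling $h^{-s}$. Differentiating $s$ times and evaluating at the right endpoint $t=1$ gives
\[
e^{(s)}(x_{i+1})=\frac{2^s}{h^s}\sum_{j\in\mathcal{U}_k}c_jL_j^{(s)}(1)+O(h^{k+2-s}).
\]
Theorem~\ref{thm:local_super} applies at the node exactly when $0\le s\le\mu$ and $s\equiv k\pmod 2$. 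Hence, for each such $s$, multiplying by $h^s/2^s$ yields
\[
\sum_{j\in\mathcal{U}_k}c_jL_j^{(s)}(1)=O(h^{k+2}).
\]
Dropping higher-order terms, as was done for \eqref{eq:orth_coeff}, this is a linear equation on $\{c_j\}_{j\in\mathcal{U}_k}$. Counting the admissible $s$ gives $N_{\text{Nod}}$. For even $k$ the admissible values are $s=0,2,\dots$ up to $\mu$, giving $\lfloor\mu/2\rfloor+1$. For odd $k$ they are $s=1,3,\dots\le\mu$, giving $\lfloor(\mu+1)/2\rfloor$.

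I must also check that evaluating at $t=-1$ (the left endpoint of the neighbouring element) gives nothing new. Since every $j\in\mathcal{U}_k$ has parity $k+1$ and $s\equiv k$, we get $L_j^{(s)}(-1)=(-1)^{j-s}L_j^{(s)}(1)=-L_j^{(s)}(1)$. So the left-endpoint condition is the negative of the right-endpoint one. Continuity of derivatives up to order $\mu$ across nodes, combined with element-independence of the $c_j$, likewise yields only these same sums. Derivative orders $s$ of the opposite parity give no superconvergence at all, so they contribute no equations.

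The main obstacle is the word ``exactly'': I need the $N_{\text{Nod}}$ equations to be linearly independent. Consider the vectors $(L_j^{(s)}(1))_{j\in\mathcal{U}_k}$ for the admissible $s$. I would use the closed form
\[
L_j^{(s)}(1)=\frac{(j+s)!}{2^s s!\,(j-s)!},
\]
which is a polynomial in $j(j+1)$ of exact degree $s$. Because the admissible $s$ are distinct, these polynomials in $\lambda_j=j(j+1)$ have distinct degrees and are therefore independent. The values $\lambda_j$ are distinct for distinct $j\in\mathcal{U}_k$. Moreover, $|\mathcal{U}_k|$ exceeds the largest admissible index count, since $\mu\le k-1$. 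A Vandermonde-type argument then gives full row rank.

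If this general argument proves delicate, I would fall back on triangularity. Each equation involves only the $c_j$ with $j\ge s$ (because $L_j^{(s)}(1)=0$ for $j<s$). Ordering the equations by $s$ and the unknowns by $j$ should give an echelon structure, which makes the independence evident.
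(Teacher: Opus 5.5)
Your derivation of the nodal equations and the count of admissible $s$ are the same as in the paper. The paper evaluates the expansion at $t=1$, invokes Theorem~\ref{thm:local_super} for $0\le s\le\mu$, $s\equiv k\pmod 2$, drops the remainder, and counts. Your rescaled remainder $O(h^{k+2})$ is in fact the correct form of the paper's $O(h^{k+2-s})$. The left-endpoint and continuity checks are also right: $L_j^{(s)}(-1)=-L_j^{(s)}(1)$ for $j\in\mathcal{U}_k$, $s\equiv k$, so they reproduce the same sums.

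\textbf{Independence.} The paper does not prove independence of these equations in this lemma; it only counts. Independence comes later, inside the closure theorem, as part of the independence of the combined Galerkin--nodal system. So your independence argument goes beyond the paper, but the Vandermonde version does not work as stated. The admissible $s$ go up to $k-2$, while there are only $|\mathcal{U}_k|\approx k/2+1$ evaluation points $\lambda_j=j(j+1)$. For $k=8$, $\mu=7$ you evaluate $p_6$, of degree $6$, at $5$ points, so distinct degrees alone do not give independent evaluation vectors.

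Your triangular fallback is correct and should be the argument. Order the admissible orders $s_1<s_2<\cdots<s_N$ and take the columns $j=s_b+1$, which lie in $\mathcal{U}_k$. The entry $L^{(s_a)}_{s_b+1}(1)$ vanishes for $b<a$, because $s_b+1<s_a$. The diagonal entries are $L^{(s)}_{s+1}(1)=(2s+1)!/(2^s s!)\neq0$. This gives a nonsingular triangular $N\times N$ minor. It is a more direct proof of independence of the nodal constraints alone than anything in the paper. It does not give independence from the Galerkin constraints, which is what the closure theorem actually needs.

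\textbf{Opposite parity.} Drop the remark that derivative orders of the opposite parity show ``no superconvergence at all.'' It is false in general: for $\mu=0$ and odd $k$, the nodal values are $O(h^{2k})$ by Wahlbin's results in Table~\ref{tab:wahlbin_summary}. It is also unnecessary, since the lemma only counts the equations supplied by Theorem~\ref{thm:local_super}.
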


\begin{proof}
Theorem~\ref{thm:local_super} gives $|e^{(s)}(x_i)|\le Ch^{k+2-s}$
whenever $k-s$ is even and $s\le\mu$ (at nodes $x_i$).
With the expansion \eqref{eq:legendre_expansion}, we have
\begin{equation*}
	\sum_{j\in\mathcal{U}_k} c_j L_j^{(s)}(1) = O(h^{k+2-s}),\qquad 0\le s\le\mu,\,\,\,
	s\equiv k\pmod{2}.
\end{equation*}
For convenience, we drop the higher-order terms and work with the 
relation
\begin{equation}\label{eq: superconvergence constraints}
	\sum_{j\in\mathcal{U}_k} c_j L_j^{(s)}(1) = 0,\qquad 0\le s\le\mu,\,\,\,
	s\equiv k\pmod{2}.
\end{equation}
which holds up to an $O(h^{k+2-s})$ remainder.
Counting  the number of equations above completes the proof.
\end{proof}

Before constructing the closure theorem from the two constraint lemmas (Lemma~\ref{lem:galerkin_count} and \ref{lem:nodal_count}),
we note a special coefficient $c_0$ for odd $k$, which represents the
element-wise mean of the error
$c_0 = \frac12\int_{-1}^1 e|_{I_i}(t)\,\dd t$.
It is not captured by either lemma's constraints.  Indeed, the Galerkin orthogonality
constraints in Lemma~\ref{lem:galerkin_count} involve the sums $\alpha_m = \sum_{j\ge m+1,\,j\in\mathcal{U}_k}c_j$
for $j\ge1$, so $c_0$ never appears.  The nodal superconvergence constraints 
in Lemma~\ref{lem:nodal_count} have $s\equiv k\pmod{2}$; for odd $k$ this forces
$s\ge1$,  so $c_0$ is absent there
as well.  The following lemma shows that for certain cases, $c_0$ can be of
higher order.

\begin{lemma}[$c_0$ superconvergence]\label{lem:c0_super}
Let $k$ be odd.  For $\mu\le1$, the element-mean coefficient
satisfies $c_0 = 0$ exactly.  For $\mu\ge2$, if the uniform mesh region
$B_d(x_0)$ has diameter $d=Ch^\sigma$ with $0\leq \sigma<1$, then $c_0$ is
of higher order $O(h^{k+1+\min\{\,1-\sigma,\,1/2\,\}})$.
\end{lemma}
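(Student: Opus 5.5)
We plan to isolate $c_0$ through a single well-chosen test function in the Galerkin orthogonality~\eqref{eq:galerkin_W} and then integrate by parts. The identity underlying everything is, for any interval $J=[\xi_1,\xi_2]$ made of whole elements,
\[
\int_J e\,\dd x=\bigl[(x-\bar x)e\bigr]_{\xi_1}^{\xi_2}-\int_J (x-\bar x)e'\,\dd x ,\qquad \bar x:=\tfrac{\xi_1+\xi_2}{2}.
\]
The goal is to make the last integral vanish via~\eqref{eq:galerkin_W} and to control the boundary term by superconvergence or by averaging. By Lemma~\ref{lemma:const_mode} and Lemma~\ref{lemma:midpoint}, on each element near $x_0$ we have $\frac1h\int_{I_i}e=c_0+O(h^{k+2})$. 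Hence any estimate $\frac{1}{|J|}\int_J e=O(h^{k+1+\gamma})$ for a patch $J\subset B_d(x_0)$ gives $c_0=O(h^{k+1+\gamma})$. The extension of Lemma~\ref{lemma:const_mode} from $|x_{i+1/2}-x_0|=O(h)$ to a patch of length $|J|$ only needs the $O(h^{k+2})$ increments to accumulate over $|J|/h$ elements. That is harmless at the order we need, since it costs a factor $|J|/h$, which is absorbed whenever $|J|/h\ll h^{-\gamma}$.

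\emph{Case $\mu\le 1$.} Here $\widetilde W_h$ contains functions with at most $C^0$ continuity. We take $J=I_i$. For $\mu=0$, the function $\chi=(x-x_{i+1/2})\mathbf 1_{I_i}$ has zero mean and is the derivative of the quadratic bubble $\eta=\tfrac12\bigl((x-x_{i+1/2})^2-h^2/4\bigr)\in V_h$. Hence the interior integral vanishes and $\int_{I_i}e=\tfrac h2\bigl(e(x_i)+e(x_{i+1})\bigr)$. The leading-order representation $e=\sum_{j\in\mathcal U_k}c_jL_j$ with $k$ odd contains only even $j$, so its endpoint values coincide. The leading-order nodal value is therefore $\sum_{j}c_j$, and nodal superconvergence for $\mu=0$ (the $O(h^{2k})$ entry of Table~\ref{tab:wahlbin_summary}) makes it vanish. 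Together these give $c_0=\tfrac12\int_{-1}^1 e\,\dd t=\sum_j c_jL_j(1)=0$ at leading order. For $\mu=1$ we intend to use the two-element test function $\chi\in S_h^{k-1,0}$ equal to $x-x_i$ on $[x_{i-1},x_{i+1}]$, corrected by an interior element bubble so that it vanishes at $x_{i\pm1}$ and keeps zero mean. Integrating by parts then expresses $c_0$ through the already-determined even coefficients $c_j$, $j\ge2$. Plugging in the relations of Lemma~\ref{lem:galerkin_count} and~\eqref{eq: superconvergence constraints} should make this combination cancel identically, which is the statement $c_0=0$ at leading order.

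\emph{Case $\mu\ge2$.} Local test functions no longer separate $c_0$, so we average over a long patch. We take $J\subset B_d(x_0)$ of length $\sim d=Ch^\sigma$ and choose $\chi=(x-\bar x)-\theta$ on $J$, where $\theta$ is a $C^{\mu-1}$ spline transition layer of width $O(h)$ near $\partial J$ that enforces $\chi\in\widetilde W_h$; zero mean is arranged by symmetry. The interior contribution vanishes by~\eqref{eq:galerkin_W}. What remains is $\int_J\theta e'$ together with the boundary term, each of size $O(d\cdot h^{k})\cdot h$ or $O(d\,h^{k+1})$ before averaging. Dividing by $|J|\sim d$ leaves $O(h^{k+1})\cdot h/d=O(h^{k+2-\sigma})$, which is the $1-\sigma$ gain. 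Alternatively, the boundary term can be bounded through a global duality argument: write $\int_J e=(e',\phi')$ with $\phi$ the solution of the dual problem with data $\mathbf 1_J$, subtract its spline approximation using~\eqref{eq:FEM}, and use $\|e'\|_{L^2}=O(h^k)$ together with $\|\phi'-\Pi\phi'\|_{L^2}=O(h\cdot h^{-1/2}d^{1/2})$, the kink of $\phi'$ at $\partial J$ limiting the approximation. This gives the $h^{1/2}$ alternative, and we take the better of the two.

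The main obstacle is the $\mu=1$ cancellation and the bookkeeping in the $\mu\ge2$ layer construction: $\theta$ must be built so that $\|\theta\|_{L^\infty}=O(h)$ on an $O(h)$ layer while matching $\mu-1$ derivatives. We would first try to build $\theta$ from the B-spline representation of $x-\bar x$ by dropping the boundary B-spline coefficients, which gives exactly this structure.
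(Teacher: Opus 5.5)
\textbf{There is a genuine gap: for $\mu\ge1$ your integration-by-parts identity cannot detect $c_0$ at all.}

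With $k$ odd, the leading-order error on each element contains only even $L_j$. So $e(\xi_1)$ and $e(\xi_2)$ both equal $c_0+\sum_{j\ge2}c_j$ to leading order. The boundary term $\tfrac{|J|}{2}\bigl(e(\xi_1)+e(\xi_2)\bigr)$ therefore contributes exactly the $|J|c_0$ that sits on the left-hand side $\int_J e$. The constant $c_0$ cancels, and you are left with a relation among the $c_j$, $j\ge2$, only. Equivalently, for compactly supported continuous $\chi\in\widetilde W_h$ you have $(e',\chi)=-(e,\chi')$ with $\int\chi'=0$, which is blind to the constant mode. This is precisely why the paper remarks that $c_0$ enters none of the Galerkin or nodal constraints.

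The three cases fare as follows:
\begin{itemize}
\item \emph{Case $\mu=0$.} This works only because you import outside information on $e(x_i)$. To get $c_0=0$ \emph{exactly}, as the lemma states, you need $e(x_i)=0$ exactly. That is true for the model problem, since the Green's function is piecewise linear and lies in $V_h$, but the $O(h^{2k})$ bound alone gives only leading order.
\item \emph{Case $\mu=1$, $k$ odd.} There is no $s=0$ nodal superconvergence, so the two-element test function tells you nothing about $c_0$. The cancellation you hope for does occur, but vacuously.
\item \emph{Case $\mu\ge2$.} The bookkeeping is also wrong. To make $\chi=(x-\bar x)-\theta$ vanish at $\partial J$, $\theta$ must equal $\pm|J|/2$ there, not $O(h)$. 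The boundary term is $O(d\,h^{k+1})$, so dividing by $|J|\sim d$ leaves $O(h^{k+1})$, with no gain.
\end{itemize}

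\textbf{The missing idea is to test $e$ itself against a function of nonzero mean whose double antiderivative is a trial function.} For compactly supported $B\in S_h^{k-2,\mu-2}$, the function $\psi$ with $\psi''=B$ and $\psi(a)=\psi(b)=0$ lies in $V_h$, so $(e,B)=-(e',\psi')=0$ exactly. Here $\psi'$ is \emph{not} compactly supported, and the global Dirichlet condition $e(a)=e(b)=0$ is what pins down the constant mode.
\begin{itemize}
\item For $\mu\le1$, $B=\mathbf{1}_{I_i}$ is admissible and gives $\int_{I_i}e=0$, i.e.\ $c_0=0$ exactly on every element.
\item For $\mu\ge2$, take $B$ to be a spline plateau equal to $1$ on $N_0\sim d/h$ interior elements, with $O(1)$ transition elements. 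Then $0=N_0hc_0+h\sum_i\delta_i+O(h^{k+2})$ with $|\delta_i|\le Cih^{k+2}$ by Lemma~\ref{lemma:const_mode}. Hence $|c_0|\le C\bigl(h^{k+1+\sigma}+h^{k+2-\sigma}\bigr)$.
\end{itemize}
The cap $1/2$ in the exponent comes from balancing the drift term against the boundary term, shrinking to a subregion of size $h^{1/2}$ when $\sigma\le1/2$. So the accumulated drift you call harmless is in fact what limits the rate.

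Your dual-problem alternative is the right kind of global test function. Replacing $\mathbf{1}_J$ by such a plateau $B$, so that the dual solution lies in $V_h$, turns it into exactly this argument. As written, however, $\|e'\|_{L^2}\,\|\phi'-\Pi\phi'\|_{L^2}\le Ch^k\cdot h^{1/2}d^{1/2}$ gives only $O(h^{k+1/2}d^{-1/2})$ for the mean of $e$ over $J$, which is not even $O(h^{k+1})$.
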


\begin{proof}
Take any $B\in S_h^{k-2,\mu-2}(B_d)$ with compact support.
Since $B$ is a spline of degree $k-2$ and smoothness $\mu-2$, the equation
$\psi''=B$ on $\Omega$ together with the Dirichlet boundary conditions
$\psi(a)=\psi(b)=0$ determines a unique $\psi$:
\[
\psi(x)=\int_a^x\int_a^y B(z)\,dz\,dy
-\frac{x-a}{b-a}\int_a^b\int_a^y B(z)\,dz\,dy.
\]
Each integration raises the polynomial degree by one and the global
smoothness by one, so $\psi\in S_h^{k,\mu}$.  The boundary conditions
give $\psi\in H_0^1(\Omega)$, hence $\psi\in V_h$. Galerkin orthogonality \eqref{eq:FEM} gives 
\begin{equation*}\label{eq,B orthogonality}
	(e,B)=-(e',\psi') =0.
\end{equation*}

For $\mu=0,1$, take $B$ to be the characteristic function of a single
element~$I_i$ ($B=1$ on $I_i$, $0$ elsewhere).  Indeed, since $\mu-2\le -1$,
the space $S_h^{k-2,\mu-2}$ admits discontinuous functions.  Then $(e,B)=\int_{I_i}e= hc_0=0$, so $c_0=0$.

For $\mu\ge2$, the space $S_h^{k-2,\mu-2}$ does not admit discontinuous functions.
Note that $B_d(x_0)$ is a uniform mesh region with diameter $d=Ch^\sigma$,
$\sigma>0$, containing $N_0 = d/h = C/h^{1-\sigma}$ elements.
Construct $B\in S_h^{k-2,\mu-2}$ with $\operatorname{supp}B\subset B_d$,
$B=1$ on the interior of $B_d$, tapering smoothly to $0$ across the
$O(1)$ boundary elements (width $O(h)$).  Let $P_{\text{int}}$ denote the
set of $N_0$ elements where $B=1$, and $P_{\text{b}}$ the $O(1)$
boundary elements where $0\leq B\leq1$.  Then
\[
0 = \int_{B_d} eB =
\sum_{I_i \subset P_{\text{int}}}\int_{I_i} e
+ \sum_{I_i \subset P_{\text{b}}}\int_{I_i} eB .
\]
By Lemma~\ref{lemma:const_mode}, adjacent differences satisfy
$|c_{0,i+1}-c_{0,i}|\le Ch^{k+2}$.  Indexing elements  in $P_{\text{int}}$
from $i=1$ to $i=N_0$, we have $c_{0,i}=c_0+\delta_i$ with
$|\delta_i|\le C\,i\,h^{k+2}$.  On interior elements,
$\int_{I_i}e = hc_{0,i}= hc_0+ h\delta_i$.
On boundary elements, $\int_{I_i} eB = O(h^{k+2})$.  Thus
\[
0 = N_0 h c_0 + h\sum_{i=1}^{N_0}\delta_i + O(h^{k+2}).
\]
Since $|\sum_{i=1}^{N_0}\delta_i|\le C N_0^2 h^{k+2}$, the middle term is
$O(N_0^2 h^{k+3})$.  With $N_0=O(1/h^{1-\sigma})$ this is $O(h^{k+1+2\sigma})$.
Dividing by $N_0h = O(h^\sigma)$ gives
\[
|c_0| \le C\bigl(h^{k+1+\sigma} + h^{k+2-\sigma}\bigr),
\]
which is of higher order for $\sigma<1$.  In fact, if $\sigma\le \tfrac12$, the subregion
$B_{h^{1/2}}(x_0)\subset B_d$ (since $d=h^\sigma\ge C h^{1/2}$)
gives, with $\sigma=\tfrac12$, the improved bound
$|c_0|\le C h^{k+\frac32}$.  For $\sigma>\tfrac12$, the second term
$h^{k+2-\sigma}=h^{k+1+(1-\sigma)}$ dominates.  Combining both cases, we have
$|c_0| \le C h^{k+1+\min\{\,1-\sigma,\,1/2\,\}}$.
\end{proof}

Now, using the Galerkin orthogonality constraints (Lemma~\ref{lem:galerkin_count}), the nodal
superconvergence conditions (Lemma~\ref{lem:nodal_count}), and the $c_0$ superconvergence  for
odd~$k$ (Lemma~\ref{lem:c0_super}),  we are ready to construct the closure theorem for leading-order coefficients $\{c_j\}_{j\in\mathcal{U}_k}$.
\begin{theorem}[Closure theorem]\label{thm:closure}
Let $k\ge2$ and $0\le\mu\le k-1$.  The Galerkin orthogonality constraints   \eqref{eq:alpha_lambda} and nodal superconvergence constraints  \eqref{eq: superconvergence constraints} are  linearly independent. They uniquely determine the ratios $c_j/c_{k+1}$ ($j\in\mathcal{U}_k\setminus\{k+1\}$) for even $k$, and the ratios
$c_j/c_{k+1}$ ($j\in\mathcal{U}_k\setminus\{0, k+1\}$) for odd $k$, respectively. The error in these ratios is of the same order as the
remainder, i.e.\ full $O(h^{k+2})$.
\end{theorem}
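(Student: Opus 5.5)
Our plan is to first count the constraints against the unknowns, then prove linear independence through a direct structural argument, and finally propagate the perturbation bounds.

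\emph{Counting.} The unknowns are $\{c_j\}_{j\in\mathcal{U}_k}$, with $|\mathcal{U}_k|=k/2+1$ for even $k$. For odd $k$ we drop $c_0$, which appears in neither constraint family, leaving $(k+1)/2$ unknowns. From Lemma~\ref{lem:galerkin_count} and Lemma~\ref{lem:nodal_count}:
\begin{itemize}
\item for even $k$, $N_{\rm Gal}+N_{\rm Nod}=(k/2-\lfloor\mu/2\rfloor-1)+(\lfloor\mu/2\rfloor+1)=k/2$;
\item for odd $k$, $N_{\rm Gal}+N_{\rm Nod}=(k-1)/2$.
\end{itemize}
In both cases this is exactly one less than the number of unknowns. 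Hence, once independence is established, the homogeneous system has a one-dimensional solution space, which fixes all ratios $c_j/c_{k+1}$.

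\emph{Rewriting the system.} We would work with the partial sums $\alpha_m$ of \eqref{eq:alpha_def}, which are related to $\{c_j\}$ by an invertible triangular map. In these variables the Galerkin constraints say that
\[
(\alpha_m)_{m\in\mathcal{M}_k}\in\operatorname{span}\{(L_m^{(r)}(1))_m : r\in\mathcal{R}_{k-2,\mu}\}.
\]
The nodal constraints \eqref{eq: superconvergence constraints} can be rewritten in terms of $\alpha$ as well. For this we use the identity $L_j^{(s)}(1)=\sum_{m}(2m+1)L_m^{(s-1)}(1)$ over $m<j$ with $m\equiv j-1$ (or the explicit formula for $L_j^{(s)}(1)$). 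After summation by parts, $\sum_j c_jL_j^{(s)}(1)$ becomes a linear combination of the $\alpha_m$ (including $\alpha_k=c_{k+1}$ and, for $s=0$, $\alpha$-free terms).

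\emph{Independence.} To show the combined system has rank exactly $|\mathcal{U}_k|-1$, we would show that its only solution with $c_{k+1}=0$ is the trivial one. Assume $c_{k+1}=0$. Then $e_0:=\sum_{j\in\mathcal{U}_k}c_jL_j$ is a polynomial of degree $\le k-1$ with parity opposite to $k$. By the Galerkin relations its derivative is matched by boundary functionals $\epsilon_r$, and the nodal conditions give $e_0^{(s)}(\pm1)=0$ for $s\le\mu$ with $s\equiv k$. Parity then gives $e_0^{(s)}(1)=\pm e_0^{(s)}(-1)$ automatically for the other $s$.

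Our intended interpretation is to reconstruct $e_0$ as the restriction of a periodic spline in $S^{k,\mu}$ on the uniform mesh that is Galerkin-orthogonal to all test functions of Lemma~\ref{lem:Phi_patch} and has mean-zero derivative. Uniqueness of the periodic Ritz-type problem, restricted to the space of degree $\le k-1$, should force $e_0=0$ (up to the constant mode $c_0$ for odd $k$, which we excluded).

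If this functional argument is too slippery, the fallback is explicit. We would set up the matrix in the $\alpha$-basis, with rows $L_m^{(r)}(1)$ from \eqref{eq:alpha_lambda} and rows from the nodal conditions. We would then show a minor is nonzero using $L_m^{(r)}(1)=\frac{(m+r)!}{2^r r!(m-r)!}$: after column scaling this becomes a generalized Vandermonde-type determinant in $m(m+1)$, which is nonvanishing.

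\emph{Main obstacle and error propagation.} We expect the independence step to be the main obstacle, specifically the interaction between the nodal rows (indices $s\equiv k$) and the Galerkin rows (indices $r\equiv m+1$). We would first try the Vandermonde determinant, verifying small cases ($k=2,3,4$) to fix the pattern. For the error claim, the true system is the exact one perturbed by $O(h^{k+2})$ in the right-hand side. The nodal remainders $O(h^{k+2-s})$ become $O(h^{k+2})$ after the $h^{-s}$ scaling implicit in \eqref{eq: superconvergence constraints}. Since the reduced matrix is $h$-independent and has full rank, inverting it gives $c_j=\rho_jc_{k+1}+O(h^{k+2})$.
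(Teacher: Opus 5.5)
\paragraph{Where the proposal stands.} Your counting, the passage to the partial sums $\alpha_m$, the reduction of independence to ``$c_{k+1}=0$ forces the trivial solution (modulo $c_0$ for odd $k$)'', and the error propagation are sound and parallel the paper. Your remark that the nodal remainders become $O(h^{k+2})$ once the factor $(2/h)^s$ is removed is sharper than the $O(h^{k+2-s})$ written in Lemma~\ref{lem:nodal_count}.

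\paragraph{The gap.} The gap is the independence step, which is the whole content of the theorem. Your functional argument ends with ``should force $e_0=0$'' without naming the mechanism. The fallback does not work as described, for two reasons.
\begin{itemize}
\item The entry $L_m^{(r)}(1)=\frac{1}{2^r r!}\prod_{i=1}^{r}\bigl(m(m+1)-i(i-1)\bigr)$ is a polynomial of degree $r$ in $m(m+1)$. But all column indices $r\in\mathcal{R}_{k,\mu}$ share one parity, so no column operations reduce the minor to a generalized Vandermonde determinant.
\item Equation \eqref{eq:alpha_lambda} says that $(\alpha_m)$ lies in the column space of $V:=\bigl(L_m^{(r)}(1)\bigr)_{m\in\mathcal{M}_k,\,r\in\mathcal{R}_{k,\mu}}$. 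These entries are therefore not Galerkin rows in the $\alpha$-basis, and a nonzero minor of $V$ alone says nothing about its coupling to the nodal rows.
\end{itemize}
The paper handles exactly the interaction you flagged as the main obstacle. Via $L_{m+1}^{(s)}(1)-L_{m-1}^{(s)}(1)=(2m+1)L_m^{(s-1)}(1)$, the nodal rows become $(2m+1)L_m^{(s_i-1)}(1)$, with $s_i-1$ running over the same set $\mathcal{R}_{k,\mu}$ as $r$. So on $V_G$ (after $\alpha_0=0$ from $s=0$ when $k$ is even) the system reads $A\lambda+\alpha_k b=0$, with $A=V^{T}DV$ and $D=\operatorname{diag}\bigl(2(2m+1)\bigr)$. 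This Gram matrix is nonsingular because $V$ has full column rank. That rank comes from the triangular pattern $L_m^{(r)}(1)=0$ for $r>m$ and $L_m^{(r)}(1)>0$ for $r<m$, not from any Vandermonde structure.

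\paragraph{How your primary route closes.} Your primary route is nonetheless correct and closes in one line; you already have both halves. It gives a coordinate-free version of the paper's Gram argument.
\begin{itemize}
\item \emph{Nodal conditions give $e_0'\in V$.} With $c_{k+1}=0$, $e_0\in\mathcal{P}_{k-1}$ has parity $(-1)^{k+1}$, so $e_0^{(s)}(-1)=(-1)^{k+1+s}e_0^{(s)}(1)$. For $s\not\equiv k\pmod 2$ this gives $e_0^{(s)}(-1)=e_0^{(s)}(1)$. For $s\equiv k\pmod 2$ with $s\le\mu$, both sides vanish by \eqref{eq: superconvergence constraints}. Hence $e_0^{(s)}(1)=e_0^{(s)}(-1)$ for $0\le s\le\mu$. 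This says exactly that $e_0'$ satisfies the periodicity conditions of $V$ up to order $\mu-1$ and that $\int_{-1}^1e_0'=e_0(1)-e_0(-1)=0$, i.e.\ $e_0'\in V$.
\item \emph{Galerkin constraints give $e_0'\perp V$.} The constraints are equivalent to $\int_{-1}^1e_0'\Phi=0$ for all $\Phi\in V$. They were derived from this. Conversely, any $\alpha=2V\lambda$ yields the functional $\sum_r\lambda_r\epsilon_r$, which vanishes on $V$, once the $\lambda_r$ of the other parity in \eqref{eq:lambda_constraints} are set to zero.
\item \emph{Conclusion.} Choosing $\Phi=e_0'$ gives $\|e_0'\|_{L^2(-1,1)}=0$, so $e_0$ is constant. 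For even $k$ it is zero by oddness; for odd $k$ only the excluded $c_0$-mode remains. Combined with your count, the rank is $|\mathcal{U}_k|-1$ (resp.\ $|\mathcal{U}_k|-2$). Hence the constraints are independent, and in your reduced unknowns the kernel is a line on which $c_{k+1}\neq0$.
\end{itemize}

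\paragraph{Comparison with the paper.} In coordinates your argument is the identity $\|e_0'\|_{L^2(-1,1)}^2=4\lambda^{T}A\lambda$. The energy form dispenses with the Legendre identity, the matching of index sets, and the rank check on $V$ (the last is still needed for the count in Lemma~\ref{lem:galerkin_count}). The paper's explicit form $\lambda=-\alpha_kA^{-1}b$, by contrast, is what is actually used to compute the ratios in Section~\ref{sec:numerical}. With this step supplied and the Vandermonde fallback dropped, your proposal is a complete proof.
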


\begin{proof}
We verify that the Galerkin orthogonality and nodal superconvergence constraint sets are linearly
independent as follows.  View each equation of   \eqref{eq:alpha_lambda} and \eqref{eq: superconvergence constraints} as a linear functional on the space
$\mathbb{R}^{|\mathcal{U}_k|}$ of leading-order coefficients $\{c_j\}_{j\in\mathcal{U}_k}$.
Let $\{G_i\}_{i=1}^{N_{\text{Gal}}}$ and
$\{N_i\}_{i=1}^{N_{\text{Nod}}}$ be the Galerkin orthogonality and nodal superconvergence functionals,
and suppose they satisfy a linear relation
\begin{eqnarray}
	\sum_{i=1}^{N_{\text{Gal}}} a_i G_i + \sum_{i=1}^{N_{\text{Nod}}} b_i N_i = 0 \qquad\text{(zero functional on }\mathbb{R}^{|\mathcal{U}_k|}\text{)}.
	\label{eq:lin_relation}
\end{eqnarray}
The goal here is to prove that the only solution is  $a_i=b_i=0$ for all $i$, i.e., the independence of the
combined system $\{G_i,N_i\}$.

Define $V_G\subset\mathbb{R}^{|\mathcal{U}_k|}$ as the subspace where the
Galerkin  orthogonality functionals vanish:
\[
V_G = \{\,c\in\mathbb{R}^{|\mathcal{U}_k|}\mid G_i(c)=0,\;
i=1,\dots,N_{\text{Gal}}\,\}.
\]
Then, the independence of $\{N_i|_{V_G}\}$ implies the independence of the
combined system $\{G_i,N_i\}$.
To see this, first restrict the relation \eqref{eq:lin_relation} to $V_G$.  Since the $G_i$
vanish on $V_G$ by definition, it reduces to
\begin{equation}
	\sum_{i=1}^{N_{\text{Nod}}} b_i\, N_i|_{V_G}=0.
	\label{eq:restricted_rel}
\end{equation}
If $\{N_i|_{V_G}\}$ is independent, then \eqref{eq:restricted_rel} forces
$b_i=0$ for all $i$.  Substituting $b_i=0$ back into \eqref{eq:lin_relation}
gives $\sum_i a_i G_i=0$ on the full space $\mathbb{R}^{|\mathcal{U}_k|}$;
the independence of $\{G_i\}$ (Lemma~\ref{lem:galerkin_count}) then
yields $a_i=0$.  Hence it suffices to verify the independence of
$\{N_i|_{V_G}\}$. 

From \eqref{eq:alpha_def}, the leading-order coefficients are recovered by
$c_{m+1}=\alpha_{m}-\alpha_{m+2}$ ($0\le m\le k-2$, $m\equiv k\pmod{2}$) and
$c_{k+1}=\alpha_{k}$.
Substituting these into the nodal superconvergence constraints \eqref{eq: superconvergence constraints} and rearranging gives
\begin{equation}\label{eq:N_i_alpha}
	N_i
	=\sum_{m\in\mathcal{M}_k}\alpha_m\bigl(L_{m+1}^{(s_i)}(1)-L_{m-1}^{(s_i)}(1)\bigr)
	+\alpha_k\bigl(L_{k+1}^{(s_i)}(1)-L_{k-1}^{(s_i)}(1)\bigr)
	+\begin{cases}
		\alpha_0L_{1}^{(s_i)}(1), & k\text{ even},\\
		0, & k\text{ odd},
	\end{cases}
\end{equation}
where $\mathcal{M}_k$ is as in Lemma~\ref{lem:galerkin_count} and $s_i$
($i=1,\dots,N_{\text{Nod}}$) are the derivative orders $0\le s\le\mu$ with
$s\equiv k\pmod{2}$, in increasing order.
Here $c_0$ does not enter the constraints: for odd $k$, $L_0^{(s_i)}(1)=0$
since $s_i\ge1$, and for even $k$, $0\notin\mathcal{U}_k$.

On $V_G$, the coefficients $\alpha_m$
($m\in\mathcal{M}_k$) are expressed in terms of the free parameters
$\lambda_r$, $r\in\mathcal{R}_{k,\mu}$, via \eqref{eq:alpha_lambda}, with $\mathcal{R}_{m,\mu}$ defined
in Lemma~\ref{lem:galerkin_count}.
The coefficient $\alpha_k=c_{k+1}$ does not occur in the Galerkin
orthogonality (cf.\ the statement preceding \eqref{eq:orth_coeff}) and is
therefore free. For even $k$, the coefficient $\alpha_0$ is free as well,
since it drops out of the Galerkin orthogonality because
$\Phi_0=\frac12\int_{-1}^1\Phi=0$ for $\Phi\in V$.
For odd $k$, the coefficient $c_0\in\mathcal{U}_k$ is one of the unknowns
under consideration, but it appears in none of the constraints; it is therefore a further free parameter.
Hence $\dim V_G=|\mathcal{R}_{k,\mu}|+2$, i.e.\ $\dim V_G=N_{\text{Nod}}+1$ for even $k$ and
$N_{\text{Nod}}+2$ for odd $k$.
By the rank--nullity theorem, the independence of the $N_{\text{Nod}}$
functionals $\{N_i|_{V_G}\}$ is equivalent to the statement that the
equations $N_i|_{V_G}=0$ ($i=1,\dots,N_{\text{Nod}}$) leave free only the
overall scale $\alpha_k$ (and, for odd $k$, also $c_0$).
Substituting \eqref{eq:alpha_lambda} into \eqref{eq:N_i_alpha} gives
\begin{equation}\label{eq:N_VG}
	N_i|_{V_G}
	=\sum_{m\in\mathcal{M}_k}\bigl(L_{m+1}^{(s_i)}(1)-L_{m-1}^{(s_i)}(1)\bigr)
	\sum_{r\in \mathcal{R}_{m,\mu}}2\lambda_rL_m^{(r)}(1)
	+\alpha_k\bigl(L_{k+1}^{(s_i)}(1)-L_{k-1}^{(s_i)}(1)\bigr)
	+\begin{cases}
		\alpha_0L_{1}^{(s_i)}(1), & k\text{ even},\\
		0, & k\text{ odd},
	\end{cases}.
\end{equation}
For even $k$, the equation with $s_1=0$ reads
$N_1|_{V_G}=\alpha_0L_1(1)=\alpha_0=0$ (all other terms vanish because
$L_j(1)=1$ for every $j$); for odd $k$ there is no $\alpha_0$-term.
Using the identity
$L_{m+1}^{(s)}(1)-L_{m-1}^{(s)}(1)=(2m+1)L_m^{(s-1)}(1)$ ($s\ge1$), the
remaining equations (all equations when $k$ is odd) take the form
\begin{equation*}
	A\{\lambda_r\}+\alpha_k \{b_i\}=0,
\end{equation*}
where
\begin{equation*}
	A=\Bigl\{\sum_{m\in\mathcal{M}_k}2(2m+1)L_m^{(s_i-1)}(1)L_m^{(r)}(1)\Bigr\}_{i,r},
	\qquad
	b_i=L_{k+1}^{(s_i)}(1)-L_{k-1}^{(s_i)}(1),
\end{equation*}
and both the row indices $s_i-1$ and the column indices $r$ range over the
same set $\mathcal{R}_{k,\mu}$; in particular $A$ is square.
The matrix $A$ is a symmetric Gram matrix and is invertible.
Consequently, the equations $N_i|_{V_G}=0$ leave free only the overall
scale $\alpha_k=c_{k+1}$ and, for odd $k$, also the coefficient $c_0$:
for even $k$, they force $\alpha_0=0$ and
$\{\lambda_r\}=-\alpha_kA^{-1}\{b_i\}$; for odd $k$, they force
$\{\lambda_r\}=-\alpha_kA^{-1}\{b_i\}$, while $c_0$ does not enter the
equations.
In view of the equivalence stated above, the functionals
$\{N_i|_{V_G}\}$ are linearly independent.

From the above, the combined system  \eqref{eq:alpha_lambda} and \eqref{eq: superconvergence constraints} therefore  has
$N_{\text{Gal}}+N_{\text{Nod}}$ independent equations.
Counting these equations from Lemmas~\ref{lem:galerkin_count}
and~\ref{lem:nodal_count}:
\begin{align*}
	k\text{ even:}\quad &N_{\text{Gal}} = \frac{k}{2}-\Bigl\lfloor\frac{\mu}{2}\Bigr\rfloor-1,\,\,\, N_{\text{Nod}} = \Bigl\lfloor\frac{\mu}{2}\Bigr\rfloor+1,\,\,\,
	N_{\text{Gal}}+N_{\text{Nod}} = \frac{k}{2}=|\mathcal{U}_k| -1,\\[4pt]
	k\text{ odd:}\quad &N_{\text{Gal}} = \frac{k-1}{2}-\Bigl\lfloor\frac{\mu+1}{2}\Bigr\rfloor, \,\,\, N_{\text{Nod}} = \Bigl\lfloor\frac{\mu+1}{2}\Bigr\rfloor,\,\,\,
	N_{\text{Gal}}+N_{\text{Nod}} = \frac{k-1}{2}=|\mathcal{U}_k| -2.
\end{align*}
Consequently,  the combined independent equations  \eqref{eq:alpha_lambda} and \eqref{eq: superconvergence constraints} uniquely determine the $|\mathcal{U}_k|-1$ ratios $c_j/c_{k+1}$ ($j\in\mathcal{U}_k\setminus\{k+1\}$)  for even~$k$,  and  the $|\mathcal{U}_k|-2$ ratios $c_j/c_{k+1}$ ($j\in\mathcal{U}_k\setminus\{0,k+1\}$)  for odd~$k$, respectively.
\end{proof}

% ===========================================================================
\subsection{The special case $\mu=k-1$ (smoothest B-splines)}
\label{sec:bspline}

In this subsection, we specialize to the smoothest B-spline case
$\mu=k-1$.  From Lemma~\ref{lem:galerkin_count}, the Galerkin orthogonality \eqref{eq:alpha_lambda}  imposes no local constraints
\begin{equation*}
N_{\text{Gal}} = \begin{cases}
	\dfrac{k}{2}-\Bigl\lfloor\dfrac{k-1}{2}\Bigr\rfloor-1 = 0, \quad \text{for even } k \\[3mm]
	 \dfrac{k-1}{2}-\Bigl\lfloor\dfrac{k}{2}\Bigr\rfloor =  0,\quad \text{for odd } k
\end{cases},
\end{equation*}
so the leading-order coefficient relationships all
arise from the nodal superconvergence constraints \eqref{eq: superconvergence constraints}.  We will show that the
leading-order error polynomial admits an explicit description via an
antiderivative operator.

\begin{definition}\label{def:F_operator}
Define $\Lop$ by
\begin{equation}
\Lop(L_1)(t) = \frac{1}{3} L_2(t),\qquad
\Lop(L_j)(t) = \frac{1}{2j+1}\bigl(L_{j+1}(t)-L_{j-1}(t)\bigr),\quad j\ge2,
\label{eq:F_def_ref}
\end{equation}
extended linearly to all polynomials spanned by $\{L_1,L_2,\dots\}$.  For $k\ge1$, denote the $k$-fold
iterate by $\Lop^{k}(L_1)$ and set $F_k(t) := \Lop^{k}(L_1)(t)$.
\end{definition}

Using the derivative recurrence $(2j+1)L_j = L_{j+1}' - L_{j-1}'$, one verifies that $\Lop$ acts as a formal antiderivative:
\[
\frac{d}{dt}\bigl(\Lop(p)(t)\bigr) = p(t),\qquad
\forall p\in\operatorname{span}\{L_1,L_2,\dots\}.
\]
Consequently, for $k\ge2$,
\begin{equation}
\frac{d}{dt}F_{k}(t) = F_{k-1}(t),\qquad
F_{k}^{(s)}(t) = F_{k-s}(t)\quad (s< k).
\label{eq:F_derivative_chain}
\end{equation}

\begin{lemma}\label{lem:F_basic}
For $k\ge2$, the polynomials $F_k = \Lop^{k}(L_1)$ satisfy:
\begin{itemize}
  \item $F_k(-t) = (-1)^{k+1}F_k(t)$ (parity $(-1)^{k+1}$).
    Hence $F_k$ is odd when $k$ is even, and even when $k$ is odd.
  \item $\displaystyle\int_{-1}^{1} F_k(t)\,\dd t = 0$ for all $k\ge1$.
  \item $F_k(1) = 0$ for even $k\ge2$.
\end{itemize}
\end{lemma}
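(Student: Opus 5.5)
The three properties follow from induction on $k$, using only the explicit form of $\Lop$ in Definition~\ref{def:F_operator} and elementary facts about Legendre polynomials: $L_j(-t)=(-1)^jL_j(t)$, $L_j(1)=1$, and $\int_{-1}^1 L_j=0$ for $j\ge1$.

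\textbf{Parity.} We first show that $\Lop$ reverses parity in the following sense: if $p\in\operatorname{span}\{L_1,L_2,\dots\}$ involves only indices $j$ of a fixed parity, then $\Lop(p)$ involves only indices of the opposite parity. This is immediate from the definition, since $\Lop(L_j)$ is a combination of $L_{j+1}$ and $L_{j-1}$ (or just $L_2$ when $j=1$). The base case is $F_1=\Lop(L_1)=\tfrac13L_2$, which is even. By induction, $F_k$ is a combination of $L_j$ with $j\equiv k+1\pmod 2$. Hence $F_k(-t)=(-1)^{k+1}F_k(t)$.

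\textbf{Zero mean and the recursion.} For the second item, note that $\Lop$ never produces $L_0$. The only possible source of $L_0$ is $L_{j-1}$ with $j=1$, and $j=1$ is exactly the case excluded by the special rule $\Lop(L_1)=\tfrac13L_2$. So every $F_k$ lies in $\operatorname{span}\{L_1,L_2,\dots\}$. Consequently $\int_{-1}^1F_k=0$, and moreover each $F_k$ is a legitimate argument of $\Lop$, which justifies iterating it.

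\textbf{Vanishing at $t=1$.} For the third item, take $p=\sum_{j\ge1}p_jL_j$. Since $L_j(1)=1$,
\[
\Lop(p)(1)=\tfrac13p_1+\sum_{j\ge2}\tfrac{p_j}{2j+1}(1-1)=\tfrac13p_1.
\]
Thus $F_k(1)=\tfrac13$ times the $L_1$-coefficient of $F_{k-1}$. When $k$ is even, $k-1$ is odd, so by the parity step $F_{k-1}$ contains only even-index Legendre polynomials. Its $L_1$-coefficient is therefore $0$, giving $F_k(1)=0$.

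An alternative route to the third item: for even $k$, $F_k$ is odd, and by \eqref{eq:F_derivative_chain} it is an antiderivative of $F_{k-1}$. One would then argue $F_k(1)=\int_0^1F_{k-1}$, using $F_k(0)=0$, and try to show this integral vanishes. That argument would need an additional symmetry fact that is not available, so the Legendre-coefficient computation above is the cleaner path.

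The only step requiring care is checking that the special rule for $j=1$ is consistent with the general formula modulo $L_0$. The general formula would give $\tfrac13(L_2-L_0)$ for $j=1$, and the rule simply drops $L_0$. This is precisely what guarantees both the zero-mean property and that $F_k$ stays in the span, so there is no real obstacle.
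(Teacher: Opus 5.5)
Your proof is correct. Items 1 and 2 follow the paper's argument: $\Lop$ flips the parity of the Legendre indices starting from $F_1=\tfrac13L_2$, and zero mean follows from membership in $\operatorname{span}\{L_1,L_2,\dots\}$. You also usefully make explicit why $\Lop$ never generates $L_0$, which the paper takes for granted.

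For item 3 your route differs from the paper's. The paper uses the antiderivative identity $F_k'=F_{k-1}$ to write $F_k(1)=F_k(0)+\int_0^1F_{k-1}$. It then kills $F_k(0)$ by oddness of $F_k$, and kills $\int_0^1F_{k-1}=\tfrac12\int_{-1}^1F_{k-1}=0$ by evenness and zero mean of $F_{k-1}$. You instead evaluate the defining formula of $\Lop$ at $t=1$, obtain $\Lop(p)(1)=\tfrac13p_1$, and conclude from the index parity of $F_{k-1}$.

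Your version is purely algebraic and slightly more informative: for every $k$ it gives $F_k(1)$ as one third of the $L_1$-coefficient of $F_{k-1}$. The paper's version reuses the calculus structure (antiderivative chain, oddness/evenness) that drives the zero analysis in Lemma~\ref{lem:F_zeros}.

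One correction to your side remark: the ``alternative route'' is not blocked. The fact it needs, $\int_0^1F_{k-1}=0$, follows at once from your first two items applied to $F_{k-1}$, which is even with zero mean over $[-1,1]$. That argument is exactly the paper's proof.
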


\begin{proof}
To show the first result,
we proceed by induction.  For $k=1$, $F_1 = \frac13 L_2$ and $L_2$ is even,
so $F_1(-t)=F_1(t)=(-1)^{2}F_1(t)$.  Assume $F_k$ has parity $(-1)^{k+1}$.
From~\eqref{eq:F_def_ref}, $\Lop(L_j)$ is a linear combination of
$L_{j+1}$ and $L_{j-1}$; since $L_{j\pm1}(-t)=(-1)^{j\pm1}L_{j\pm1}(t)
= -(-1)^j L_{j\pm1}(t)$, the operator $\Lop$ maps a polynomial of parity
$(-1)^j$ to one of parity $-(-1)^j$, i.e.\ $\Lop$ flips parity.
Consequently $F_{k+1}=\Lop(F_k)$ has parity $-(-1)^{k+1}=(-1)^{(k+1)+1}$,
completing the induction.

The second result is clear, since for every $j\ge1$, $\int_{-1}^{1}L_j=0$, and $F_k$ is a linear combination of $\{L_1,L_2,...\}$.

The last result is proved as follows.
By \eqref{eq:F_derivative_chain},  integrating from $0$ to $1$ gives
\[
F_k(1)-F_k(0)=\int_0^1 F_{k-1}(t)\,\dd t.
\]
If $k$ is even, $F_k$ is odd, so $F_k(0)=0$.
 Because
$\int_{-1}^1 F_{k-1}=0$ (zero integral) and $F_{k-1}$ is even,
we have $2\int_0^1 F_{k-1}=0$, whence $\int_0^1 F_{k-1}=0$.
Thus $F_k(1)=0$.  
\end{proof}

We now show that the ratios  $c_j/c_{k+1}$ ($j\in\mathcal{U}_k\setminus\{k+1\}$) for even $k$ and the ratios
$c_j/c_{k+1}$ ($j\in\mathcal{U}_k\setminus\{0, k+1\}$) for odd $k$ 
coincide with those of the Legendre coefficients of $F_k = \Lop^{k}(L_1)$.

\begin{lemma}\label{lem:bspline_coeff}
Let $k\geq 2$ and $\mu=k-1$ (smoothest B-splines).  Write
\begin{equation*}
F_k(t)=\Lop^{k}(L_1)(t)=\sum_{j\in\mathcal{U}_k\setminus\{0\}} \gamma_j L_j(t).
\label{eq:Fk_expansion}
\end{equation*}
Then the leading-order coefficients $\{c_j\}_{j\in\mathcal{U}_k}$  for even $k$ and $\{c_j\}_{j\in\mathcal{U}_k\setminus\{0\}}$ for odd $k$  of
the error satisfy
\begin{equation*}
\frac{c_j}{c_{k+1}} = \frac{\gamma_j}{\gamma_{k+1}},
\qquad j\in\mathcal{U}_k\setminus\{0,k+1\},
\label{eq:bspline_ratio}
\end{equation*}
up to $O(h^{k+2})$ remainders. 
\end{lemma}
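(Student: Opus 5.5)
The plan is to show that $F_k$ satisfies exactly the same system of leading-order constraints as the error polynomial, and then to invoke uniqueness from the closure theorem. For $\mu=k-1$ we have $N_{\text{Gal}}=0$ (computed just before Definition~\ref{def:F_operator}). Hence, by Theorem~\ref{thm:closure}, the nodal constraints \eqref{eq: superconvergence constraints} alone determine the ratios $c_j/c_{k+1}$ for $j\in\mathcal{U}_k\setminus\{0,k+1\}$ (for even $k$, $0\notin\mathcal{U}_k$ anyway). These constraints are
\[
\sum_{j\in\mathcal{U}_k} c_jL_j^{(s)}(1)=0,\qquad 0\le s\le k-1,\ s\equiv k \pmod 2 .
\]
It therefore suffices to prove two things: first, that the coefficient vector $\{\gamma_j\}$ of $F_k$ is supported in $\mathcal{U}_k\setminus\{0\}$; second, that it satisfies the same equations, namely $F_k^{(s)}(1)=0$ for all $0\le s\le k-1$ with $s\equiv k\pmod 2$. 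Once both hold, $\{\gamma_j\}$ lies in the one-dimensional solution set modulo $c_0$, and so the two ratio vectors agree. The $O(h^{k+2})$ remainders are inherited directly from Theorem~\ref{thm:closure}.

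\textbf{Support and nondegeneracy.} By Lemma~\ref{lem:F_basic}, $F_k$ has parity $(-1)^{k+1}$, so only indices $j\equiv k+1\pmod 2$ appear. Since $\Lop$ raises the maximal degree by exactly one, $\deg F_k=k+1$, and a short induction on the top coefficient shows $\gamma_{k+1}\neq 0$. This ensures the ratios $\gamma_j/\gamma_{k+1}$ are well defined. By construction $F_k$ lies in $\operatorname{span}\{L_1,L_2,\dots\}$, so $\gamma_0=0$, and $F_k$ has zero mean (Lemma~\ref{lem:F_basic}).

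\textbf{Nodal conditions.} By \eqref{eq:F_derivative_chain}, $F_k^{(s)}=F_{k-s}$ for $s<k$. If $s\equiv k\pmod 2$, then $k-s$ is even. For $k-s\ge 2$, Lemma~\ref{lem:F_basic} gives $F_{k-s}(1)=0$. The case $k-s=0$ would require $s=k>\mu$, which is excluded. So every nodal equation holds exactly for $F_k$.

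\textbf{Main obstacle.} The key point is that the uniqueness in Theorem~\ref{thm:closure} is stated for the combined system. Here I must make sure that with $N_{\text{Gal}}=0$ the set $V_G$ is all of $\mathbb{R}^{|\mathcal{U}_k|}$, so that the nodal equations alone have rank $|\mathcal{U}_k|-1$ (even $k$) or $|\mathcal{U}_k|-2$ (odd $k$). I would check this directly: the count $N_{\text{Nod}}=\lfloor (k-1)/2\rfloor+1=k/2$ for even $k$, and $\lfloor k/2\rfloor=(k-1)/2$ for odd $k$, matches these ranks. The independence itself is supplied by the proof of Theorem~\ref{thm:closure}, via the invertible Gram matrix $A$ in the case $\mathcal{M}_k$-terms reduce to the $\lambda_r$ parametrization. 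With that rank established, $\{c_j\}$ and $\{\gamma_j\}$ (restricted to $j\neq0$) are proportional, and the lemma follows.
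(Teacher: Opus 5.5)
Your proof is correct, and its skeleton matches the paper's. You show that the Legendre coefficients of $F_k$ satisfy the homogeneous nodal system \eqref{eq: superconvergence constraints}, using $F_k^{(s)}=F_{k-s}$ and $F_{k-s}(1)=0$ for even $k-s\ge2$, and then conclude proportionality from uniqueness.

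The difference is where uniqueness comes from. You import it from Theorem~\ref{thm:closure}. With $N_{\text{Gal}}=0$, $V_G$ is the whole coefficient space, so the rank statement rests on the invertibility of the Gram matrix $A$ in that proof.

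The paper instead proves uniqueness directly. Because $L_j^{(s)}(1)=0$ for $j<s$, and $j\equiv k+1$, $s\equiv k \pmod 2$, the equation for a given $s$ involves only $c_j$ with $j\ge s+1$. The one new unknown $c_{s+1}$ enters with the nonzero coefficient $L_{s+1}^{(s)}(1)$. Processing $s=k-2,k-4,\dots$ is therefore a back-substitution that fixes each ratio $c_{s+1}/c_{k+1}$ in turn.

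This triangular argument is self-contained and needs only these two elementary facts about $L_j^{(s)}(1)$. It disposes of what you flagged as the ``main obstacle'' in one line. Matching the number of equations to the required rank does not by itself show the rank is attained; the triangular structure does. It also avoids leaning on the closure theorem's more delicate Gram-matrix step.

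Your route has its own merits. It presents the lemma as a genuine special case of the general closure theorem. You also explicitly verify $\gamma_{k+1}\neq0$ and the parity-determined support of $F_k$ in $\mathcal{U}_k\setminus\{0\}$, which the paper leaves implicit in the statement.
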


\begin{proof}
For the nodal superconvergence constraints \eqref{eq: superconvergence constraints},write $s = k-2, k-4, \dots$ in
decreasing order (these are all values with $s\equiv k\pmod{2}$ and
$0\le s\le k-2$; note $s=k-1$ does not satisfy the parity condition).
At $s=k-2$, only $j\ge k-2$ contribute to~\eqref{eq: superconvergence constraints}. Thus, 
\[
c_{k-1}L_{k-1}^{(k-2)}(1) + c_{k+1}L_{k+1}^{(k-2)}(1)=0,
\]
which determines $c_{k-1}/c_{k+1}$ uniquely.
At $s=k-4$, indices $j=k-3,k-1,k+1$ contribute.  Using the already
determined ratio $c_{k-1}/c_{k+1}$, this equation fixes
$c_{k-3}/c_{k+1}$.  Proceeding downward through $s=k-6,k-8,\dots$,
each step introduces one new coefficient and determines its ratio to
$c_{k+1}$ uniquely.  

	Now consider $F_k$.  By the antiderivative property~\eqref{eq:F_derivative_chain},
	$F_k^{(s)}(t)=F_{k-s}(t)$.  Lemma~\ref{lem:F_basic} gives
	$F_{k-s}(1)=0$ whenever $k-s\ge2$ is even.
For $s\equiv k\pmod{2}$ and $s\le k-2$, the quantity $k-s$ is even
and at least~$2$, so $F_k^{(s)}(1)=F_{k-s}(1)=0$.  Hence the
Legendre coefficients $\{\gamma_j\}$ satisfy the same homogeneous
system~\eqref{eq: superconvergence constraints} as $\{c_j\}$. 
Thus,  $\{c_j\}$ and $\{\gamma_j\}$ are
proportional.  
\end{proof}

The polynomials $F_k$ possess a rich zero structure (i.e. the superconvergence points of B-splines) as follows.

\begin{lemma}\label{lem:F_zeros}
For $k\ge 2$, the zeros of $F_k(t)$ satisfy:
\begin{itemize}
  \item If $k$ is even, $F_k(t)$  has simple zeros at
    $t=-1,0,1$ and no other zeros in $[-1,1]$. If $k$ is odd, $F_k(t)$  has exactly two simple
    symmetric zeros $\pm a_k\in(-1,1)$ and no other zeros in $[-1,1]$.
  \item $\displaystyle\lim_{k\to\infty} a_k = \frac12$.  More precisely,
    \[
    a_k-\frac12 = \frac{1}{\pi 2^{k+1}} - \frac{1}{\pi 4^{k+1}}
    + \frac{4}{\pi 6^{k+1}} - \frac{17}{6\pi 8^{k+1}}
    + O\!\left(\frac{1}{10^{k+1}}\right).
    \]
\end{itemize}
\end{lemma}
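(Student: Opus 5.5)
The plan is to identify $F_k$ with a rescaled Bernoulli polynomial and then read off both claims from classical properties of Bernoulli polynomials and their Fourier series. By Definition~\ref{def:F_operator} and \eqref{eq:F_derivative_chain}, $F_k$ is uniquely determined by $F_k'=F_{k-1}$, $\int_{-1}^1F_k=0$ (Lemma~\ref{lem:F_basic}), and $F_0:=L_1=t$. The Bernoulli polynomials satisfy $B_n'=nB_{n-1}$ and $\int_0^1B_n=0$. Put $x=(t+1)/2$. I will check by induction that
\[
F_k(t)=\frac{2^{k+1}}{(k+1)!}\,B_{k+1}\Bigl(\frac{t+1}{2}\Bigr),
\]
with base case $2B_1(x)=2x-1=t$. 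The derivative, the zero-mean condition and the value at $k=0$ all match, so the identity holds for every $k$. The zeros of $F_k$ in $[-1,1]$ are therefore the images under $t=2x-1$ of the zeros of $B_{k+1}$ in $[0,1]$.

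For the first bullet I use the classical zero structure of Bernoulli polynomials. A proof can be given directly by induction with Rolle's theorem, using only the parity from Lemma~\ref{lem:F_basic}, $F_k(\pm1)=0$ for even $k$, and zero mean; otherwise it can be cited. For even $k$, $n=k+1\ge3$ is odd, and $B_n$ vanishes at $0,\tfrac12,1$ with no other zeros in $[0,1]$, all three zeros simple. This gives the simple zeros $t=-1,0,1$.

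The Rolle argument I have in mind runs as follows. Suppose $F_{k}$ with $k$ even had an extra zero in $(0,1)$. Then $F_k'=F_{k-1}$ would have at least two zeros in $(0,1)$, and by evenness at least four in $(-1,1)$, contradicting the odd case at $k-1$. For odd $k$, $F_k$ is even with zero mean, so it has at least one sign change in $(0,1)$. If it had two or more zeros in $(0,1)$, then $F_{k+1}$, which is odd and vanishes at $0$ and $1$, would have at least $4$ zeros in $[0,1]$ by Rolle; since $F_{k+1}'=F_k$, this would contradict the even case at $k+1$. Run in this interleaved order, the induction bounds the zero counts at each step. Simplicity follows because a double zero would propagate to a common zero of $F_k$ and $F_{k-1}$, which the same counting rules out.

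For the asymptotics, take $k$ odd and $n=k+1=2m$, and use the Fourier expansion
\[
B_{2m}(x)=(-1)^{m+1}\frac{2(2m)!}{(2\pi)^{2m}}\sum_{\ell\ge1}\frac{\cos(2\pi\ell x)}{\ell^{2m}}.
\]
The zero near $x=\tfrac14$ corresponds to $-a_k$, via $a_k=1-2x$. Write $x=\tfrac14+\varepsilon$ and set $\theta=2\pi\varepsilon$. The terms with $\ell=1,2,3,4,\dots$ become $-\sin\theta$, $-2^{-2m}\cos 2\theta$, $3^{-2m}\sin3\theta$, $4^{-2m}\cos4\theta$, and so on. The zero equation is then
\[
\sin\theta=-2^{-2m}\cos2\theta+3^{-2m}\sin3\theta+4^{-2m}\cos4\theta+\cdots.
\]
Solving by fixed-point iteration gives $\theta=-2^{-2m}+O(4^{-2m})$ at leading order. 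Hence $a_k-\tfrac12=-2\varepsilon=-\theta/\pi=\frac{1}{\pi2^{k+1}}+\cdots$, which reproduces the first term of the claimed expansion.

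The main obstacle is the bookkeeping for the subsequent terms. The expansion is in the small parameters $q=2^{-(k+1)}$, $3^{-(k+1)}$, $4^{-(k+1)}=q^2$, and so on, and the claimed result contains only powers of $q$, namely $4^{-(k+1)}$, $6^{-(k+1)}$ and $8^{-(k+1)}$. I must therefore check carefully which cross terms survive. The $3^{-(k+1)}\sin3\theta$ contribution is of size $3^{-(k+1)}q$, that is $6^{-(k+1)}$, so it does feed into the coefficient $4/\pi$; the same happens for the corresponding $\ell=4$ terms. I will expand $\sin$, $\cos$ and $\arcsin$ to the required order, collect the terms of order $q^2$, $6^{-(k+1)}$ and $q^3$, and bound everything else by $O(10^{-(k+1)})$. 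This last bound needs care, because terms like $5^{-(k+1)}q$ are exactly of that order. The convergence $a_k\to\tfrac12$ is immediate from the leading term.
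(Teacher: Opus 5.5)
Your route is correct if you \emph{cite} the classical theorem on the zeros of Bernoulli polynomials in $[0,1]$. For the second bullet it is the paper's argument written in Bernoulli language, not a new one. Under $z=2x$, $y=(t+1)/2$, the paper's generating function $H(t,x)=e^{xt}/\sinh x-1/x$ becomes exactly $\frac{2}{z}\bigl(\frac{ze^{zy}}{e^z-1}-1\bigr)=2\sum_{n\ge1}B_n(y)z^{n-1}/n!$. So your identity $F_k=\frac{2^{k+1}}{(k+1)!}B_{k+1}\bigl(\frac{t+1}{2}\bigr)$ is equivalent to it, and the partial-fraction series the paper derives is the classical Fourier series of $B_{2m}$. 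Your zero equation with $\theta=-\pi\delta_k$ is \eqref{eq:delta_eq}. The paper obtains all of this from the ODE $\partial_tH=xH+1$ and is self-contained, whereas you import standard formulas. Your normalization also gets the sign right: $F_k(t)=\frac{2(-1)^{(k+1)/2}}{\pi^{k+1}}\sum_{n\ge1}\frac{(-1)^{n+1}\cos(n\pi t)}{n^{k+1}}$ (check $F_1(0)=-\tfrac16$); the paper's display has a harmless extra minus sign. The bookkeeping you postpone does close. With $\phi=-\theta$ and $q_j=j^{-(k+1)}$,
\[
\phi=q_2+\tfrac16\phi^3-2q_2\phi^2+3q_3\phi-q_4+q_6-q_8+O(q_{10})
=q_2-q_4+4q_6-\tfrac{17}{6}q_8+O(q_{10}),
\]
using $q_2^2=q_4$, $q_2q_3=q_6$, $q_2^3=q_8$. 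Odd-index terms $q_\ell\sin(\ell\phi)$ always carry a factor $\phi=O(q_2)$. So $\ell=3$ contributes $3q_6$, and the next one, $5q_5q_2=5q_{10}$, lies inside the remainder, exactly as you anticipated.

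The part that does not work is the self-contained Rolle induction you offer as an alternative to citation, in the odd case. To rule out extra zeros of $F_k$ ($k$ odd) you appeal to the zero count of its antiderivative $F_{k+1}$. But Rolle's theorem only transfers zeros from $F_{k+1}$ to $F_{k+1}'=F_k$. Knowing that $F_{k+1}$ vanishes in $[0,1]$ only at $0$ and $1$ gives no upper bound on the zeros of $F_k$. For example, three sign changes of $F_k$ in $(0,1)$ are compatible with it: $F_{k+1}$ may rise, dip without crossing zero, rise again and return to $0$.

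The scheme is also circular. Your even step at $k+1$ uses the odd case at $k$, and your odd step at $k$ uses the even case at $k+1$, so no ordering starts the induction for odd $k\ge3$. The fix is the paper's step, which derives the odd case at $k$ from the even case at $k-1$:
\begin{itemize}
\item Since $F_{k-1}$ vanishes in $[0,1]$ only at $0$ and $1$, $F_k'=F_{k-1}$ has constant sign on $(0,1)$.
\item Hence $F_k$ is strictly monotone on $[0,1]$ and has at most one zero there.
\item Evenness and zero mean force exactly one zero $a_k\in(0,1)$, and $F_k(0),F_k(1)\ne0$.
\item The zero is simple since $F_k'(a_k)=F_{k-1}(a_k)\ne0$.
\end{itemize}
With this replacement and your (correct) even step, the induction closes from $F_1=L_2/3$.
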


\begin{proof}
We first prove the zero distribution by induction on $k$. For $k = 2$, the distribution of zeros $-1,0,1$ is clear. Assume the first statement of lemma holds for some $\tilde k \ge 2$.
If $\tilde k$ is even,  $F_{\tilde k}$ is odd with zeros exactly at $-1, 0, 1$, and does not change sign in $(0,1)$. Since $F_{\tilde k+1}' = F_{\tilde k}$, $F_{\tilde k+1}$ is monotonic on $[0,1]$. From Lemma~\ref{lem:F_basic}, the integral condition 
\begin{eqnarray*}
	\int_{-1}^1 F_{\tilde k+1}(x)\,\mathrm{d}x = 2\int_0^1 F_{\tilde k+1}(x)\,\mathrm{d}x = 0,
\end{eqnarray*}
forces $F_{\tilde k+1}$ to change sign in $(0,1)$. By monotonicity, there is exactly one zero in $(0,1)$, symmetric to one in $(-1,0)$. If $F_{\tilde k+1}(1) = 0$, then by monotonicity $F_{\tilde k+1}$ does not change sign on $(0,1)$, contradicting the integral condition. Thus $F_{\tilde k+1}(1) \neq 0$, and similarly $F_{\tilde k+1}(-1) \neq 0$.
If $\tilde k$ is odd,  $F_{\tilde k}$ is even with two symmetric zeros $\pm a_{\tilde k} \in (-1,1)$, and does not change sign in $(0,a_{\tilde k})$ and $(a_{\tilde k},1)$. By 
\begin{align*}
		F_{ \tilde k+1} = \Lop(F_{\tilde k}) =&  \Lop(C_{\tilde k+1}L_{\tilde k+1}+C_{\tilde k-1}L_{\tilde k-1}+...+C_2L_2)\\
		=& \frac{C_{\tilde k+1}}{2\tilde k+3}(L_{\tilde k+2}-L_{\tilde k})+\frac{C_{\tilde k-1}}{2\tilde k-1}(L_{\tilde k}-L_{\tilde k-2})+...+\frac{C_2}{5}(L_3-L_1),
\end{align*}
we deduce that $-1,0,1$ are zeros of  $F_{\tilde k+1} $, which has no other zeros (by monotonicity). This completes the induction. 

We prove $a_k\to\tfrac12$ and derive the precise asymptotics.
Consider the generating function
\begin{equation*}\label{eq:gen_func}
H(t,x) = \sum_{k=0}^\infty F_k(t)\,x^k,
\end{equation*}
with $F_0(t):=L_1(t)$.  By the antiderivative property~\eqref{eq:F_derivative_chain},
$F_k' = F_{k-1}$ for $k\ge1$, so
\begin{equation*}
\frac{\partial H}{\partial t}(t,x) = \sum_{k=1}^\infty F_{k-1}(t)\,x^k = x\,H(t,x) + 1.
\end{equation*}
Solving this ODE with $\int_{-1}^1 H(t,x)\,\mathrm{d}t=0$ (since
$\int_{-1}^1F_k=0$ for $k\ge1$) yields
\begin{equation*}
H(t,x) = \frac{e^{xt}}{\sinh x} - \frac{1}{x}.
\end{equation*}
Fixing $t$ and viewing $H(t,x)$ as a function of $x$, the term $e^{xt}/\sinh x$ has
simple poles at $x=n\pi i$ ($n\in\mathbb Z\setminus\{0\}$).  The residue is
\[
\lim_{x\to n\pi i}(x-n\pi i)\frac{e^{xt}}{\sinh x}
= \frac{e^{n\pi i t}}{\cosh(n\pi i)} = (-1)^n e^{n\pi i t},
\]
hence $H(t,x)$ admits the partial fraction expansion
\begin{equation*}
H(t,x) = \sum_{n\neq 0} \frac{(-1)^n e^{n\pi i t}}{x - n\pi i}.
	\end{equation*}
Expanding $1/(x-n\pi i) = -\sum_{k=0}^\infty x^{k}/(n\pi i)^{k+1}$ for
$|x|<\pi$ and extracting the coefficient of $x^k$ gives
\begin{align*}
F_k(t) &= \sum_{n\neq 0} (-1)^{n+1}\frac{e^{n\pi i t}}{(n\pi i)^{k+1}} \nonumber\\
&=- \frac{2(-1)^{(k+1)/2}}{\pi^{k+1}}
   \sum_{n=1}^{\infty}\frac{(-1)^{n+1}\cos(n\pi t)}{n^{k+1}},
\qquad k\text{ odd}.
\label{eq:Fourier}
\end{align*}
The series converges absolutely and uniformly on $[-1,1]$.
For odd~$k$, the positive zero $a_k$ satisfies $F_k(a_k)=0$, so
\begin{equation*}
\sum_{n=1}^{\infty}\frac{(-1)^{n+1}\cos(n\pi a_k)}{n^{k+1}}=0.
\end{equation*}
Set $\delta_k:=a_k-\tfrac12$.  Substituting $a_k=\tfrac12+\delta_k$ gives
\begin{equation}\label{eq:delta_eq}
\sin(\pi\delta_k)=	\sum_{m=1}^\infty \frac{(-1)^{m+1}\cos(2m\pi\delta_k)}{(2m)^{k+1}}
+ \sum_{m=2}^\infty \frac{(-1)^{m}\sin((2m-1)\pi\delta_k)}{(2m-1)^{k+1}}.
	\end{equation}
	A crude bound $|\sin(\pi\delta_k)|\le\sum_{n=2}^\infty n^{-(k+1)}
	\le 2/2^{k+1}$ and $|\sin\theta|\ge\frac{2}{\pi}|\theta|$ yield
	$|\delta_k|\le\pi/2^{k+1}$, confirming $\delta_k\to0$ and
	$a_k\to\tfrac12$.
	
	We now expand~\eqref{eq:delta_eq} systematically.  Write
	$\Theta:=\pi\delta_k$.  The bound above gives $\Theta=O(2^{-(k+1)})$.
	For  small $\Theta$, each $\cos$ term contributes
	$1+O(\Theta^2)$ and each $\sin$ term contributes
	$(2m-1)\Theta+O(\Theta^3)$.  Hence the $\sin$ terms enter at order
	\[
	\frac{(2m-1)\Theta}{(2m-1)^{k+1}}
	= \frac{\Theta}{(2m-1)^k}
	= O\!\left(\frac{1}{2^{k+1}(2m-1)^k}\right)
	= O\!\left(\frac{1}{\bigl(2\,(2m-1)\bigr)^{k+1}}\right),
	\]
	i.e.\ they contribute to even denominators $2(2m-1)$
	rather than to the odd denominators $(2m-1)^{k+1}$ themselves.
	Consequently, the expansion of $\Theta$ involves only even-indexed
	terms:
	\begin{equation*}\label{eq: Theta}
		\Theta = \sum_{m=1}^{\infty}\frac{A_{2m}}{(2m)^{k+1}}.
	\end{equation*}
	The coefficients $A_{2m}$ are determined by substituting the Taylor
	expansions
	\[
	\sin\Theta = \sum_{i=0}^{\infty} \dfrac{(-1)^i\Theta^{2i+1}}{(2i+1)!},\quad
	\cos(2m\Theta)=\sum_{i=0}^{\infty} \dfrac{(-1)^i(2m\Theta)^{2i}}{(2i)!},\quad
	\sin((2m-1)\Theta)=\sum_{i=0}^{\infty} \dfrac{(-1)^i \big((2m-1)\Theta\big)^{2i+1}}{(2i+1)!},
	\]
	into \eqref{eq:delta_eq}.  Comparing the coefficients of $1/(2m)^{k+1}$ gives
	\begin{equation*}
		A_2 = 1,\,\, A_4 = -1,\,\, A_6 = 4,\,\, A_8 = -\frac{17}{6},\,\, \dots.
	\end{equation*}
	This completes the proof.
\end{proof}

For the smoothest B-spline case $\mu=k-1$, Lemma~\ref{lem:bspline_coeff}
shows that the leading-order error polynomial on the reference interval
$[-1,1]$ is proportional to $F_k$.  By $F_k^{(s)} = F_{k-s}$, the
leading term of $e^{(s)}$ is proportional to $F_{k-s}$, so the
superconvergence points of $e^{(s)}$ are precisely the zeros of
$F_{k-s}$.  These zeros are characterized by Lemma~\ref{lem:F_zeros}.

\begin{theorem}\label{thm:bspline_points}
Let $\mu=k-1$ (smoothest B-splines) and $k\ge2$.  For an interior element
mapped to $[-1,1]$, the superconvergence points of $e^{(s)}(x)$
($0\le s\le k$ when the point lies inside the element, $0\le s\le \mu$
when it is a node) are the zeros of $F_{k-s}$ (see Lemma~\ref{lem:F_zeros} for their structure):
\begin{itemize}
  \item If $k-s$ is even, $F_{k-s}$ has simple zeros at $-1,0,1$,
    so $e^{(s)}(x)$ exhibits superconvergence at the element endpoints
    and the midpoint.
  \item If $k-s$ is odd, $F_{k-s}$ has exactly two simple symmetric
    zeros $\pm a_{k-s}\in(-1,1)$, so $e^{(s)}(x)$ exhibits superconvergence
    at two symmetric interior points.  These points approach $\pm1/2$
    as $k-s\to\infty$ with the asymptotics given in
    Lemma~\ref{lem:F_zeros}.
\end{itemize}
There are no other superconvergence points in $[-1,1]$.
\end{theorem}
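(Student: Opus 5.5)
The plan is to assemble three earlier results: Lemma~\ref{lem:bspline_coeff}, the derivative chain~\eqref{eq:F_derivative_chain}, and the zero description in Lemma~\ref{lem:F_zeros}.

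\textbf{Step 1: the error on an element is, to leading order, a multiple of $F_k$.} Work on an interior element $I_i$ inside the locally uniform region, mapped to $[-1,1]$. By Lemmas~\ref{lemma:midpoint} and~\ref{lemma:const_mode}, the pullback of $e$ equals $\sum_{j\in\mathcal{U}_k}c_jL_j(t)$ up to $O(h^{k+2})$. Lemma~\ref{lem:bspline_coeff} then gives $c_j=(c_{k+1}/\gamma_{k+1})\gamma_j+O(h^{k+2})$ for $j\ge1$. For odd $k$, the $c_0$ term is a constant. It does not affect $e^{(s)}$ for $s\ge1$. For $s=0$ one must treat $c_0$ separately: invoke Lemma~\ref{lem:c0_super}, or simply exclude it and work with the higher-order bound available under the locality hypothesis. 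Hence
\[
e(t)=\kappa F_k(t)+c_0\,[k\ \text{odd}]+O(h^{k+2}),\qquad \kappa:=c_{k+1}/\gamma_{k+1}=O(h^{k+1}).
\]
One checks that $\gamma_{k+1}\neq0$: it is the leading coefficient of $F_k$, which is a positive product of the factors $1/(2j+1)$.

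\textbf{Step 2: differentiate.} Use $dt/dx=2/h$ together with the bound $R_i^{(s)}=O(h^{k+2-s})$ from \eqref{eq:legendre_expansion}. This gives
\[
e^{(s)}(x)=(2/h)^s\kappa F_{k-s}(t)+O(h^{k+2-s}),
\]
where $F_0:=L_1$ when $s=k$, consistent with $F_1'=L_1$. The leading term is $O(h^{k+1-s})$ with a nonzero coefficient whenever $u^{(k+1)}\ne0$ locally. Therefore $e^{(s)}$ is $O(h^{k+2-s})$ at $t$ if and only if $F_{k-s}(t)=0$. This identifies the superconvergence points with the zero set of $F_{k-s}$, including the claim that there are no other such points. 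At the nodes, the restriction $s\le\mu$ is required only because $e^{(s)}$ must be single-valued there.

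\textbf{Step 3: read off the zero structure.} Apply Lemma~\ref{lem:F_zeros} to $F_{k-s}$ to get the two bullet cases. There are small cases that lemma does not cover, since it assumes $k\ge2$:
\begin{itemize}
\item $k-s=0$: $F_0=L_1$ vanishes only at $t=0$, so only the midpoint is a superconvergence point. This agrees with Theorem~\ref{thm:local_super}, because endpoints are excluded for $s=k>\mu$.
\item $k-s=1$: $F_1=L_2/3$ has zeros $\pm1/\sqrt3$. These fit the ``two symmetric points'' description.
\end{itemize}

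\textbf{Main obstacle: the sharpness claim ``no other points''.} It needs $\kappa\neq0$, i.e.\ $c_{k+1}$ genuinely of order $h^{k+1}$. I would obtain this from Case~2 of Lemma~\ref{lemma:const_mode}, which expresses $c_{k+1}=\beta_{k+1}u^{(k+1)}(x_{i+1/2})h^{k+1}/(k+1)!$. One then verifies $\beta_{k+1}\neq0$. Since $Q_k\in\mathcal P_k$, the leading coefficient of $t^{k+1}-Q_k$ is $1$, and projecting onto $L_{k+1}$ isolates it, so $\beta_{k+1}=2^{-(k+1)}\cdot 2^{k+1}(k+1)!^2/(2k+2)!\ne0$. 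With this in hand, every point outside the zero set has error exactly of order $h^{k+1-s}$ when $u^{(k+1)}(x_0)\ne0$.
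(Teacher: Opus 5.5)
Your proposal is correct and follows the paper's route: Lemma~\ref{lem:bspline_coeff} makes the leading error polynomial proportional to $F_k$, the chain $F_k^{(s)}=F_{k-s}$ transfers this to $e^{(s)}$, and Lemma~\ref{lem:F_zeros} gives the zero structure, with $c_0$ for odd $k$ removed via Lemma~\ref{lem:c0_super} exactly as the paper does in Section~\ref{sec:leading-coeff} (so for $s=0$ you inherit that lemma's stronger uniform-mesh hypothesis, as the paper does). Your extra checks supply details the paper leaves implicit: the cases $k-s=0$ ($F_0=L_1$, midpoint only, since nodes are excluded for $s=k>\mu$) and $k-s=1$ ($F_1=L_2/3$) that Lemma~\ref{lem:F_zeros} does not cover, and $\kappa\neq0$ via $\beta_{k+1}=((k+1)!)^2/(2k+2)!$, which justifies ``no other points'' when $u^{(k+1)}(x_0)\neq0$.
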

\begin{remark}
	Theorem~\ref{thm:bspline_points} also holds for $\mu=k-2$ ($k\ge2$).
	Indeed, when $\mu=k-2$, Lemma~\ref{lem:galerkin_count} still gives
		$N_{\text{Gal}}=0$ (cf.\ the calculation leading to~\eqref{eq,N,G}),
	so all leading-order relations originate from the nodal
	superconvergence constraints.  Theorem~\ref{thm:local_super} applies
	at the nodes for all $s\equiv k\pmod{2}$ with $0\le s\le k-2$,
	which are precisely the same derivative orders as in the
	$\mu=k-1$ case (the additional order $s=k-1$ never enters because
	$k-1\not\equiv k\pmod{2}$).  Consequently the triangular system
	in the proof of Lemma~\ref{lem:bspline_coeff} is unchanged, and the
	leading error polynomial remains proportional to $F_k$.
\end{remark}
	\begin{remark}
		Denote by $e_k(x)$ the numerical solution error using the
		B-spline space of degree $k$.
		Theorem~\ref{thm:bspline_points} shows that $e_{k_1}^{(s_1)}(x)$
		has the same superconvergence points as $e_{k_2}^{(s_2)}(x)$ if
		$k_1-s_1=k_2-s_2$.  If we organize the superconvergence points by
		increasing $k$ in one direction and increasing $s$ in the other,
		then all entries on the same diagonal (same $k-s$) have
		identical superconvergence points, which is consistent with the
		computationally obtained superconvergence point
		table~\cite[Table~1]{Anitescu2015}.  This interesting pattern had
		not been pointed out before, and it is now theoretically confirmed
		by our analysis.
	\end{remark}
% ===========================================================================
% 5. Numerical experiments
% ===========================================================================
\section{Numerical experiments}
\label{sec:numerical}

In this section, we compute the leading-order coefficient ratios
for selected $(k,\mu)$ pairs and provide numerical verification of
our main theoretical results. 
 In Subsection~\ref{sec:leading-coeff},
based on Theorem~\ref{thm:closure},
we compute the ratios $c_j\,/\,c_{k+1}$ appearing in the asymptotic
expansion of the finite element error for selected pairs $(k,\mu)$. In addition,
for pairs  $(k,\mu)$ with $\mu=k-1$, we compute the natural superconvergence points and verify their asymptotic behavior predicted by
Lemma~\ref{lem:F_zeros}.
In Subsection~\ref{sec:num-verify}, the natural superconvergence points
of the numerical solution are confirmed for the spline spaces
$S^{k,\mu}_h$ corresponding to some $(k,\mu)$ pairs.

% ----------------------------------------------------------------------
\subsection{Computation  of leading-order coefficient ratios and natural superconvergence points}
\label{sec:leading-coeff}

Following Section~\ref{sec:asymptotic}, for those elements $I_i$ where $|x_0-x_{i+1/2}|=O(h)$ and $x_0$ 
is the symmetric center of an $O(h|\ln h|)$ mesh, the asymptotic expansion of the numerical solution error is:
\begin{equation*}
	e(x)|_{I_i} = \sum_{j\in\mathcal{U}_k} c_j L_j(t)+O(h^{k+2}),
\end{equation*}
where $\mathcal{U}_k$ is given by \eqref{eq:U_set}.
For even $k$ we have $0\notin\mathcal{U}_k$.
For odd $k$ the index $0$ belongs to $\mathcal{U}_k$, but
Lemma~\ref{lem:c0_super} shows that $c_0$ is actually of higher
order (it vanishes for $\mu\le1$); consequently $c_0$ can be absorbed into
the  remainder of higher order. We focus on the effective leading-order polynomial
\[
\mathcal{L}_{k,\mu}(t)=\sum_{j\in\mathcal{U}_k\setminus\{0\}} c_j L_j(t),
\]
the ratios $c_j\,/\,c_{k+1}$ of which can be determined by Theorem~\ref{thm:closure}. We calculate the leading-order coefficient ratios and the theoretical superconvergence points as follows.

First, we select pairs  $(k,\mu)=: (5,2)$, $(6,3)$, $(7,4)$, $(8,3)$, $(8,5)$. For these pairs, the Galerkin orthogonality constraints together with the nodal
superconvergence constraints form the closure system of Theorem~\ref{thm:closure}.
\begin{itemize}
	\item $(k,\mu)=(5,2)$.
	Here $\mathcal{U}_k\setminus\{0\}=\{2,4,6\}$, $N_{\text{Gal}}=1$,
	$N_{\text{Nod}}=1$.
	By \eqref{eq:alpha_def} and \eqref{eq:alpha_lambda}, the Galerkin
	orthogonality constraints give
	\[
	c_2+c_4+c_6 = \alpha_1 = 2\lambda_0 L_1(1),\qquad
	c_4+c_6 = \alpha_3 = 2\lambda_0 L_3(1).
	\]
	Therefore $c_2=0$.
	By \eqref{eq: superconvergence constraints}, the nodal
	superconvergence constraints give
	\[
	c_2L'_2(1)+c_4L'_4(1)+c_6L'_6(1)=0.
	\]
	Hence the ratios are
	\[
	c_2:c_4:c_6 = 0:-L'_6(1):L'_4(1)=0:-21:10,
	\]
	and the leading-order  asymptotic expansion is $\mathcal{L}_{5,2}= (c_{6}/10)(-21L_4+10L_6)$.
	
	\item $(k,\mu)=(6,3)$.
	Here $\mathcal{U}_k\setminus\{0\}=\{1,3,5,7\}$, $N_{\text{Gal}}=1$,
	$N_{\text{Nod}}=2$.
	Equations~ \eqref{eq:alpha_def} and \eqref{eq:alpha_lambda}  give
	\begin{equation*}
		c_3=\alpha_2-\alpha_4=(L'_2(1)-L'_4(1))2\lambda_1=-14\lambda_1,\qquad c_5+c_7=\alpha_4 = 20\lambda_1.
	\end{equation*}
	The nodal  superconvergence constraints \eqref{eq: superconvergence constraints} with $s=0,2$ give
	\begin{align*}
		c_1+c_3+c_5+c_7 = 0,\qquad
	c_3L''_3(1)+c_5L''_5(1)+c_7L''_7(1)	=15c_3+105c_5+378c_7 = 0.
	\end{align*}
	Solving yields
	\[
	c_1:c_3:c_5:c_7 = 39:91:(-175):45,
	\]
	and the leading-order asymptotic expansion $\mathcal{L}_{6,3}=  (c_{7}/45) (39L_1+91L_3-175L_5+45L_7)$.
	\item $(k,\mu)=(7,4)$.
	Here $\mathcal{U}_k\setminus\{0\}=\{2,4,6,8\}$, $N_{\text{Gal}}=1$,
	$N_{\text{Nod}}=2$.
	Using equations~ \eqref{eq:alpha_def} and \eqref{eq:alpha_lambda} for the Galerkin orthogonality constraints and \eqref{eq: superconvergence constraints} for the nodal  superconvergence constraints yields
	\[
	c_2:c_4:c_6:c_8 = 53:318:(-219):35,
	\]
   	and the corresponding leading-order asymptotic expansion	follows accordingly.
	\item $(k,\mu)=(8,3)$.\
	Here $\mathcal{U}_k\setminus\{0\}=\{1,3,5,7,9\}$, $N_{\text{Gal}}=2$,
	$N_{\text{Nod}}=2$.
	Using equations~ \eqref{eq:alpha_def} and \eqref{eq:alpha_lambda} for the Galerkin orthogonality constraints and \eqref{eq: superconvergence constraints} for the nodal  superconvergence constraints yields
	\[
	c_1:c_3:c_5:c_7:c_9 = 102:238:374:(-1085):371,
	\]
	and the corresponding leading-order asymptotic expansion	follows accordingly.
	\item $(k,\mu)=(8,5)$.\
	Here $\mathcal{U}_k\setminus\{0\}=\{1,3,5,7,9\}$, $N_{\text{Gal}}=1$,
	$N_{\text{Nod}}=3$.
	Using equations~ \eqref{eq:alpha_def} and \eqref{eq:alpha_lambda} for the Galerkin orthogonality constraints and \eqref{eq: superconvergence constraints} for the nodal  superconvergence constraints yields
	\[
		c_1:c_3:c_5:c_7:c_9 = 13260:21794:(-51986):19005:(-2073),
	\]
	and the corresponding leading-order asymptotic expansion
	follows accordingly.
\end{itemize}

\begin{table}[htbp]
	\centering
	\caption{The coefficient ratios of $F_k=\Lop^{k}(L_1)$, and the verification of the zeros  and asymptotic expressions $\tilde{a}_k=1/2+1/(\pi 2^{k+1}) - 1/(\pi 4^{k+1})
		+ 4/(\pi 6^{k+1}) - 17/(6\pi 8^{k+1})$ (for odd $k$, corresponding to the positive zero $a_k$).}
	\label{tab:Fk_coeff}
		\begin{tabular}{c|c|c|c|c}
			$k$ & The coefficient ratios of $F_k=\Lop^{k}(L_1)$ & zeros & $|a_k-\tilde{a}_k|$ & $\dfrac{|a_k-\tilde{a}_k|}{|a_{k+2}-\tilde{a}_{k+2}|}$ \\ \hline
			2 & $c_1:c_3=-1:1$               & $-1,0,1$ & --- & --- \\
			3 & $c_2:c_4=-10:3$   & $\pm0.51932962$ & $8.36\times10^{-5}$ & $72.0$ \\
			4 & $c_1:c_3:c_5=6:(-7):1$      & $-1,0,1$ & --- & --- \\
			5 & $c_2:c_4:c_6=33:(-12):1$    & $\pm0.50491857$ & $1.16\times10^{-6}$ & $88.5$ \\
			6 & $c_1:c_3:c_5:c_7=-1287:1547:(-275):15$
			& $-1,0,1$ & --- & --- \\
			7 & $c_2:c_4:c_6:c_8=-650:246:(-26):1$
			& $\pm0.50123923$ & $1.31\times10^{-8}$ & $96.3$ \\
			8 & $c_1:c_3:c_5:c_7:c_9=6630:(-8024):1496:(-105):3$
			& $-1,0,1$ & --- & --- \\
			9 & $c_2:c_4:c_6:c_8:c_{10}=446386:(-170544):19019:(-952):21$
			& $\pm0.50031057$ & $1.36\times10^{-10}$ & 99.9 \\
		\end{tabular}
\end{table}
Second, we select pairs  $(k,\mu)$ where $2\leq k\leq 9$ and $\mu=k-1$.  By Lemma~\ref{lem:bspline_coeff},  we have
\begin{equation*}
	\mathcal{L}_{k,k-1}(t) = CF_k(t)=C\Lop^{k}(L_1)(t).
\end{equation*}
Then, the leading-order coefficient ratios are clear. We also pay attention to the distribution of their zeros, as predicted by Lemma \ref{lem:F_zeros}.  Table~\ref{tab:Fk_coeff} lists the coefficient ratios of $F_k=\Lop^{k}(L_1)$ for $k=2,\dots,9$ (e.g.\ $c_2:c_4=-10:3$ means $F_3=(c_4/3)(-10L_2+3L_4)$), together with the zeros of $F_k$.  For odd $k$, the positive zero $a_k$ is  approximated by the four-term asymptotic expansion
\[
\tilde a_k=\frac12+\frac{1}{\pi 2^{k+1}}-\frac{1}{\pi 4^{k+1}}+\frac{4}{\pi 6^{k+1}}-\frac{17}{6\pi 8^{k+1}}.
\]
The difference $|a_k-\tilde{a}_k|$ is shown in the fourth column of Table~\ref{tab:Fk_coeff}.  The last column displays the ratio $|a_k-\tilde a_k|/|a_{k+2}-\tilde a_{k+2}|$, which approaches $10^2=100$ as $k$ increases, consistent with the $O(1/10^{\,k+1})$ truncation error.

% ----------------------------------------------------------------------
\subsection{Numerical verification of the natural superconvergence points}
\label{sec:num-verify}

We show how to construct a basis of the discrete space $S^{k,\mu}_h$ on a uniform mesh of $\Omega=[0,1]$.  We use an open knot vector
\[
\underbrace{0,\dots,0}_{k+1} < t_1 < t_2 < \cdots < t_{N-1}
< \underbrace{1,\dots,1}_{k+1},
\]
where the $k+1$-fold repetition at each end forces the splines to
interpolate the boundary values, and the interior knots
$t_i = i/N$ ($i=1,\dots,N-1$) are uniform and distinct.
The standard Cox--de~Boor recurrence \cite{Hughes2005} then generates B-spline basis
functions that are automatically $C^{k-1}$ across every interior knot;
this is the maximally smooth space $S^{k,k-1}_h$.
A lower smoothness $\mu<k-1$ is enforced by raising the multiplicity
of the interior knots: if an interior knot has multiplicity
$m = k-\mu$, the smoothness there drops to $C^{k-m}=C^{\mu}$.
Thus, repeating every interior knot $k-\mu$ times yields the space
$S^{k,\mu}_h$.

\begin{example}
	As a test example, we consider the convection--diffusion--reaction
	equation
	\[
	-u''+u'+u=f\quad\text{on }(0,1),\qquad u(0)=u(1)=0,
	\]
	with the manufactured non-periodic solution
	$u(x)=\sin(\pi x)e^{x}$.
	The discrete solution $u_h\in S^{k,\mu}_h$ is obtained by the standard
	Galerkin method on a uniform mesh of size $h=1/N$.
\end{example}

Let $u_h$ ($h=1/N$) and $u_{h/2}$  be successive
Galerkin solutions.  For a number $m\in[-1,1]$, the pointwise
convergence rate of the $s$-th derivative is defined by
\begin{equation*}\label{eq:order}
r_{s,\Omega}(m) :=
\log\!\left(\frac{
\max_{I'_i\subset\Omega}
\bigl|(u-u_{h})^{(s)}\bigl(\tfrac{1-m}{2}x'_{i-1}+\tfrac{1+m}{2}x'_i\bigr)\bigr|
}{\max_{I''_i\subset\Omega}
\bigl|(u-u_{h/2})^{(s)}\bigl(\tfrac{1-m}{2}x''_{i-1}+\tfrac{1+m}{2}x''_i\bigr)\bigr|}\right)/
\log 2,
\end{equation*}
where $I'_i$ (resp.\ $I''_i$) are the elements of the coarse (resp.\ fine)
mesh.  We evaluate $r_{s,\Omega_{in}}(m)$ on the inner region
$\Omega_{in}=[0.2,0.8]$ and $r_{s,\Omega_{b}}(m)$
on the boundary region $\Omega_{b}=[0,1]\setminus[0.2,0.8]$, where
the local mesh symmetry condition of Theorem~\ref{thm:local_super} fails
(the symmetric region $B_d(x_0)$ cannot be contained in $\Omega$), so the
superconvergence is expected to be destroyed there.
Clearly, the ideal value of $r_{s,\Omega_{in}}(m)$ and $r_{s,\Omega_{b}}(m)$ is $k+1-s$. When the actual value of $r_{s,\Omega_{in}}(m)$ or $r_{s,\Omega_{b}}(m)$ exceeds $k+1-s$,  the point $m$ corresponds to a superconvergence point for each element scaled to [-1,1] in $\Omega_{in}$ or $\Omega_{b}$.

Figure~\ref{fig:inner} displays the convergence rates for the selected pairs
$(k,\mu)=(5,2),(5,4),(8,3),(8,5),(8,7)$.
In every case the observed superconvergence points coincide with those
predicted by the leading-order error polynomials computed in
Subsection~\ref{sec:leading-coeff}:
\begin{itemize}
	\item $(k,\mu)=(5,2)$:\ $r_{0,\Omega_{in}}$ peaks at the zeros of
	$\mathcal{L}_{5,2}=-21L_4+10L_6$,
	and $r_{1,\Omega_{in}}$ at the zeros of $\mathcal{L}_{5,2}'$.
	\item $(k,\mu)=(5,4)$:\ $r_{0,\Omega_{in}}$ peaks at the zeros of $F_5=\Lop^{5}(L_1)$,
	$r_{1,\Omega_{in}}$ at the zeros of $F_5'=F_4$, and $r_{2,\Omega_{in}}$ at the zeros of $F''_5=F_3$.
	\item $(k,\mu)=(8,3)$:\ $r_{0,\Omega_{in}}$ and $r_{1,\Omega_{in}}$ peak at the
	zeros of $\mathcal{L}_{8,3}=102L_1+238L_3+374L_5-1085L_7+371L_9$ and $\mathcal{L}_{8,3}'$, respectively.
	\item $(k,\mu)=(8,5)$:\ $r_{0,\Omega_{in}}$ and $r_{1,\Omega_{in}}$ peak at the
	zeros of $\mathcal{L}_{8,5}=13260L_1+21794L_3-51986L_5+19005L_7-2073L_9$ and $\mathcal{L}_{8,5}'$, respectively.
	\item $(k,\mu)=(8,7)$:\ $r_{0,\Omega_{in}}$ peaks at the zeros of $F_8=\Lop^{8}(L_1)$,
	$r_{1,\Omega_{in}}$ at the zeros of $F_7$, and $r_{3,\Omega_{in}}$
	at the zeros of $F_5$.
\end{itemize}
These results fully confirm the theoretical analysis of
Theorems~\ref{thm:closure} and~\ref{thm:bspline_points}.
In contrast, the boundary-region rates $r_{s,\Omega_{b}}$ stay near
$k+1-s$ without any noticeable local increase for all tested pairs, confirming
that the failure of the local symmetry condition indeed destroys the
superconvergence there.

	\begin{center}
		\includegraphics[width=140pt]{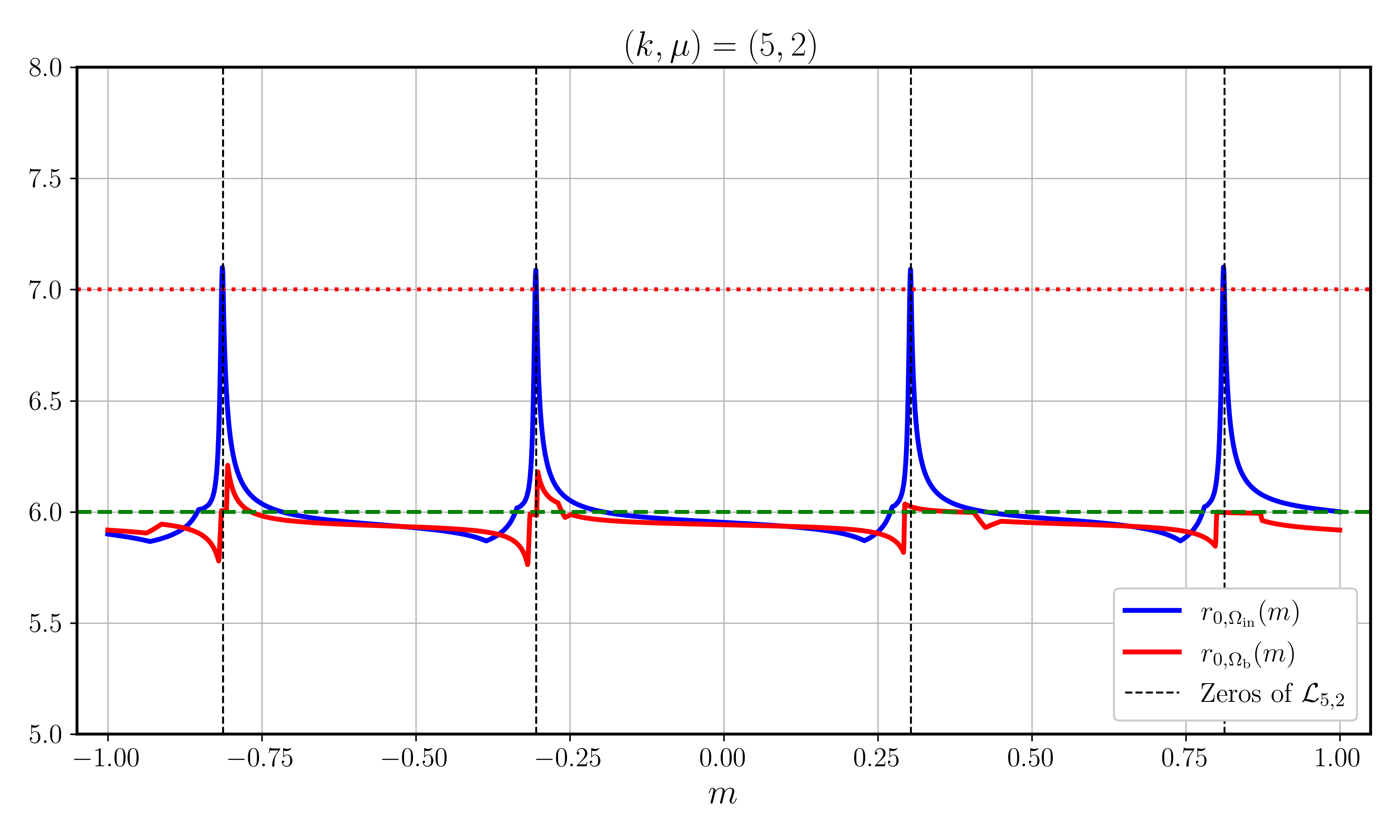}
		\includegraphics[width=140pt]{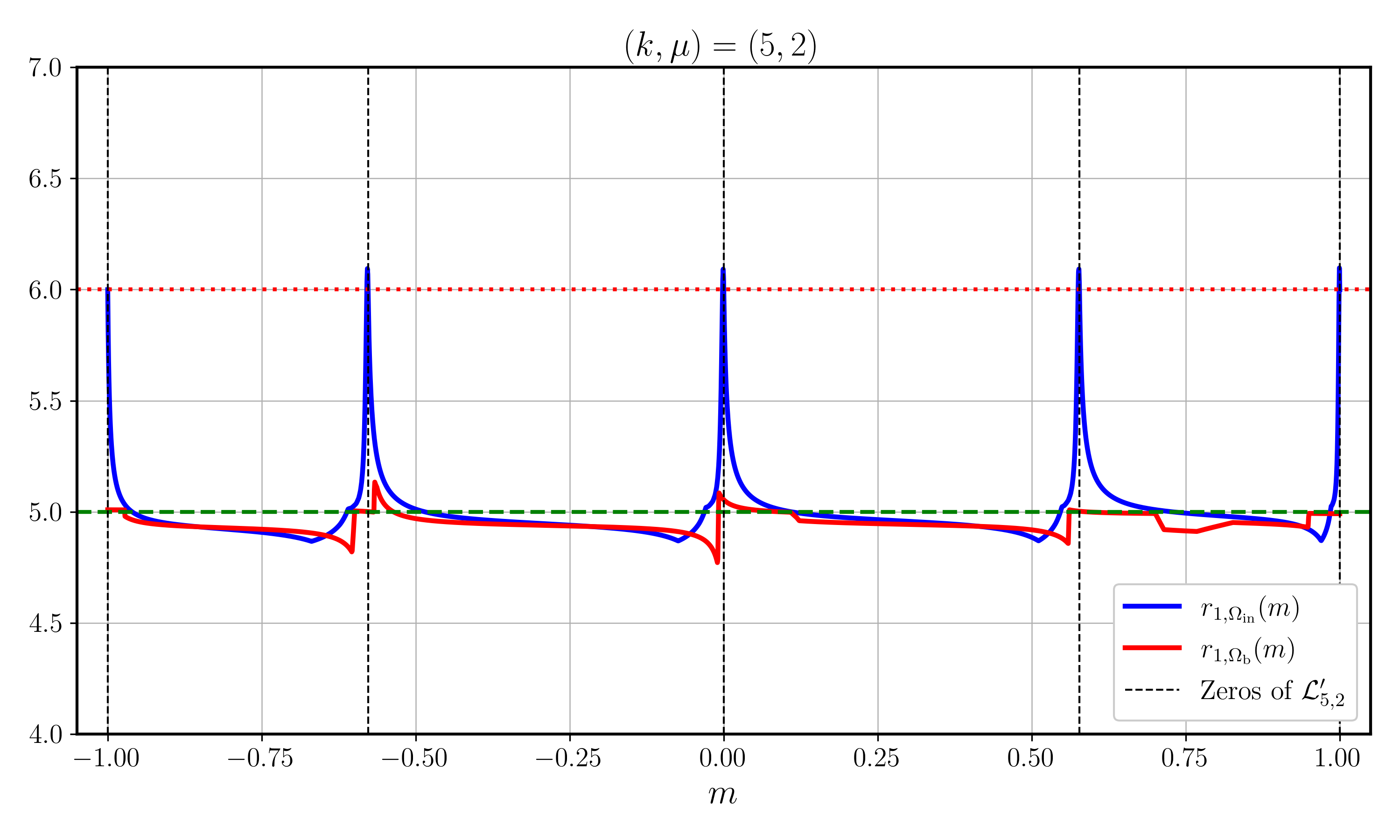}
		\includegraphics[width=140pt]{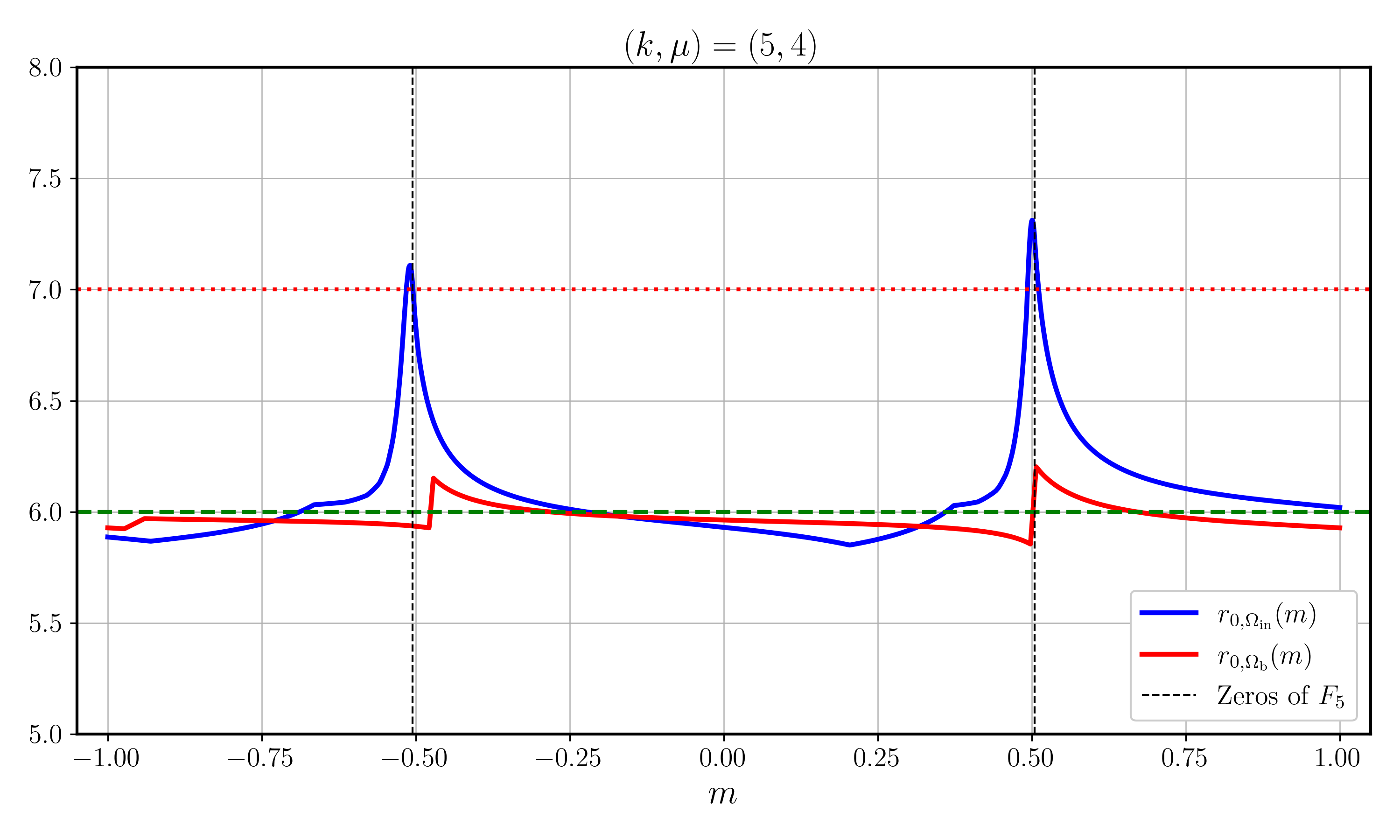}

		\smallskip
		\includegraphics[width=140pt]{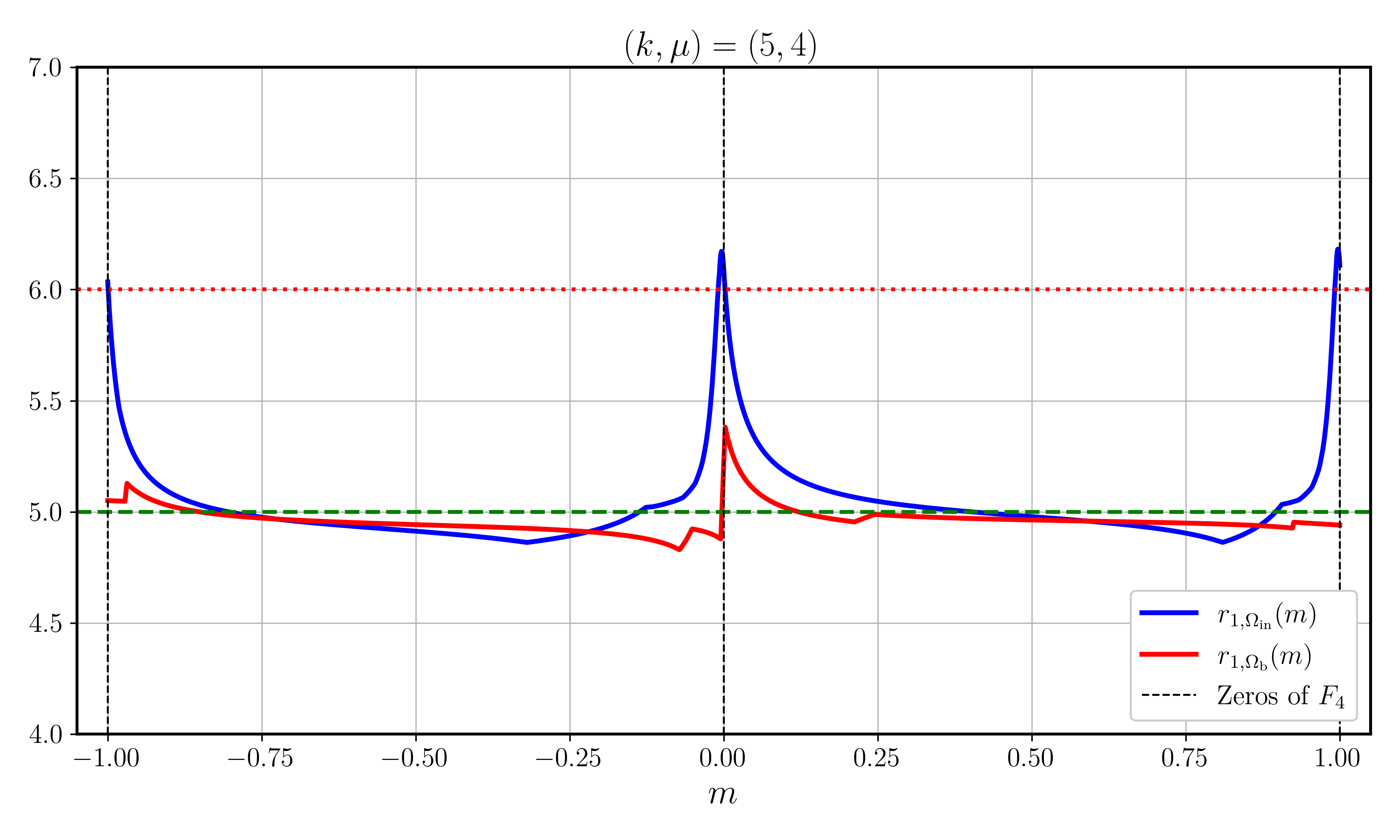}
		\includegraphics[width=140pt]{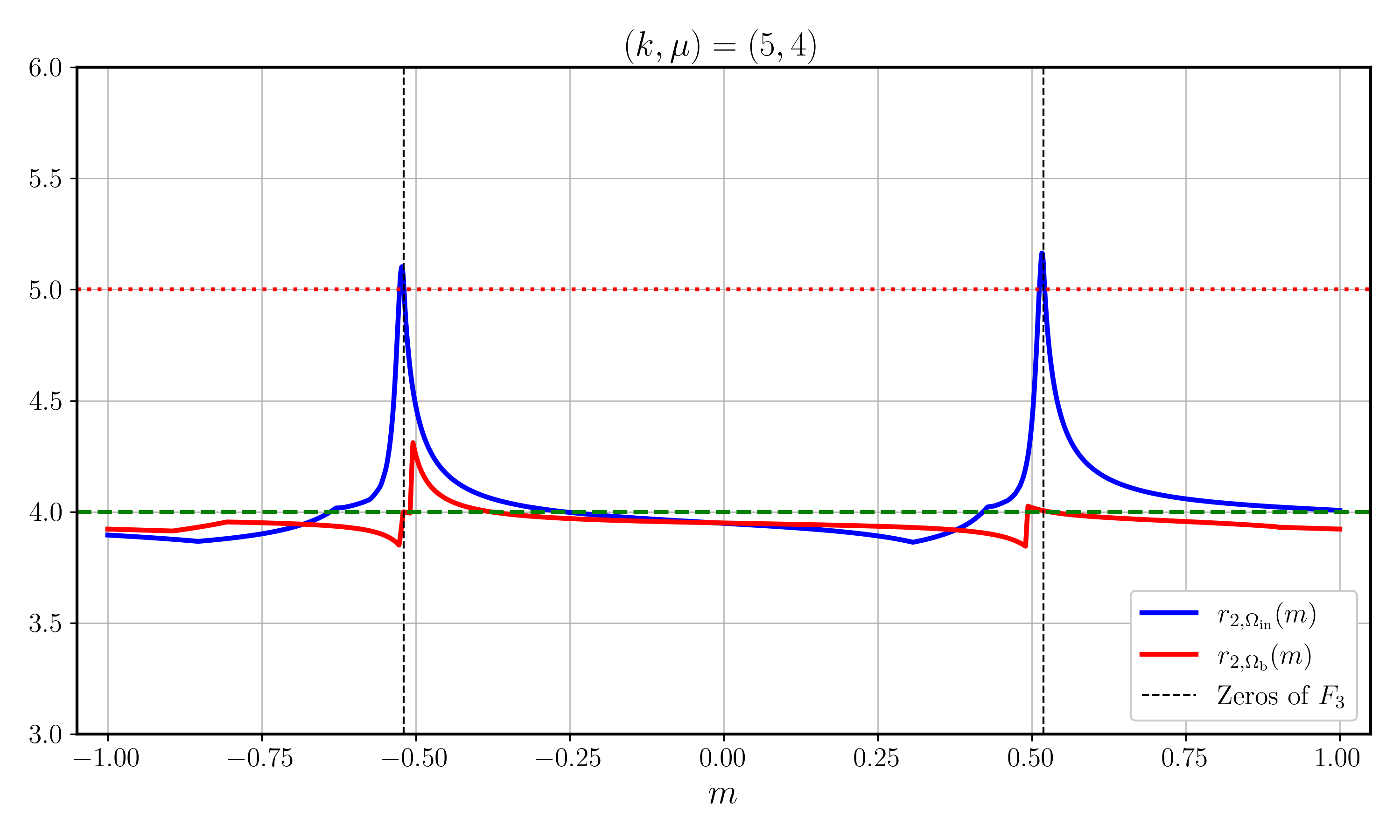}
		\includegraphics[width=140pt]{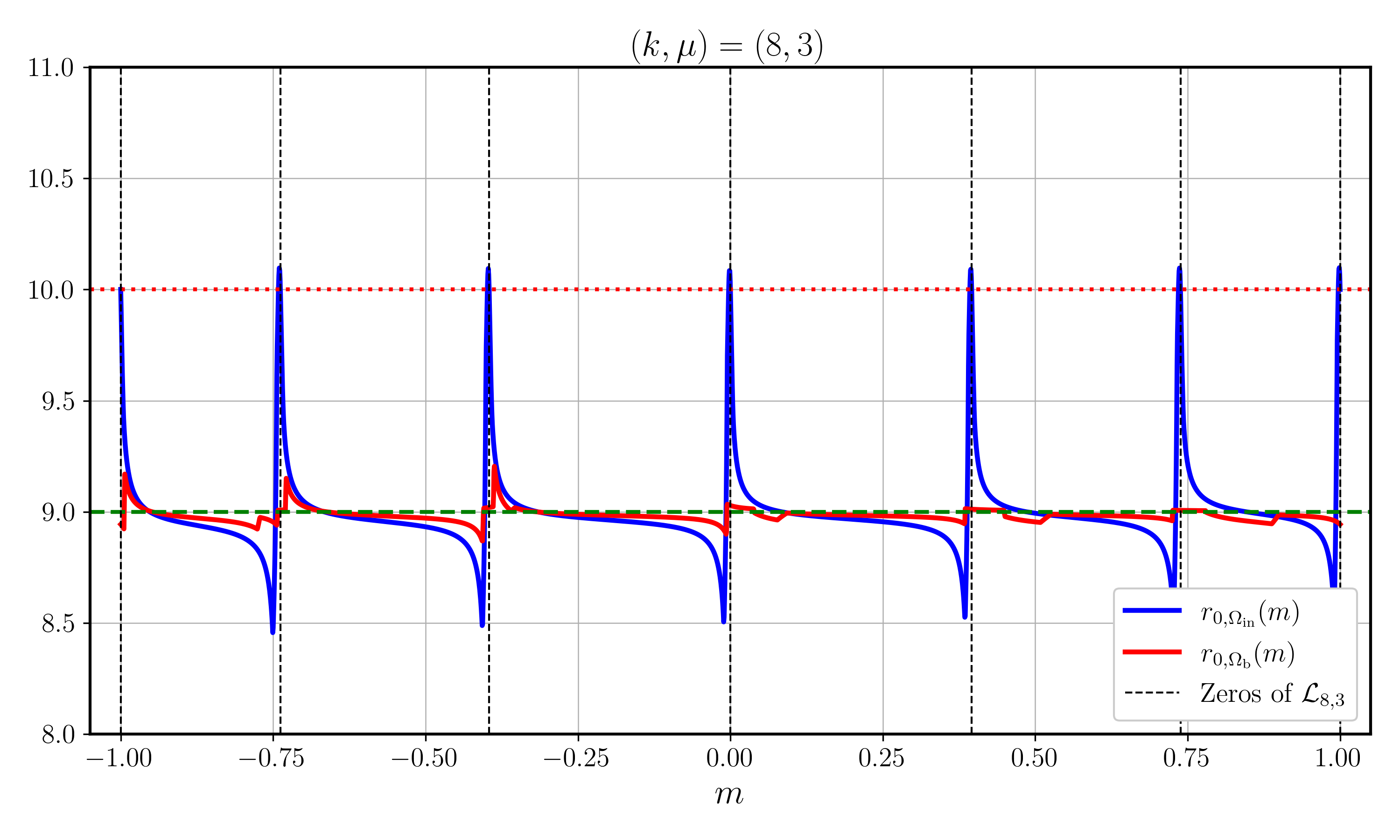}

		\smallskip
		\includegraphics[width=140pt]{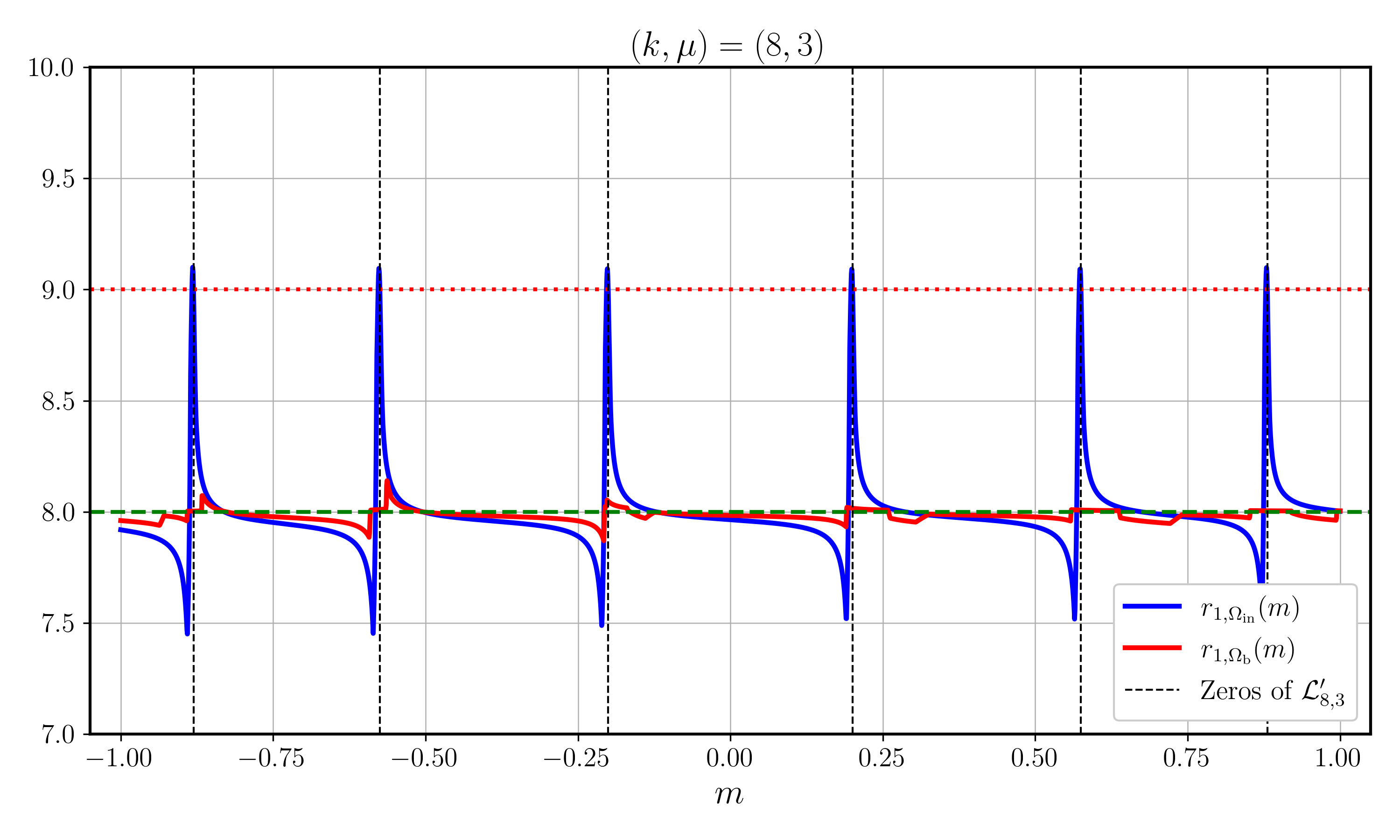}
		\includegraphics[width=140pt]{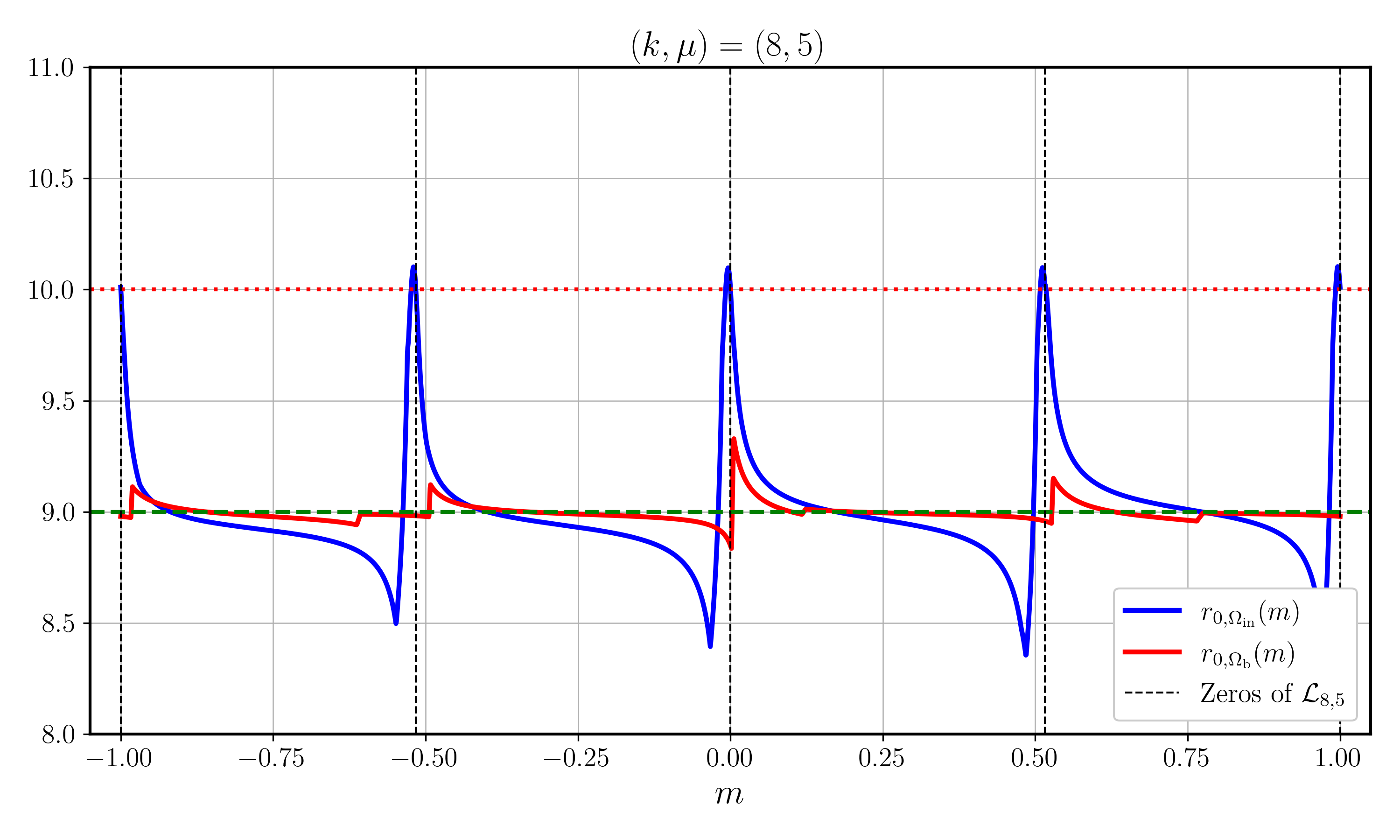}
		\includegraphics[width=140pt]{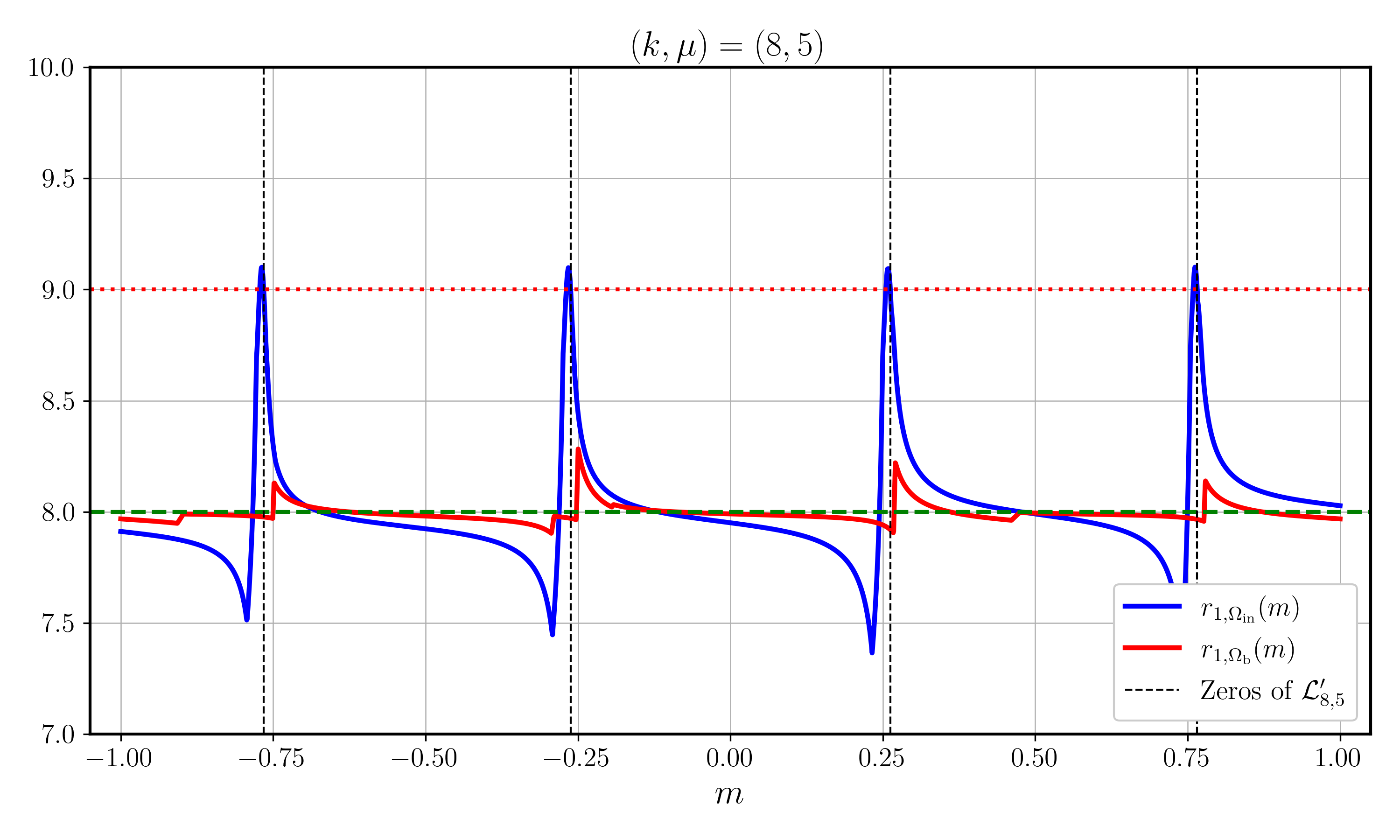}

		\smallskip
		\includegraphics[width=140pt]{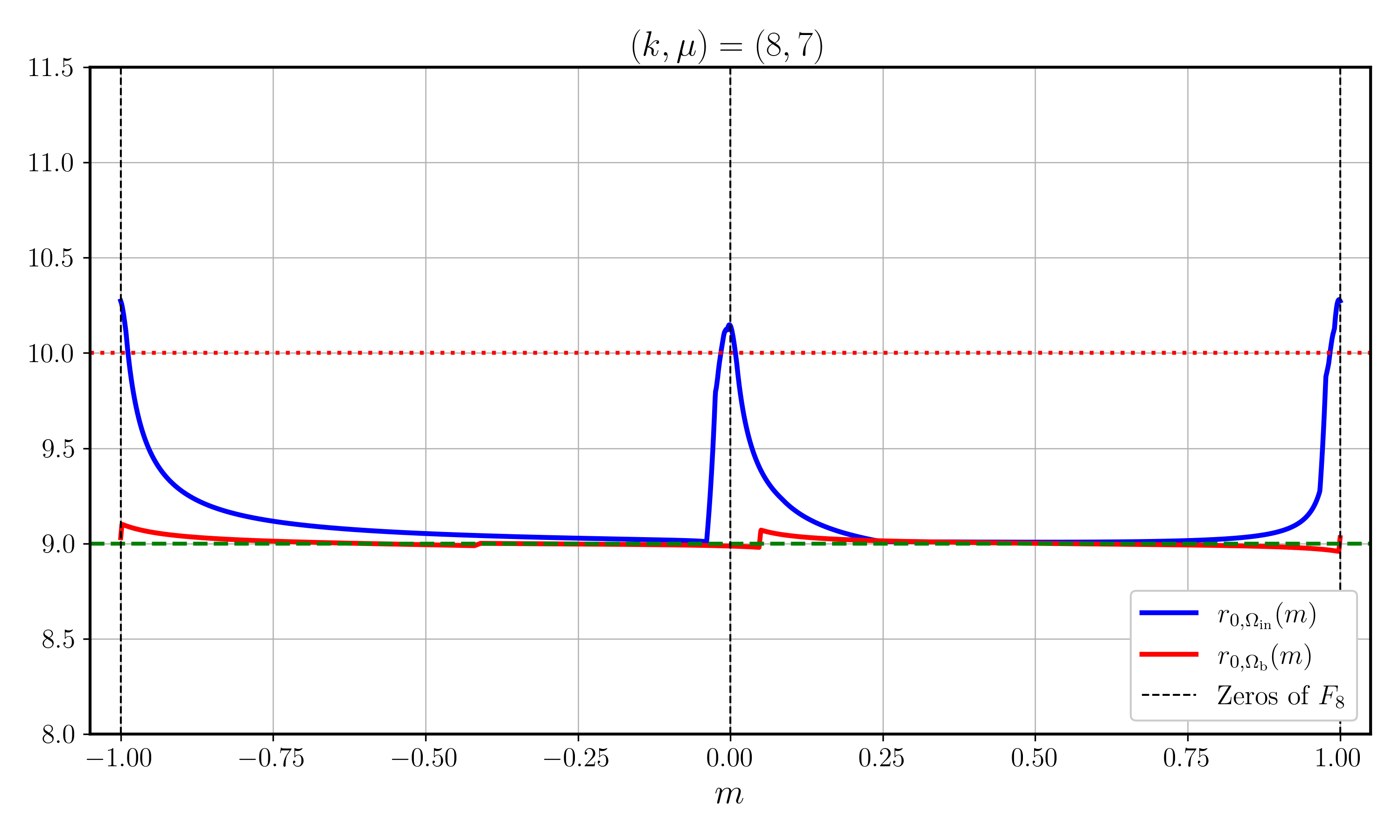}
		\includegraphics[width=140pt]{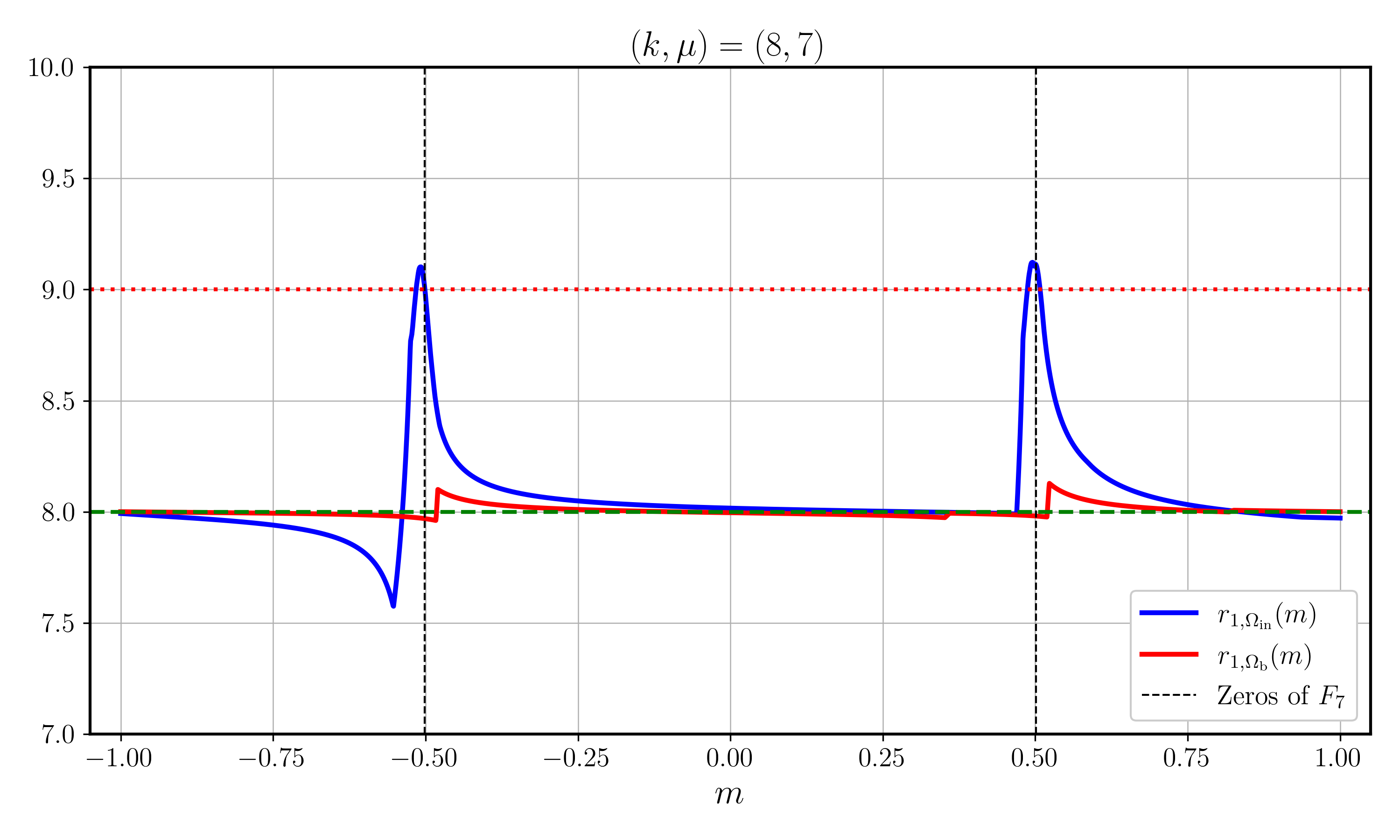}
		\includegraphics[width=140pt]{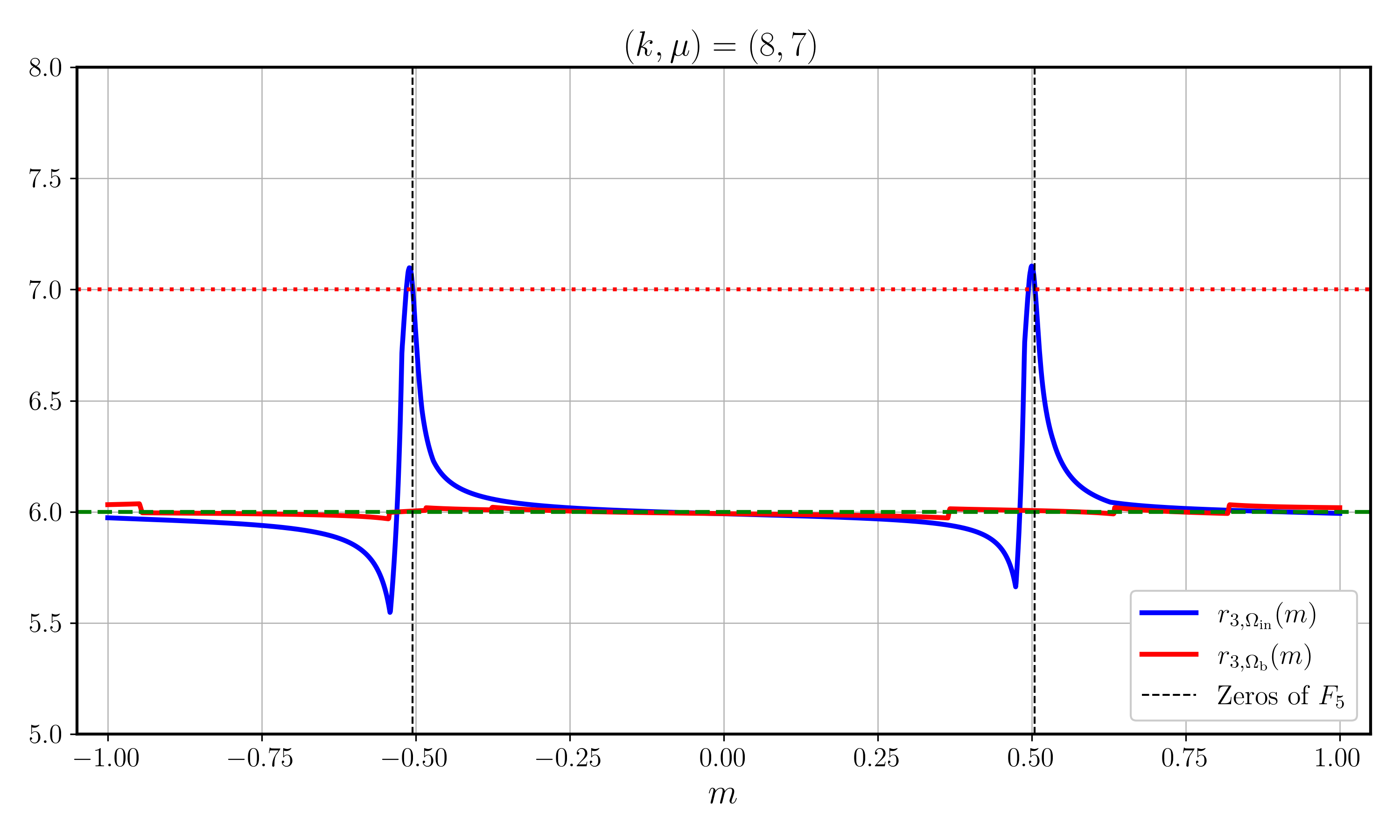}
	\end{center}
	\captionof{figure}{Convergence rates $r_{s,\Omega_{in}}(m)$ and $r_{s,\Omega_{b}}(m)$  for selected pairs $(k,\mu)=(5,2), (5,4), (8,3), (8,5), (8,7)$ and some $s$. The rates are computed from $N=40$ to $80$ for $k=5$ and $(k,\mu)=(8,7)$, and from $N=20$ to $40$ for $(k,\mu)=(8,3), (8,5)$.}
	\label{fig:inner}

% ===========================================================================
% 6. Conclusion
% ===========================================================================
\section{Conclusion}
\label{sec:conclusion}

We have presented a systematic analysis of the natural
superconvergence points for one-dimensional spline finite element approximations of second-order elliptic problems.  A closure theorem was established that
determines the asymptotic expansion of the error by
combining the Galerkin orthogonality with the superconvergence
conditions.  For the maximal smoothness cases $\mu=k-1$
(B-splines) and $\mu=k-2$, the Galerkin orthogonality  conditions vanish and the
asymptotic expansion is obtained through a simple antiderivative
recurrence on Legendre polynomials.
When $k-s$ is even, the $s$-th derivative of the error is superconvergent at the
element endpoints and midpoint; when $k-s$ is odd, it is
superconvergent at two symmetric interior points, and no other superconvergence points exist.  The interior positive superconvergent point  approaches $1/2$ as odd $k-s\to\infty$, and
its asymptotic expansion was derived explicitly.  Numerical
experiments for several $(k,\mu)$ pairs confirm the theoretical predictions.

Several directions for future work are of interest.  An immediate
extension is to tensor-product grids in two and three dimensions.
Although numerical experiments suggest that the superconvergence
points on tensor-product meshes are essentially guided by the
one-dimensional structure, a rigorous proof is non-trivial.  Another natural direction is the analysis of
superconvergence points for higher-order equations, such as
fourth-order problems, where the structure of the leading error
polynomials may differ from the second-order case studied here.
In the longer term, the analysis of natural superconvergence points
on simplicial meshes remains a challenging open problem that would
significantly advance the understanding of superconvergence in
unstructured grids.

% ===========================================================================
% Appendix A: Sharp estimate of the projected Taylor remainder
% ===========================================================================
\appendix
\section{Sharp estimate of the projected Taylor remainder for $s\ge2$}
\label{app:lnh}

In this appendix we prove the sharp estimate of the projected Taylor
remainder $(\Pi_{B_d}r)^{(s)}(x_0)$ for $s\ge2$ stated in
Lemma~\ref{lem:app_sharp} below, with a constant independent of $h$ and
$\ln h$.  Combined with the Taylor decomposition and the parity argument
in the proof of Lemma~\ref{lemma,2}, this estimate yields the sharp
local approximation bound \eqref{lemma, u-pihu} for $k-s$ even.  The
proof is based on local discrete delta functions.

Consider $s\geq 2$. Define the local discrete delta 
$\delta_{B_d}=\delta_{B_d}^{s,x_0} \in\intr S_h^{k-1,\mu-1}(B_d)$ by
\begin{equation}
(\chi,\delta_{B_d})_{L^2(B_d)} = \chi^{(s-1)}(x_0),
\qquad
\forall\,\chi\in\intr S_h^{k-1,\mu-1}(B_d).
\label{eq:app_local_delta}
\end{equation}
Here, $s\le\mu$ when $x_0$ is a mesh node, and $s\le k$ when $x_0$ lies in the interior of an element.

\begin{lemma}\label{lem:decay, local}
Let $s\geq 2$. There exist constants $C,c>0$, independent of $h$ and
  $x_0$, such that
  \begin{equation}
      |\delta_{B_d}(x)|\le C\,h^{-s}\,e^{-c|x-x_0|/h},\qquad
      \forall x\in\Omega.
      \label{eq:delta_decal, local}
  \end{equation}
  For any  prescribed $M>0$,
      \begin{equation}
          m:=\int_{B_d}\delta_{B_d}\,dx = O(h^M),
      \label{eq: estimate m}
  \end{equation}
  provided $C^*$ in $d=C^*h|\ln h|$  is   sufficiently large.
\end{lemma}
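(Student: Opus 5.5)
The plan mirrors Lemma~\ref{lem:decay} for the decay bound and Lemma~\ref{lem:trunc} together with the global fact $\int_\Omega\delta_h=0$ for the moment estimate.

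\textbf{Decay bound.} Let $\{\tilde\psi_j\}_{j\in J_d}$ be the normalized B-splines of $S_h^{k-1,\mu-1}$ whose supports lie in $B_d$. These span $\intr S_h^{k-1,\mu-1}(B_d)$. Write $\delta_{B_d}=\sum_{i\in J_d}d_i^{B}\tilde\psi_i$. Then \eqref{eq:app_local_delta} becomes $M_B\mathbf d^B=\mathbf f$, where $M_B$ is the principal submatrix of the global mass matrix $M$ indexed by $J_d$, and $f_j=\tilde\psi_j^{(s-1)}(x_0)$. A principal submatrix of an SPD banded matrix is again SPD and banded, with the same bandwidth. Its spectrum lies inside that of $M$, so $\kappa(M_B)\le\kappa(M)=O(1)$. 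The Demko--Moss--Smith theorem therefore gives $|(M_B^{-1})_{ij}|\le C\rho^{|i-j|}$ with $\rho$ independent of $h$ and $d$. The rest of the proof of Lemma~\ref{lem:decay} then applies verbatim: $|f_j|\le Ch^{-s+1/2}$ for the $O(1)$ indices whose support contains $x_0$, which yields $|d^B_i|\le Ch^{-s+1/2}e^{-c|x_i-x_0|/h}$ and hence \eqref{eq:delta_decal, local}. Outside $B_d$ the function vanishes, so the bound holds trivially there.

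\textbf{Moment estimate.} The obstacle is that $1\notin\intr S_h^{k-1,\mu-1}(B_d)$, so we cannot test \eqref{eq:app_local_delta} with $\chi=1$ as was done for $\delta_h$. I would test instead with a cutoff $\chi_1\in\intr S_h^{k-1,\mu-1}(B_d)$ satisfying $\chi_1\equiv1$ on the interior of $B_d$ except for $O(1)$ elements next to $\partial B_d$. Concretely, take $\chi_1=\sum_{j\in J_d}\psi_j$, the partition of unity restricted to the interior B-splines; it equals $1$ wherever every contributing B-spline is interior, and $0\le\chi_1\le1$. Since $x_0$ lies at distance $d\gg h$ from $\partial B_d$, $\chi_1\equiv1$ near $x_0$, so $\chi_1^{(s-1)}(x_0)=0$ because $s\ge2$. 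Then
\[
m=\int_{B_d}\delta_{B_d}=(\chi_1,\delta_{B_d})+(1-\chi_1,\delta_{B_d})=0+\int_{B_d}(1-\chi_1)\delta_{B_d}.
\]
The last integrand is supported within $O(h)$ of $\partial B_d$, where $|x-x_0|\ge d-Ch$. The decay bound then gives $|m|\le Ch\cdot h^{-s}e^{-c(C^*|\ln h|-C)}=O(h^{1-s+cC^*})$, which is $O(h^M)$ once $C^*\ge(M+s)/c$.

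The main point to verify carefully is the uniform conditioning of $M_B$, since $B_d$ grows with $|\ln h|$. This follows from the Cauchy interlacing argument above, because all bounds are inherited from the global quasi-uniform mass matrix. The spline-space cutoff in the moment estimate is routine once one has checked that the $\psi_j$ with $j\in J_d$ sum to $1$ away from $O(1)$ boundary elements.
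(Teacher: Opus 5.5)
Your proposal is correct and follows essentially the same route as the paper. For the decay bound, both use uniform Demko--Moss--Smith decay, which holds because the mass matrix of $\intr S_h^{k-1,\mu-1}(B_d)$ is a principal submatrix of the global one. For the moment estimate, both test \eqref{eq:app_local_delta} with a cutoff in $\intr S_h^{k-1,\mu-1}(B_d)$ that is identically $1$ near $x_0$, so its $(s-1)$-th derivative vanishes for $s\ge2$, and then bound the remaining integral by exponential decay.

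The differences are cosmetic. Your cutoff is the explicit sum of the interior B-splines, equal to $1$ except within $O(h)$ of $\partial B_d$, whereas the paper's cutoff switches between $|x-x_0|=d/2$ and $3d/4$. You also spell out the interlacing argument that the paper only asserts.
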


\begin{proof}
  The space $\intr S_h^{k-1,\mu-1}(B_d)$ consists of splines that vanish at the boundary nodes $N_b$.
  The exponential decay property of Lemma~\ref{lem:decay} remains valid
  for $\delta_{B_d}$, because the mass matrix of
  $\intr S_h^{k-1,\mu-1}(B_d)$ is a principal submatrix of the global
  mass matrix and therefore satisfies the Demko--Moss--Smith theorem with
  the same constants.  Consequently,
  \begin{equation*}
      |\delta_{B_d}(x)|\le C h^{-s}e^{-c|x-x_0|/h}.
  \end{equation*}
  Because $1\notin\intr S_h^{k-1,\mu-1}(B_d)$, we cannot directly conclude that
  $\int_{B_d}\delta_{B_d}=0$ from~\eqref{eq:app_local_delta}. To overcome this, we construct a smooth cutoff function
  $\psi_1\in\intr S_h^{k-1,\mu-1}(B_d)$ with $\psi_1=1$ for
  $|x-x_0|\le d/2$, $\psi_1=0$ for $|x-x_0|\ge 3d/4$.  Such a function exists because $B_d$ is a union of
  whole elements of size $C^*h|\ln h|$ and the spline space contains a
  partition of unity on the interior.  Then $\psi_1^{(s-1)}(x_0)=0$ for
  $s\ge2$, so by definition~\eqref{eq:app_local_delta},
  \[
  \int_{B_d}\psi_1\,\delta_{B_d}\,dx = \psi_1^{(s-1)}(x_0)=0.
  \]
  Writing $1=\psi_1+(1-\psi_1)$ and using the exponential decay of
  $\delta_{B_d}$, we obtain
  \begin{equation*}
      m:=\int_{B_d}\delta_{B_d}\,dx
      = \int_{B_d}\psi_1\,\delta_{B_d}\,dx
      + \int_{|x-x_0|\ge d/2}(1-\psi_1)\,\delta_{B_d}\,dx = O(h^M),
  \end{equation*}
  for any prescribed $M>0$, provided $C^*$  is
  sufficiently large.  
\end{proof}

Let $r$ denote the Taylor remainder of $u$ about $x_0$:
\[
r(x)=u(x)-\sum_{j=0}^{k+1}\frac{u^{(j)}(x_0)}{j!}(x-x_0)^j,
\qquad r^{(j)}(x_0)=0\;(j\le k+1).
\]
As shown in the proof of Lemma~\ref{lemma,2}, for $k-s$ even the local
approximation error reduces to
$(u-\Pi_{B_d}u)^{(s)}(x_0)=-(\Pi_{B_d}r)^{(s)}(x_0)$.  Hence the local approximation bound
\eqref{lemma, u-pihu} follows once the estimate
\eqref{eq:app_sharp_bound} below is available.

\begin{lemma}\label{lem:app_sharp}
Let $C^*$ of $d=C^*h|\ln h|$ be sufficiently large and $s\geq 2$. Then
\begin{equation}
|(\Pi_{B_d}r)^{(s)}(x_0)| \le  Ch^{k+2-s},
\label{eq:app_sharp_bound}
\end{equation}
with a constant independent of $h$ and $\ln h$.  Here, $s\le\mu$ when
$x_0$ is a mesh node, and $s\le k$ when $x_0$ lies in the interior of an
element.
\end{lemma}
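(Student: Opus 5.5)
Set $w:=(\Pi_{B_d}r)'\in S_h^{k-1,\mu-1}(B_d)$. The aim is to represent $w^{(s-1)}(x_0)$ through the local discrete delta $\delta_{B_d}$, use the local orthogonality of $\Pi_{B_d}$, and so transfer all derivatives onto $r$, whose size near $x_0$ is controlled by $|x-x_0|^{k+2}$. The exponential decay in Lemma~\ref{lem:decay, local} then integrates the weight $|x-x_0|^{k+2}$ against $e^{-c|x-x_0|/h}$. This gives $h^{k+2}$ times a constant, with no $\ln h$, because $\int|y|^{k+2}e^{-c|y|/h}\,dy\sim h^{k+3}$ regardless of $d$.

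\emph{Step 1 (boundary splitting).} Since $\delta_{B_d}\in\intr S_h^{k-1,\mu-1}(B_d)$ tests only interior-vanishing functions, first split $\Pi_{B_d}r=\phi_b+\phi_0$. Here $\phi_b\in S_h^{k,\mu}(B_d)$ carries the boundary data of $r$ at $N_b$ and is supported in $O(1)$ boundary elements, built from local spline bases. Then $\phi_0\in\intr S_h^{k,\mu}(B_d)$, and $\phi_b^{(s)}(x_0)=0$ because $\operatorname{dist}(x_0,\partial B_d)=d\gg h$. Because $\phi_0'\in\intr S_h^{k-1,\mu-1}(B_d)$, the definition \eqref{eq:app_local_delta} gives $(\Pi_{B_d}r)^{(s)}(x_0)=\phi_0^{(s)}(x_0)=(\phi_0',\delta_{B_d})$.

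\emph{Step 2 (zero-mean correction and orthogonality).} Mimicking Lemma~\ref{lem:proj_diff_sge2}, set $\widetilde\delta:=\delta_{B_d}-m\psi_0$ with $m=O(h^M)$ from \eqref{eq: estimate m}. Its primitive $\chi$ lies in $\intr S_h^{k,\mu}(B_d)$, so $a_{B_d}(r-\Pi_{B_d}r,\chi)=0$. This yields
\[
(\phi_0',\delta_{B_d}) = (r',\widetilde\delta)-(\phi_b',\widetilde\delta)+m(\phi_0',\psi_0).
\]
The last term is $O(h^M)$ times a polynomially bounded factor, since $\|\phi_0'\|$ is bounded by $Cd^{k+2}h^{-1}$ via stability of $\Pi_{B_d}$ in $L^\infty$. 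The middle term involves $\widetilde\delta$ only on the boundary elements, where $|\delta_{B_d}|\le Ch^{-s}e^{-cC^*|\ln h|}=O(h^M)$, while $|\phi_b'|\le Ch^{-1}\max_{N_b}|r^{(j)}|h^j\le Cd^{k+2}h^{-1}$. Both terms are therefore negligible once $C^*$ is large.

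\emph{Step 3 (main term).} For the main term I would not simply integrate $(r',\delta_{B_d})$ against the bound $|r'|\le C|x-x_0|^{k+1}$, since that gives $h^{k+1}\cdot h^{1-s}=h^{k+2-s}$. That bound in fact suffices: $\int_{B_d}|x-x_0|^{k+1}h^{-s}e^{-c|x-x_0|/h}\,dx\le Ch^{-s}h^{k+2}$, and the $m\psi_0$ part is $O(h^M)$. So $|(r',\widetilde\delta)|\le Ch^{k+2-s}|u|_{W^{k+2,\infty}(B_d)}$ with $C$ independent of $\ln h$.

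I expect the main obstacle to be Step 1 together with the justification of the decay bound for $\delta_{B_d}$ up to $\partial B_d$. The boundary corrector $\phi_b$ must be genuinely local and controlled only by the Taylor data of $r$ at $N_b$, which is polynomially large in $d$. Controlling it needs a locally supported spline interpolant of Hermite data, as in the macro-element construction of \cite{Yang2025}, and it is exactly there that the crude $d^{k+2}$ growth must be absorbed by the exponential smallness of $\delta_{B_d}$ near $\partial B_d$.
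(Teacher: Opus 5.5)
Your proposal is correct and follows essentially the same route as the paper. You represent the $s$-th derivative through the local discrete delta $\delta_{B_d}$, pass to the zero-mean correction $\delta_{B_d}-m\psi_0\in W_h$ to use the Ritz orthogonality, and bound $(r',\delta_{B_d})$ via $|r'|\le C|x-x_0|^{k+1}$ against the exponential decay. The only variation is the boundary split: you subtract a local Hermite lift $\phi_b$ from $\Pi_{B_d}r$, whereas the paper splits $\eta=(\Pi_{B_d}r)'$ by B-spline supports into $\eta_0+\eta_b$, which avoids building a lift and needs only a polynomial bound on $\eta_b$ near $\partial B_d$ (your explicit bounds make that step more transparent).
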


\begin{proof}
Set $\eta:=(\Pi_{B_d}r)'\in S_h^{k-1,\mu-1}(B_d)$.  At the boundary
nodes $y\in N_b$, $\Pi_{B_d}r$ matches $r$ in value and derivatives up
to order $\mu$, so $\eta$ does not vanish on $\partial B_d$ and hence
$\eta\notin\intr S_h^{k-1,\mu-1}(B_d)$.  To circumvent this, split
$\eta$ according to the support of the B-spline basis:
\begin{equation*}
\eta = \sum_{i\in I}d_i\tilde\psi_i
+\sum_{i\in J}d_i\tilde\psi_i:= \eta_0+\eta_b,
\end{equation*}
where indices $i\in I$ correspond to basis functions whose supports
lie entirely in the interior of $B_d$, and $i\in J$ the remaining
functions whose supports touch $\partial B_d$.  Thus, 
\begin{equation*}
  \eta_0\in \intr S_h^{k-1,\mu-1}(B_d).
\end{equation*}
Since the
supports of boundary basis functions do not contain $x_0$, 
\begin{equation*}
\eta_b^{(s-1)}(x_0)=0.
\end{equation*}
Consequently, by \eqref{eq:app_local_delta},
\begin{equation}\label{eq: Pi_{B_d}r}
(\Pi_{B_d}r)^{(s)}(x_0)=\eta^{(s-1)}(x_0)=\eta_0^{(s-1)}(x_0)
=(\eta_0,\delta_{B_d}).
\end{equation}
Using exponential decay \eqref{eq:delta_decal, local} of $\delta_{B_d}$, we have
\begin{equation}\label{eq: eta_0, inner}
(\eta_0,\delta_{B_d}) = (\eta,\delta_{B_d})
- (\eta_b,\delta_{B_d})
= (\eta,\delta_{B_d}) + O(h^M).
\end{equation}
The Ritz projection orthogonality in~\eqref{eq:local_projection} gives
\begin{equation}\label{eq: local_projection,eta}
  (\eta,\psi)=(r',\psi), \quad \forall \psi\in W_h:=\{\chi':\chi \in \intr{S}_{h}^{k,\mu}(B_d)\}.
\end{equation}
To exploit this orthogonality, we would like to choose
$\psi=\delta_{B_d}$, but $\delta_{B_d}\notin W_h$ because
$\int_{B_d}\delta_{B_d}=m\neq0$ (cf.~\eqref{eq: estimate m}), while
every $\psi\in W_h$ has zero integral (it is a derivative of a function
vanishing on $\partial B_d$).  We therefore construct a corrected
discrete delta $\widetilde\delta_{B_d}\in W_h$ that differs from
$\delta_{B_d}$ by a negligible $O(h^M)$ perturbation.
Let $\phi_0$ be a B-spline basis function of
$\intr S_h^{k-1,\mu-1}(B_d)$ with $x_0\in\operatorname{supp}(\phi_0)$, and define
\begin{equation*}
\psi_0(x):=\Bigl(\int\phi_0\Bigr)^{-1}\phi_0(x),
\qquad \int\psi_0=1.
\end{equation*}
Using the estimate~\eqref{eq: estimate m} for $m=\int_{B_d}\delta_{B_d}$, set
\begin{equation*}
\widetilde\delta_{B_d}:= \delta_{B_d} - m\psi_0.
\end{equation*}
By construction, $\widetilde\delta_{B_d}\in \intr S_h^{k-1,\mu-1}(B_d)$ and
$\int_{B_d}\widetilde\delta_{B_d}=0$.  Consequently,
$\widetilde\delta_{B_d}\in W_h$ and differs from $\delta_{B_d}$ only by
the $O(h^M)$ term $m\psi_0$.

Now writing
$\delta_{B_d}=\widetilde\delta_{B_d}+m\psi_0$ and using
$\widetilde\delta_{B_d}\in W_h$ in \eqref{eq: local_projection,eta}, it follows from \eqref{eq: Pi_{B_d}r} and \eqref{eq: eta_0, inner} that
\begin{align*} 
(\Pi_{B_d}r)^{(s)}(x_0)=(\eta,\delta_{B_d})+ O(h^M)
&= (\eta,\widetilde\delta_{B_d}) + m(\eta,\psi_0)+ O(h^M) \\
&= (r',\widetilde\delta_{B_d}) + m(\eta,\psi_0) + O(h^M)\\
&= (r',\delta_{B_d}) - m(r',\psi_0) + m(\eta,\psi_0)+ O(h^M)\\
&= (r',\delta_{B_d}) +  O(h^M).
\end{align*}
The terms $m(r',\psi_0)$ and $m(\eta,\psi_0)$ are absorbed into the
$O(h^M)$ remainder because $m=O(h^M)$ by the estimate~\eqref{eq: estimate m}. To estimate  $(r',\delta_{B_d}) $, we write the remainder as
\[
r'(x) = \frac{u^{(k+2)}(t)}{(k+1)!}\,(x-x_0)^{k+1},
\]
where $t$ lies between $x$ and $x_0$. Then
\[
|(r',\delta_{B_d})|
\leq  C|u|_{W^{k+2,\infty}(B_d)}\,
\int_{B_d}|x-x_0|^{k+1}\,|\delta_{B_d}|\,dx.
\]
Fix $C_0>0$ and split
\begin{align*}
  \int_{B_d}|x-x_0|^{k+1}\,|\delta_{B_d}|\,dx &\leq  \int_{|x-x_0|\le C_0 h} |x-x_0|^{k+1}\,|\delta_{B_d}|\,dx
  + \int_{|x-x_0|> C_0 h} |x-x_0|^{k+1}\,|\delta_{B_d}| \,dx\\
  &=: J + K.
\end{align*}
For the tail, using exponential decay \eqref{eq:delta_decal, local} of $\delta_{B_d}$,
\[
K\le C h^{-s}\int_{C_0 h}^\infty r^{k+1} e^{-cr/h}\,dr
     = C h^{k+2-s} \int_{C_0}^\infty y^{k+1} e^{-cy}\,dy
     \le C h^{k+2-s}\int_{0}^\infty s^{k+1} e^{-s}\,ds
     = C\Gamma(k+2) h^{k+2-s},
\]
where we use the gamma function 
\begin{equation*}
  \Gamma(z) =  \int_{0}^\infty y^{z-1} e^{-y}\,dy.
\end{equation*}
For the main part, noticing that
$|x-x_0|\le C_0 h$ gives $|(x-x_0)^{k+1}|\le (C_0 h)^{k+1}$, and
$\int_{|x-x_0|\le C_0 h}|\delta_{B_d}(x)|\,dx\le C h^{-s+1}$, we have
\[
J\le C h^{k+2-s}.
\]
Hence $|(r',\delta_{B_d})| \le C h^{k+2-s}$, and consequently
\[
|(\Pi_{B_d}r)^{(s)}(x_0)|\le Ch^{k+2-s},
\]
with no logarithmic factor, provided $C^*$ of $d=C^*h|\ln h|$ is sufficiently large.
\end{proof}

% ===========================================================================
% Acknowledgements
% ===========================================================================
\section*{Acknowledgements}
This work was partially supported by the National Natural Science Foundation
of China (Grant No.~12501537).

% ===========================================================================
% Bibliography
% ===========================================================================
%% Loading bibliography style file
\bibliographystyle{cas-model2-names}

% Loading bibliography database
\bibliography{cas-refs}

\end{document}